\newtheorem{Th}{Theorem}[section] 
\newtheorem{Prop}{Proposition}[section]   
\newtheorem{Lem}{Lemma}[section]   
\newtheorem{Coro}{Corollary}[section]   
\newtheorem{Def}{Definition}  
\newtheorem{Rem}{Remark}[section]
\newcommand{\R}{\mathbb{R}}
\newcommand{\Z}{\mathbb{Z}}
\newcommand{\C}{\mathbb{C}}
\newcommand{\N}{\mathbb{N}}
\newcommand{\Sz}{\mathcal{S}}
\newcommand{\LL}{\mathcal{L}}
\newcommand{\D}{\mathcal{D}}
\newcommand{\E}{\mathcal{E}}
\newcommand{\U}{\mathcal{U}}
\newcommand{\G}{\mathcal{G}}
\newcommand{\aW}{{\accentset{\,\circ}W}}
\newcommand{\tG}{\widetilde{G}}
\newcommand{\tE}{\widetilde{\mathcal{E}}}
\newcommand{\x}{\langle x\rangle}
\newcommand{\y}{\langle y\rangle}
\newcommand{\p}{{\rm p}}
\newcommand{\id}{\text{\rm id}}
\newcommand{\Id}{\text{\rm Id}}
\newcommand{\tr}{\text{\rm tr}\,}
\newcommand{\sign}{\text{\rm sign}\,}
\newcommand{\real}{\text{\rm Re}\,}
\newcommand{\Ker}{\text{\rm Ker}\,}
\newcommand{\Range}{\text{\rm Range}\,}
\newcommand{\Div }{\mathop{\rm div}\nolimits}
\newcommand{\dt}[1]{\accentset{\mbox{\bfseries .}}{#1}}
\newcommand{\bfdot}{{\mbox{\bfseries .}}}
\newcommand{\dd}{{\rm d}}
\newcommand{\2}{\rangle}
\newcommand{\3}{\vert}
\begin{document}

\title{The viscosity limit of fluid flows with growth/decay conditions at infinity}   
 
\author{R. McOwen and P. Topalov} 

\maketitle

\begin{abstract}  
We prove that the Navier-Stokes equation is well-posed in function spaces on $\R^d$, $d\ge 2$, that contain
vector fields of order $O(|x|^\kappa)$ as $|x|\to\infty$ with $\kappa<1/2$. 
The corresponding solutions depend continuously on the viscosity parameter $\nu\ge 0$ and converge to 
the solutions of the Euler equation as $\nu\to 0+$. Our proof is based on the properties of 
the conjugated heat flow on weighted Sobolev spaces and on a new variant of the Lie-Trotter product formula
for nonlinear semigroups.
\end{abstract}   


\section{Introduction}\label{sec:introduction}
The Navier-Stokes equation describes the time evolution of the velocity field $u(x,t)$, $x\in\R^d$,
$t\ge 0$, $d\ge 2$, of a viscous fluid,
\begin{equation}
\left\{
\begin{array}{l}\label{eq:NS}
u_t+u\cdot\nabla u=\nu \Delta u-\nabla\p,\quad\Div u=0,\\
u|_{t=0}=u_0,
\end{array}
\right.
\end{equation}
where $\p(x,t)$ is the pressure and the constant $\nu>0$ is the viscosity parameter. 
Here, $u\cdot\nabla :=\sum_{j=1}^d u_j \frac{\partial}{\partial x_j}$ is the derivative in the direction of $u$, 
$\Div u$ is the divergence of $u$, and $\Delta:=\sum_{j=1}^d\frac{\partial^2}{\partial x_j^2}$ is the Laplace operator. 

Our space of choice is the weighted Sobolev space $W^{m,p}_\delta$ with weight $\delta\in\R$ and
regularity exponent $m\ge 0$. Depending on the value of the weight $\delta\in\R$, the space controls
the spatial growth/decay of its elements as $|x|\to\infty$.
More specifically, for $1<p<\infty$, a regularity exponent $m\ge 0$, and a weight $\delta\in\R$, one defines the function space
\begin{equation}\label{eq:W-space}
W^{m,p}_\delta:=\big\{f\in\Sz'(\R^d,\R)\,\big|\,\x^{\delta+|\alpha|}\partial^\alpha f\in L^p\big\}
\end{equation}
equipped with the norm
\begin{equation}\label{eq:W-norm}
\|f\|_{W^{m,p}_\delta}:=
\Big(\sum_{|\alpha|\le m}\big\|\x^{\delta+|\alpha|}\partial^\alpha f\big\|_{L^p}^p\Big)^{1/p},
\quad\x:=\sqrt{1+|x|^2},
\end{equation}
where $\Sz'(\R^d,\R)$ denotes the space of real valued tempered distributions on $\R^d$,
$\partial^\alpha:=\partial_1^{\alpha_1}...\partial_d^{\alpha_d}$, where $\alpha\in\Z_{\ge 0}^d$
is a multi-index and $\partial_j$ denotes the weak derivative $\frac{\partial}{\partial x_j}$ for $j=1,...,d$.
By $\Z_{\ge 0}$ we denote the set of non-negative integers.
The space $W^{m,p}_\delta$ is the closure of $C^\infty_c(\R^d)$ with respect to the norm \eqref{eq:W-norm}
where $C^\infty_c(\R^d)$ denotes the space of $C^\infty$ functions on $\R^d$ with compact support.
Note that $\x^\kappa\in W^{m,p}_\delta$ provided $\kappa<-\big(\delta+\frac{d}{p}\big)$.
In addition to $W^{m,p}_\delta$ we will also need its complexification $W^{m,p}_{\delta,\C}:=W^{m,p}_\delta\otimes\C$
consisting of complex valued functions on $\R^d$ whose real and imaginary parts belong to $W^{m,p}_\delta$. 
For simplicity of notation, we will also denote by $W^{m,p}_\delta$ the spaces of tensor fields on $\R^d$
whose components are in $W^{m,p}_\delta$. The norms in all these spaces will be denoted by $\|\cdot\|_{W^{m,p}_\delta}$
(or by $\|\cdot\|_{W^{m,p}_{\delta,\C}}$ for the complex case).
It follows directly from the definition \eqref{eq:W-space} that for any multi-index $\alpha\in\Z_{\ge 0}^d$
with $|\alpha|\le m$ the map $\partial^\alpha : W^{m,p}_\delta\to W^{m-|\alpha|,p}_{\delta+|\alpha|}$
is bounded. For the convenience of the reader, we summarize the basic properties of these spaces
in Appendix \ref{sec:W-spaces}. Note that by Lemma \ref{lem:W-decay} in Appendix \ref{sec:W-spaces}, 
for $m>k+\frac{d}{p}$ and $k\in\Z_{\ge 0}$ we have the bounded embedding $W^{m,p}_\delta\subseteq C^k$
where $C^k$ denotes the space of $k$-times continuously differentiable functions on $\R^d$.
Moreover, for $m>k+\frac{d}{p}$ and $k\in\Z_{\ge 0}$ there exists a constant $C>0$ such that
\begin{equation}\label{eq:decay}
\x^{\delta+\frac{d}{p}+|\alpha|}\big|(\partial^\alpha f)(x)\big|\le C\,\|f\|_{W^{m,p}_\delta}
\end{equation}
for any $|\alpha|\le k$ and $x\in\R^d$. By Lemma \ref{lem:W-multiplication} in Appendix \ref{sec:W-spaces} the space
$W^{m,p}_\delta$ is a Banach algebra for $m>\frac{d}{p}$ and $\delta+\frac{d}{p}\ge 0$. 
For $m\ge 1$, we let $\aW^{m,p}_\delta$ denote the divergence free vector fields in
$W^{m,p}_\delta$, i.e., $\aW^{m,p}_\delta:=\big\{u\in W^{m,p}_\delta\,\big|\,\Div u=0\big\}$.
For a Banach space $X$ we denote by $C^k_b\big([0,\tau),X\big)$, $k\in\Z_{\ge 0}$,
the Banach space of bounded curves with bounded derivatives of order $\le k$.

When $\nu=0$ the equation \eqref{eq:NS} becomes the Euler equation for the flow
of a perfect (inviscid and incompressible) fluid in $\R^d$.
It was proven in \cite[Theorem 1.1]{McOwenTopalov5} that for $m>1+\frac{d}{p}$
the Euler equation is well-posed in $\aW^{m,p}_\delta$ for $-1/2<\delta+\frac{d}{p}<d+1$.
The value $d+1$ of the {\em weight parameter} $\delta+\frac{d}{p}$ appears as a threshold since
for $\delta+\frac{d}{p}\ge 0$ the spaces $\aW^{m,p}_\delta$ are no longer preserved by the solutions of
the Euler equation, because they develop spatial asymptotic expansions as $|x|\to\infty$ of leading order 
$O\big(1/|x|^{d+1}\big)$ (see \cite[Theorem 1.2 and Proposition 1.1]{McOwenTopalov5} for the details
as well as \cite{DobrShaf}). In this way, for initial data $u_0\in\aW^{m,p}_\delta$ with $\delta+\frac{d}{p}\ge d+1$
the evolution of the Euler equation happens in the asymptotic spaces introduced in \cite{McOwenTopalov2,McOwenTopalov3}. 
Recall from \cite[Theorem 1.1]{McOwenTopalov5} that the Navier-Stokes equation is well-posed in the asymptotic spaces.
As a consequence, it was recently proven in \cite[Theorem 1.1 and Proposition 1.1]{Top1} that 
the Navier-Stokes equation is well-posed in $\aW^{m,p}_\delta$ for $0\le\delta+\frac{d}{p}<d+1$ and
that its solutions develop spatial asymptotic expansions as $|x|\to\infty$ in the same way as the solutions of the Euler equation
for $\delta+\frac{d}{p}\ge d+1$. These results lead to the question of the existence of (unbounded) solutions
of the Navier-Stokes equation in $\aW^{m,p}_\delta$ for $-1/2<\delta+\frac{d}{p}<0$ and, most importantly, 
to the question of the existence of a viscosity limit of the solutions of the Navier-Stokes equation
for the maximal range $-1/2<\delta+\frac{d}{p}<d+1$ of well-posedness in $\aW^{m,p}_\delta$ as $\nu\to 0+$. 
These questions are answered affirmatively in the present paper.

The following Theorem shows that the Navier-Stokes equation \eqref{eq:NS} admits
classical solutions of order $O(|x|^\kappa)$ as $|x|\to\infty$ with $\kappa<1/2$.
More specifically, we have 

\begin{Th}\label{th:NS}
Assume that $m>2+\frac{d}{p}$, $-1/2<\delta+\frac{d}{p}<d+1$, and let $m_0:=m+8$. 
Then for any $u_\bullet\in\aW^{m_0,p}_\delta$ there exist $\tau>0$ and an open neighborhood 
$\mathcal{O}(u_\bullet)$ in $\aW^{m_0,p}_\delta$ such that for any $u_0\in\mathcal{O}(u_\bullet)$ 
there exists a unique solution 
\[
u\in C_b\big([0,\tau),\aW^{m,p}_\delta\big)\cap C^1_b\big([0,\tau),\aW^{m-2,p}_\delta\big)
\]
of the Navier-Stokes equation \eqref{eq:NS} such that $|\nabla\p(t,x)|=o(1)$ as 
$|x|\to\infty$ for $t\in[0,\tau)$. The solution depends continuously on the initial data
$u_0\in\mathcal{O}(u_\bullet)$, $u(t)\in\aW^{m_0,p}_\delta$ for $t\in[0,\tau)$, 
and the curve $u : [0,\tau)\to\aW^{m_0,p}_\delta$ is uniformly bounded and right continuous.
\end{Th}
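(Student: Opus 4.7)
The plan is to realize the Navier--Stokes flow on $\aW^{m,p}_\delta$, in the full weight range $-1/2<\delta+\tfrac{d}{p}<d+1$, as a nonlinear Lie--Trotter product of the Euler flow and a heat-type semigroup, both viewed as semigroups on $\aW^{m,p}_\delta$. The Euler factor $\Phi^E_t$ is available from \cite[Theorem~1.1]{McOwenTopalov5}; the two genuinely new ingredients are the construction of the heat factor on spaces that contain polynomially growing divergence-free vector fields, and an abstract nonlinear Lie--Trotter convergence theorem that tolerates a fixed loss of derivatives.

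First I would analyze the \emph{conjugated heat flow} on $\aW^{m,p}_\delta$. Because $\delta+\tfrac{d}{p}$ may be negative, the space admits divergence-free vector fields of order $|x|^\kappa$ with $\kappa<1/2$, so neither the heat kernel convolution nor the Leray projector $P$ acts on $\aW^{m,p}_\delta$ in any immediate way. The key is to replace the formal $e^{t\nu\Delta}\circ P$ by a semigroup $H(t)$ obtained by conjugating $\Delta$ with an appropriate multiplication operator built from powers of $\x$: on the conjugated side the evolution becomes a bounded first-order perturbation of the standard heat equation on an unweighted-like space, and pulling back yields $H(t)$ on $\aW^{m,p}_\delta$ at the cost of a fixed finite loss of derivatives. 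Short-time parabolic estimates for $H(t)$ are inherited from those for the flat heat equation, with constants controlled uniformly enough to survive iteration.

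Second I would feed $H(t)$ and $\Phi^E_t$ into a nonlinear Lie--Trotter product
\[
u_n(t):=\bigl(H(t/n)\circ\Phi^E_{t/n}\bigr)^{n} u_0
\]
and prove convergence in $C_b\bigl([0,\tau);\aW^{m,p}_\delta\bigr)$ for $u_0$ in a sufficiently small neighborhood $\mathcal{O}(u_\bullet)\subset\aW^{m_0,p}_\delta$. The regularity gap $m_0-m=8$ should be exactly the cushion needed to close the commutator estimates in the telescoping sum: each composition loses a bounded number of derivatives to the conjugation perturbation in $H$, to the transport by $\Phi^E$, and to the commutation with the Leray projector appearing in the Euler step. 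Once the limit $u(t)$ is constructed, routine estimates identify it as the unique solution of \eqref{eq:NS} in $C_b\bigl([0,\tau);\aW^{m,p}_\delta\bigr)\cap C^1_b\bigl([0,\tau);\aW^{m-2,p}_\delta\bigr)$; uniqueness, continuous dependence on $u_0$, and uniform boundedness together with right continuity of $u:[0,\tau)\to\aW^{m_0,p}_\delta$ all follow from uniformity of the Lie--Trotter estimates on $\mathcal{O}(u_\bullet)$, the right-continuity (rather than two-sided continuity) at the top regularity reflecting the parabolic smoothing of $H$ at $t=0+$. The pressure decay $|\nabla\p(t,x)|=o(1)$ then follows from $\nabla\p=-P^\perp(u\cdot\nabla u)$, the inclusion $u\cdot\nabla u\in W^{m-1,p}_{2\delta+1}$, and the pointwise bound \eqref{eq:decay}.

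The main obstacle, in my view, is the first step: constructing $H(t)$ with short-time bounds sharp enough to survive iteration $n\to\infty$ and compatible with the nonlinear commutator bookkeeping in the product. The conjugation reduces the problem to a bounded perturbation of the standard heat equation, but turning that into quantitative Lie--Trotter estimates — with precisely the derivative losses that the gap $m_0-m=8$ is designed to absorb — is the analytic heart of the argument. Once $H(t)$ is under control, the abstract nonlinear Lie--Trotter convergence statement should proceed along reasonably familiar lines, at the price of careful weight and derivative bookkeeping.
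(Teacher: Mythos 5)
Your high-level plan — split the Navier--Stokes flow as a Lie--Trotter product of the Euler flow and a heat-type semigroup, with the regularity gap $m_0-m=8$ absorbing the derivative losses needed to run an iterated product — is indeed the architecture of the paper's proof. But your realization of the heat factor misses the crucial idea, and without it the argument does not close as sketched.

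The paper does \emph{not} construct the heat factor by conjugating $\Delta$ with a multiplication operator built from powers of $\x$ to land on an ``unweighted-like'' space. Instead it passes to \emph{Lagrangian coordinates}: writing $\varphi$ for the flow map and $v=u\circ\varphi$, the equation becomes $(\dt\varphi,\dt v)=\E(\varphi,v)+\Xi(\varphi,v)$ where $\E$ is the Euler vector field (which in Lagrangian variables is a genuine $C^\infty$ vector field on an open set of $\D^{m,p}_\delta\times W^{m,p}_\delta$, so its flow $\G_t$ is a smooth ODE flow with no loss of derivatives) and $\Xi(\varphi,v)=(0,\Delta_\varphi v)$ with $\Delta_\varphi=R_\varphi\circ\Delta\circ R_{\varphi^{-1}}$ the \emph{diffeomorphism-conjugated} Laplacian. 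The heat factor is then $\Sz_t(\varphi,v)=(\varphi,S_\varphi(t)v)$ with $S_\varphi(t)=R_\varphi\circ S(t)\circ R_{\varphi^{-1}}$. This conjugation by $R_\varphi$ — not by $\x^a$ — is forced by the Lagrangian structure, because it is the only way the two factors split the generator $\E+\Xi$ exactly. Your multiplication-conjugation would change the function space, but does not produce a semigroup whose generator adds to that of the Euler factor to give the full Navier--Stokes generator; there is no mechanism in your sketch to recombine the two pieces.

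A second gap: the heat semigroup on $W^{m,p}_\gamma$ already exists (the weight is carried with a polynomially growing constant $(1+t)^{|\gamma|/2}$, not conjugated away), and precisely because it is \emph{not} a quasi-contraction, the classical Trotter--Kato hypotheses of Ebin--Marsden and Marsden fail. The analytic heart of the paper is therefore twofold and different from what you identify: (i) Theorem \ref{th:conjugate_Laplace_operator}, which proves that $(t,\varphi)\mapsto S_\varphi(t)$ is analytic with differentials bounded by $C(1+t)^{|\gamma|/2}$, \emph{uniformly in $t\in[0,\infty)$ and locally uniformly in $\varphi$}; and (ii) Theorem \ref{th:product_formula}, a weakened Lie--Trotter theorem in which the quasi-contraction hypothesis is replaced by global exponential bounds on $\dd_xS_t$ and $\dd^2_xS_t$ together with a Finsler-metric construction of an equivalent distance. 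The exponent $8$ arises from the scale $D\subseteq Z\subseteq X$ and $\widetilde X,\widetilde Z$ needed to make $\Sz$ of class $C^2$ along curves on $U_Z\to X$ (via $D_{\Delta^2}$) and $C^1$ along curves on $U_D\to Z$, not from commutator bookkeeping with a Leray projector. Without the Lagrangian reformulation, the conjugated-Laplacian analysis, and the modified Trotter theorem, the sketch as written would not yield a convergent scheme.
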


The solution in Theorem \ref{th:NS} depends on the value of the viscosity parameter $\nu>0$.
We will indicate this by denoting the solution in Theorem \ref{th:NS} by $u^{(\nu)}$. If $\nu=0$ then 
$u^{(0)}$ will denote the solution of the Euler equation constructed in \cite[Theorem 1.1]{McOwenTopalov4}. 
We have the following Theorem.

\begin{Th}\label{th:NS_nu}
Assume that $m>2+\frac{d}{p}$, $-1/2<\delta+\frac{d}{p}<d+1$, and let $m_0:=m+8$. 
Then for any $u_\bullet\in\aW^{m_0,p}_\delta$ there exist $\tau>0$ and an open neighborhood 
$\mathcal{O}(u_\bullet)$ in $\aW^{m_0,p}_\delta$ independent of the choice of $0\le\nu\le 1$ such that the map
\[
\mathcal{O}(u_\bullet)\times[0,1]\to 
C_b\big([0,\tau),\aW^{m,p}_\delta\big)\cap C^1_b\big([0,\tau),\aW^{m-2,p}_\delta\big),\quad
(u_0,\nu)\mapsto u^{(\nu)},
\]
where $u^{(\nu)}$ is the solution of the Navier-Stokes equation (or of the Euler equation when $\nu=0$), is continuous. 
In particular, for any initial data $u_0\in\mathcal{O}(u_\bullet)$ we have that $u^{(\nu)}\to u^{(0)}$ as $\nu\to 0+$ 
where the convergence happens in the space  
$C_b\big([0,\tau),\aW^{m,p}_\delta\big)\cap C^1_b\big([0,\tau),\aW^{m-2,p}_\delta\big)$.
\end{Th}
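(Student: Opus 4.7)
The plan is to combine a $\nu$-uniform existence result derived from Theorem~\ref{th:NS} with a Lie--Trotter splitting that decouples the transport and dissipation parts of \eqref{eq:NS}, and to extract joint continuity in $(u_0,\nu)$ from continuity of the finite splitting together with uniformity of the convergence in $\nu$. The first step is to revisit the proof of Theorem~\ref{th:NS} and verify that its fixed-point argument, with the regularizing term $\nu\Delta u$ treated as a perturbation that consumes two of the eight extra derivatives furnished by working in $\aW^{m_0,p}_\delta$ and exploits the weight shift $W^{m,p}_\delta\to W^{m-2,p}_{\delta+2}$, yields the same existence time $\tau>0$ and neighborhood $\mathcal{O}(u_\bullet)\subset\aW^{m_0,p}_\delta$ for all $\nu\in[0,1]$. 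This is consistent with the Euler well-posedness of \cite{McOwenTopalov5} at $\nu=0$, since the $\nu$-term then contributes at worst $O(1)$ factors uniformly in $\nu\in[0,1]$ on the existence interval dictated by the quadratic Euler part.

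Next, to obtain joint continuity, I would invoke the nonlinear Lie--Trotter product formula advertised in the abstract. Let $E_t$ denote the Euler solution map on $\aW^{m_0,p}_\delta$ furnished by \cite{McOwenTopalov5}, and let $H^{(\nu)}_t$ denote the conjugated heat semigroup of strength $\nu t$ on the same space. The key identity one aims for is
\[
u^{(\nu)}(t)=\lim_{n\to\infty}\bigl(H^{(\nu)}_{t/n}\circ E_{t/n}\bigr)^{n}(u_0),
\]
with convergence in $C_b\bigl([0,\tau),\aW^{m,p}_\delta\bigr)\cap C^1_b\bigl([0,\tau),\aW^{m-2,p}_\delta\bigr)$ that is uniform in $(u_0,\nu)\in\mathcal{O}(u_\bullet)\times[0,1]$. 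The derivative budget $m_0=m+8$ pays for the commutator estimates needed when repeatedly interchanging Euler and heat factors in this derivation. At $\nu=0$ the heat factors collapse to the identity and the formula reproduces the Euler flow, while for $\nu>0$ each finite composition depends continuously on $(u_0,\nu)$ as a composition of continuous maps. Since a uniform limit of continuous maps is continuous, joint continuity of $(u_0,\nu)\mapsto u^{(\nu)}$ follows at once; the convergence $u^{(\nu)}\to u^{(0)}$ as $\nu\to 0+$ is then a special case.

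The main obstacle is the construction and control of the conjugated heat flow $H^{(\nu)}_t$ on the growth spaces $\aW^{m_0,p}_\delta$ in the range $-1/2<\delta+\tfrac{d}{p}<d+1$, in particular when $\delta+\tfrac{d}{p}<0$ and elements may grow polynomially at infinity. The ordinary heat semigroup $e^{\nu t\Delta}$ does not act boundedly on these spaces in a weight-preserving way, because $\Delta$ sends $W^{m,p}_\delta$ to $W^{m-2,p}_{\delta+2}$ and the resulting Neumann/Duhamel iterations do not close against this weight shift. The remedy is to conjugate $e^{\nu t\Delta}$ by a Lagrangian diffeomorphism---the Euler flow itself or a suitable frozen-coefficient approximation---whose asymptotic structure at infinity exactly compensates for the weight mismatch and produces a semigroup on $\aW^{m_0,p}_\delta$ with bounds uniform in $\nu\in[0,1]$. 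Establishing these bounds together with the commutator estimates that make the Trotter scheme converge is the technical heart of the argument; once in place, the assembly into continuity of $(u_0,\nu)\mapsto u^{(\nu)}$ is a standard uniform-convergence argument.
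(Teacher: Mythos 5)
Your second and third paragraphs correctly identify the paper's central mechanism — a nonlinear Lie--Trotter splitting in which the conjugated heat flow plays the role of the dissipative semigroup — but there are two substantive misreadings of how the argument is organized, and one of them would sink the proposal if taken literally.

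First, there is no fixed-point argument behind Theorem~\ref{th:NS} that one could ``revisit'' and make $\nu$-uniform. The paper itself emphasizes that the Duhamel/semigroup approach of \cite{McOwenTopalov5} fails in the weighted spaces $\aW^{m,p}_\delta$ with $\delta+\frac{d}{p}<0$ because the heat smoothing there comes with a loss of weight. Theorem~\ref{th:NS} and Theorem~\ref{th:NS_nu} are proved by one and the same Trotter construction: the well-posedness and the $\nu$-continuity are both read off the product formula (Theorem~\ref{th:product_formula}), whose $\nu$-uniformity is established in Proposition~\ref{prop:product_formula_nu} via Remarks~\ref{rem:nu1}, \ref{rem:nu2}, \ref{rem:nu3}. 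So your proposed ``first step'' addresses a non-existent argument, and the uniform $\tau$ and $\mathcal{O}(u_\bullet)$ must come from the Trotter scheme itself.

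Second, and more seriously, you set up the Trotter iteration $\bigl(H^{(\nu)}_{t/n}\circ E_{t/n}\bigr)^n(u_0)$ directly in Eulerian variables, with $E_t$ the Euler solution map on $\aW^{m_0,p}_\delta$ and $H^{(\nu)}_t$ a heat flow conjugated by ``the Euler flow itself or a frozen-coefficient approximation.'' This does not close: in Eulerian coordinates the map $u_0\mapsto u(t)$ for Euler loses a derivative (the $u\cdot\nabla u$ term), so $E_t$ is not even $C^1$ on the relevant space, and the commutator/second-derivative estimates the Trotter proof requires (conditions $(a)$--$(d)$ of Proposition~\ref{prop:product_formula}) are unavailable. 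The paper sidesteps this by performing the entire splitting in Lagrangian coordinates $(\varphi,v)\in\D^{m,p}_\delta\times W^{m,p}_\delta$: there the Euler part $\G_t$ is the flow of a $C^\infty$ vector field $\E$ (the ODE \eqref{eq:E-NS}), and the heat factor is $\Sz_t(\varphi,v)=(\varphi,S_\varphi(t)v)$ with $S_\varphi(t)=R_\varphi S(t)R_{\varphi^{-1}}$, where $\varphi$ is the \emph{current} Lagrangian configuration in the iterate, not a fixed diffeomorphism supplied by the Euler flow. Only at the end is the Eulerian $u=v\circ\varphi^{-1}$ recovered. Relatedly, the uniform-in-$\nu$ Trotter convergence gives continuity into $C_b([0,\tau),X)$; the upgrade to $C^1_b([0,\tau),\aW^{m-2,p}_\delta)$ in the Eulerian picture requires the bootstrap through the equation at the end of the proof of Theorem~\ref{th:NS}, which your ``uniform limit of continuous maps'' argument does not by itself supply.
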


The condition on the pressure in Theorem \ref{th:NS} is needed for the uniqueness of the solutions.
This condition can be replaced by similar conditions on the pressure or on the solution, but cannot be avoided.
Note that the pressure in Theorem \ref{th:NS} is determined uniquely (up to an additive constant)
and $\p\in C_b\big([0,\tau),W^{m+1,p}_{2\delta+\frac{d}{p}}\big)$. In particular,
$\p(t,x)=O(|x|^{2\kappa})$ as $|x|\to\infty$ where $-(d+1)<\kappa<1/2$  with constants uniform 
on $t\in[0,\tau)$.

Theorem \ref{th:NS} and Theorem \ref{th:NS_nu} are proved in Section \ref{sec:unbounded_solutions}.
The proof is based on the mapping properties of the conjugated heat flow on weighted Sobolev spaces 
(Theorem \ref{th:conjugate_Laplace_operator}) proved in Section \ref{sec:conjugated_Laplace_operator}
as well as on a new variant of the Lie-Trotter product formula for nonlinear semigroups (Theorem \ref{th:product_formula}), 
which is formulated in Section \ref{sec:Lie-Trotter} and proved in Section \ref{sec:Lie-Trotter_proofs}. 
Since the known variants of the Lie-Trotter product formula (cf., e.g., \cite{EM}) are not applicable
in our situation as the configuration space is not compact and the heat semigroup on weighted spaces is
not a quasi-contraction, we had to weaken the conditions needed for the Lie-Trotter formula to hold. 
We believe that Theorem \ref{th:product_formula} can be applied for proving analogous results in other situations.
The drawback of this approach is that the obtained regularity of the solutions is most likely not optimal (cf. \cite{Top1}).
Note also that the method in \cite{McOwenTopalov5} for proving the well-posedness of the Navier-Stokes equation
in the asymptotic spaces, based on the Duhamel's formula and the properties of the heat semigroup,  is not
applicable in the case of weighted Sobolev spaces $\aW^{m,p}_\delta$ with $-1/2<\delta+\frac{d}{p}<0$. 
The reason is that the smoothing of the heat semigroup on weighted Sobolev spaces comes with 
a ``loss of weight'' as indicated by \cite[Corollary 2.2]{McOwenTopalov5}.
A standard approach for proving the existence of the zero viscosity limit is based on energy estimates that are uniform in 
the viscosity parameter $\nu$ (see, e.g., \cite{Kato2,Kato3}, \cite[Chapter 3]{MB}). 
Since our weighted spaces contain unbounded functions, this approach is not readily applicable. 
The methods applied in \cite{Wol,McGrath,Swann,Leray}) also require the initial velocity to tend to zero at infinity.

\medskip

\noindent{\em The conjugated Laplace operator on weighted spaces.}
Here we formulate Theorem \ref{th:conjugate_Laplace_operator} needed for the application 
of the Lie-Trotter product formula. Assume that $m>1+\frac{d}{p}$ and $\delta+\frac{d}{p}>-1$.
Then, we can define (cf. \cite[Section 2, Appendix A]{McOwenTopalov4}) the group of diffeomorphisms on $\R^d$,
\begin{equation}\label{eq:group_D}
\D^{m,p}_\delta:=
\big\{\varphi : \R^d\to\R^d\,\big|\,\varphi=\id+f, f : \R^d\to\R^d, f\in W^{m,p}_\delta,\,\text{and}\,
\det(\dd\varphi)>0\big\},
\end{equation}
where $\id$ is the identity on $\R^d$ and $\dd\varphi$ denotes the differential of $\varphi : \R^d\to\R^d$.
The assumption $m>1+\frac{d}{p}$, $\delta+\frac{d}{p}>-1$, $\det(\dd\varphi)>0$, and
Lemma \ref{lem:W-decay} in Appendix \ref{sec:W-spaces} imply that the elements of $\D^{m,p}_\delta$ are 
orientation preserving $C^1$-diffeomorphisms on $\R^d$.
The same Lemma implies that $\det(\dd\varphi)>0$ is an open condition on the map $f$ in $W^{m,p}_\delta$.
In this way, by identifying $\varphi=\id+f\in\D^{m,p}_\delta$ with the map $f : \R^d\to\R^d$, 
we identify the elements of $\D^{m,p}_\delta$ with an open set in $W^{m,p}_\delta$
(see \cite[Section 2, Appendix A]{McOwenTopalov4}).
By \cite[Theorem 2.1]{McOwenTopalov4}, $\D^{m,p}_\delta$ is a topological group such that 
for any $0\le k\le m$ and $\gamma\in\R$ the right translation
\begin{equation}\label{eq:right_translation}
\D^{m,p}_\delta\times W^{k,p}_\gamma\to W^{k,p}_\gamma,\quad(\varphi,u)\mapsto R_\varphi u:=u\circ\varphi,
\end{equation}
is continuous (see Lemma \ref{lem:continuity_composition_general} in Appendix \ref{sec:W-spaces}). 
Let us now fix $m>1+\frac{d}{p}$, $\delta+\frac{d}{p}>-1$, and $\gamma\in\R$.
For a given $\varphi\in\D^{m,p}_\delta$ consider the {\em conjugated Laplace operator}
\begin{equation}\label{eq:conjugated_Laplace_operator}
\Delta_\varphi:=R_\varphi\circ\Delta\circ R_{\varphi^{-1}} : 
R_\varphi\big(\widetilde{W}^{m+2,p}_\gamma\big)\to W^{m,p}_\gamma
\end{equation}
where $\Delta=\sum_{j=1}^d\partial_j^2$ is the (distributional) Laplace operator on $\R^d$,
\begin{equation}\label{eq:W-tilde}
\widetilde{W}^{m+2,p}_\gamma:=\big\{f\in\Sz'\,\big|\,\partial^\alpha f\in W^{m,p}_\gamma, |\alpha|\le 2\big\},
\end{equation}
and $R_\varphi\big(\widetilde{W}^{m+2,p}_\gamma\big)$ is the image of
$\widetilde{W}^{m+2,p}_\gamma\subseteq W^{m,p}_\gamma$
under the map $R_\varphi : W^{m,p}_\gamma\to W^{m,p}_\gamma$.
It follows from \eqref{eq:right_translation} and \eqref{eq:W-tilde} that the map \eqref{eq:conjugated_Laplace_operator} is
well-defined. In what follows we consider $\Delta_\varphi$ as an unbounded operator on $W^{m,p}_\gamma$ with domain
$D_{\Delta_\varphi}:= R_\varphi\big(\widetilde{W}^{m+2,p}_\gamma\big)$ in $W^{m,p}_\gamma$.
Since the map $R_\varphi : W^{m,p}_\gamma\to W^{m,p}_\gamma$ is an isomorphism of Banach spaces we see that
$D_{\Delta_\varphi}$ is dense in $W^{m,p}_\gamma$.
For $X$ and $Y$ Banach spaces we denote the Banach space of bounded linear maps
$X\to Y$ by $\LL(X,Y)$ and let $\LL(X):=\LL(X,X)$ when $X=Y$. We have the following Theorem.

\begin{Th}\label{th:conjugate_Laplace_operator}
Assume $m>1+\frac{d}{p}$, $\delta+\frac{d}{p}>-1$, and $\gamma\in\R$.
\begin{itemize}
\item[(i)] For any given $\varphi\in\D^{m,p}_\delta$ the conjugated Laplace operator $\Delta_\varphi$
considered as an unbounded operator on $W^{m,p}_\gamma$ with domain 
$D_{\Delta_\varphi}= R_\varphi\big(\widetilde{W}^{m+2,p}_\gamma\big)$ is closed and
its spectrum is contained in $(-\infty,0]$.
\item[(ii)] For any given $\varphi\in\D^{m,p}_\delta$ the operator $\Delta_\varphi$ on $W^{m,p}_\gamma$ 
with domain $D_{\Delta_\varphi}$ is a generator of an analytic semigroup $\{e^{t\Delta_\varphi}\}_{t\ge 0}$
of angle $\pi/2$. For $t>0$ we have that $e^{t\Delta_\varphi}=S_\varphi(t)$ where $S_\varphi(t)$ is
given by \eqref{eq:S(t)-conjugated}.
\item[(iii)] For $\gamma+\frac{d}{p}>-1$ the map
\begin{equation}\label{eq:exp_map}
(0,\infty)\times\D^{m,p}_\delta\to\LL(W^{m,p}_\gamma),\quad(t,\varphi)\mapsto S_\varphi(t),
\end{equation}
is analytic. Moreover, for any given $\varphi_0\in\D^{m,p}_\delta$ and for any $\ell_0\ge 1$ there exist
an open neighborhood of $\varphi_0$ in $\D^{m,p}_\delta$ and a constant $C>0$ such that for any $0\le\ell\le\ell_0$
the $\ell$-th differential of the map \eqref{eq:exp_map} in the direction of the first variable satisfies
the global in time estimate
\begin{equation}\label{eq:exp_map_uniformly_bounded}
\big\|\dd^\ell_\varphi\big(S_{(\cdot)}(t)\big)\big\|_{\LL((W^{m,p}_\gamma)^\ell,\LL(W^{m,p}_\gamma))}\le 
C\,(1+t)^{|\gamma|/2},
\end{equation}
for any $\varphi$ in the open neighborhood of $\varphi_0$ and for any $t\in[0,\infty)$.
\end{itemize}
\end{Th}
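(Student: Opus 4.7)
The plan is to establish parts (i) and (ii) by conjugation to the case $\varphi = \id$, and then handle the analyticity and the time-uniform derivative bounds in (iii) via the explicit heat kernel representation of $S_\varphi(t)$.

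For (i) and (ii), I would first treat the unconjugated operator: the Laplacian $\Delta : \widetilde{W}^{m+2,p}_\gamma \to W^{m,p}_\gamma$, viewed as unbounded on $W^{m,p}_\gamma$, is closed directly from the definition of $\widetilde{W}^{m+2,p}_\gamma$, and a sectorial resolvent estimate
\[
\big\|(\lambda - \Delta)^{-1}\big\|_{\LL(W^{m,p}_\gamma)} \le C/|\lambda|, \qquad |\arg \lambda| < \pi,
\]
yields spectrum in $(-\infty, 0]$ and generation of an analytic semigroup of angle $\pi/2$ coinciding with the heat semigroup $e^{t\Delta}$. Such estimates come from the Gaussian kernel together with the weighted $L^p$ mapping properties already developed in the McOwenTopalov series. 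Since $R_\varphi$ is a Banach space isomorphism of $W^{m,p}_\gamma$ sending $\widetilde{W}^{m+2,p}_\gamma$ onto $D_{\Delta_\varphi}$, conjugation transfers closedness, spectrum, semigroup generation and the identity $e^{t\Delta_\varphi} = R_\varphi\, e^{t\Delta}\, R_{\varphi^{-1}} = S_\varphi(t)$ to $\Delta_\varphi$, proving (i) and (ii).

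Part (iii) is where the real work lies. For $t > 0$, the change of variables $y \mapsto \varphi(y)$ gives the kernel of $S_\varphi(t)$ as
\[
K_\varphi(t, x, y) = G_t\big(\varphi(x) - \varphi(y)\big)\, \big|\det \dd\varphi(y)\big|,
\]
with $G_t(z) = (4\pi t)^{-d/2} e^{-|z|^2/(4t)}$ the Euclidean heat kernel. Since $G_t$ is entire and the map $\varphi \mapsto \dd\varphi$ is affine from the open set $\D^{m,p}_\delta \subset W^{m,p}_\delta$ into the appropriate weighted space, the dependence $\varphi \mapsto K_\varphi(t,\cdot,\cdot)$ is pointwise analytic. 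I would promote this to operator analyticity of \eqref{eq:exp_map} by expanding $K_\varphi$ in a power series around a base point $\varphi_0$ and estimating each term as a bounded operator on $W^{m,p}_\gamma$, using Gaussian decay to absorb the polynomial factors produced by the expansion; convergence in $\LL(W^{m,p}_\gamma)$ follows on a sufficiently small neighborhood of $\varphi_0$. Analyticity in $t$ is inherited from the analytic semigroup structure and commutes with the $\varphi$-expansion.

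The main obstacle is the uniform-in-time bound \eqref{eq:exp_map_uniformly_bounded}. Each $\varphi$-derivative of $K_\varphi$ introduces factors involving $\big(\varphi(x) - \varphi(y)\big)/t$ that are absorbed by the Gaussian only at the cost of a $t^{-\ell/2}$ which is harmless solely for bounded $t$; simultaneously, transporting the weight $\x^\gamma$ across the kernel relative to $\y^\gamma$ produces a ratio comparable to $1$ in the regime $|x - y| \lesssim \x$ but growing when $|x - y| \gg \x$. My plan is to split the integration into these two regimes: near the diagonal the weights are equivalent and one obtains bounds independent of $t$; in the off-diagonal regime the Gaussian beats the polynomial weight growth at the cost of a factor $(1+t)^{|\gamma|/2}$. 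Uniformity over a neighborhood of $\varphi_0$ is secured by choosing it small enough that $\dd\varphi$, $(\dd\varphi)^{-1}$ and $\det \dd\varphi$ are bounded above and below, using \eqref{eq:decay} and the openness of the condition $\det \dd\varphi > 0$; the combinatorial factor from $\ell \le \ell_0$ derivatives contributes an extra constant depending only on $\ell_0$, giving the claimed bound.
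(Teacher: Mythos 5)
Parts (i) and (ii) of your proposal are correct and coincide with the paper's argument: both transfer closedness, sectoriality, and the semigroup identity from $\Delta$ with domain $\widetilde W^{m+2,p}_\gamma$ via the isomorphism $R_\varphi$, citing the existing resolvent estimates for $\Delta$ on weighted Sobolev spaces. No issues there.

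Part (iii) is where your plan diverges and where there is a genuine gap. Your sketch really only carries the argument through for the $L^p_\gamma$ operator norm ($k=0$), where the convolution-type structure together with Peetre's inequality and Young's inequality gives the $(1+t)^{|\gamma|/2}$ bound; this much parallels the paper's Proposition~\ref{prop:S(t)-conjugated_preliminary}. But you assert without justification that you can ``estimate each term as a bounded operator on $W^{m,p}_\gamma$.'' For $m>0$ the norm involves $x$-derivatives, and these do not commute with the conjugated kernel when $\varphi\neq\id$; the na\"\i ve Taylor-in-$\varphi$ expansion of the kernel does not by itself produce a bound in $\LL(W^{m,p}_\gamma)$. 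The paper handles this with a separate, nontrivial mechanism: a bootstrap from $W^{k,p}_{\gamma+2}$ to $W^{k+2,p}_\gamma$ through the commutative diagram \eqref{eq:diagram2} that uses the invertibility of $\Delta_\varphi : W^{k+2,p}_\gamma \to W^{k,p}_{\gamma+2}$ modulo kernel and cokernel. This in turn requires the splitting lemmas for constants (Lemma~\ref{lem:constants_splitting}, Proposition~\ref{prop:constants}) in the range $-1<\gamma+\frac{d}{p}<0$ and for the finite cokernel when $\gamma+\frac{d}{p}>d-2$, followed by complex interpolation (Lemma~\ref{lem:interpolation}) to fill in odd $k$ and integer values of $\gamma+\frac{d}{p}$. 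None of this appears in your proposal, and without it the passage from $L^p_\gamma$ to $W^{m,p}_\gamma$ is not established.

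A second, smaller issue: the paper obtains the uniform-in-$t$ bounds on the $\varphi$-derivatives by complexifying the map (replacing $\varphi$ by $\varphi_\C=\id+f+if_I$, Lemma~\ref{lem:t-complex}) and then applying the Cauchy estimate to the analytic map, so that only the zeroth-order bound \eqref{eq:bound_general} ever needs to be proved directly. Your proposal instead tries to estimate the $\varphi$-derivatives of the kernel by hand, and the discussion of a ``$t^{-\ell/2}$'' cost is not accurate: each $\varphi$-derivative of $G_t(\varphi(x)-\varphi(y))$ produces a factor $\tfrac{(\varphi(x)-\varphi(y))\cdot(h(x)-h(y))}{2t}$, and since $|\varphi(x)-\varphi(y)|,\,|h(x)-h(y)|\lesssim|x-y|$ this is $O(|x-y|^2/t)$, which the Gaussian absorbs with no $t$-singularity at all. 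The danger is elsewhere: if you instead argue ``operator-theoretically'' via $\dd_\varphi(R_\varphi)\circ S(t)\circ R_{\varphi^{-1}}$, you do meet $t^{-1/2}$ singularities from $\partial S(t)$, and the cancellation between the $R_\varphi$ and $R_{\varphi^{-1}}$ contributions that removes them must be exhibited. The Cauchy-estimate route cleanly sidesteps this. You should either adopt the complexification argument or verify these cancellations explicitly, and you should add the bootstrap and interpolation steps for $m>0$.
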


\noindent For any given $t>0$ and $\ell\ge 0$ the $\ell$-th differential 
$\dd_\varphi^\ell\big(S_{(\cdot)}(t)\big)$ of \eqref{eq:exp_map} in the direction of the first variable
belongs to $\LL\big(\big(W^{m,p}_\delta\big)^\ell,\LL\big(W^{m,p}_\delta\big)\big)$
where $\big(W^{m,p}_\delta\big)^\ell$ is the direct product of $\ell$ copies of $W^{m,p}_\delta$.
Recall that an analytic semigroup $\{e^{t A}\}_{t\ge 0}$ on a Banach space $X$ with a generator $A$
is called {\em analytic of angle} $\pi/2$ if for any $0<\vartheta<\pi/2$ the semigroup extends to an analytic map
$\mathbb{S}_\vartheta\to\LL(X)$, $t\mapsto e^{tA}$, where $\mathbb{S}_\vartheta$ denotes
the centered at zero cone
\begin{equation}\label{eq:the_cone}
\mathbb{S}_\vartheta:=\big\{z\in\C\setminus\{0\}\,\big|\,|\arg z|<\vartheta\big\}
\end{equation}
in the complex plane $\C$ (see, e.g., \cite[\S\,2.5]{Pazy} for the definition of an analytic semigroup).
Note that the uniformity in $t\in[0,\infty)$ and the local uniformity in $\varphi\in\D^{m,p}_\delta$ of
the estimate \eqref{eq:exp_map_uniformly_bounded} are crucial for the applications to the Navier-Stokes equation.
By $X^*$ we denote the space of bounded linear functionals on $X$.
In the case when $X$ and $Y$ are linear spaces over $\C$, we will assume that $\LL(X,Y)$ and $X^*$
consist of complex linear maps. 

\medskip

\noindent{\em Related work.} There are many important works on the solutions of the Navier-Stokes
equation in various function spaces.
Here we concentrate only on works related to the existence an the properties of non-decaying solutions
of the Euler and the Navier-Stokes equation on unbounded domains; we refer to the monographs 
\cite{MB,Chemin} and \cite{Kato3} for more complete list of references. 
We first mention the classical work of Wolibner \cite{Wol}
and the work of Serfati \cite{Serfati1}, where solutions of the Euler equation were constructed in 
H\"older spaces on unbounded domains.
Non-decaying bounded solutions of the Navier-Stokes equation were also considered in \cite{GMZ} and \cite{GIM}.
Unbounded solutions for the 2d Euler equation were constructed by Cozzi and Kelliher \cite{CK} and by Benedetto,
Marchioro, and Pulvirenti \cite{BMP1}. The general case of $\R^d$, $d\ge 2$, is considered in
\cite[Theorem 1.1]{McOwenTopalov4} by using the framework of the weighted Sobolev spaces. 
We also mention the recent work of Comb and Koch \cite{CK} where unbounded solutions of order
$O\big(|x|^\kappa\big)$, $\kappa<1/2$, as $|x|\to\infty$ are constructed for the 2d Euler equation. 
The existence of weak solutions of sublinear growth for the 2d Navier-Stokes equation was obtained in \cite{Ben}
under particular conditions on the velocity and vorticity. 
These works do not discuss the zero viscosity limit of the solutions of th Navier-Stokes equation.
To our best knowledge, Theorem \ref{th:NS} and Theorem \ref{th:NS_nu} are the only result that
allows classical solutions of the Navier-Stokes equation to grow as $|x|\to\infty$ in dimensions greater
than two and that establish the existence of the zero viscosity limit for such solutions. We expect that
similar results hold also for the asymptotic spaces.
Finally, note that in the 1d case unbounded solutions of the KdV and the modified KdV equations were 
constructed in \cite{BS,KPST}.

\section{The conjugated heat flow on weighted spaces}\label{sec:conjugated_Laplace_operator}
The purpose of this Section is to prove Theorem \ref{th:conjugate_Laplace_operator} formulated in the Introduction.
Recall from \cite[Corollary 2.2]{McOwenTopalov5} that for any $k\ge 0$ and $\gamma\in\R$
the Laplace operator $\Delta$ on $\R^d$ generates an analytic semigroup $\{e^{t\Delta}\}_{t\ge 0}$ of bounded linear
operators on the weighted Sobolev space $W^{k,p}_\gamma$. 
More specifically, the generator of $\{e^{t\Delta}\}_{t\ge 0}$ is the Laplace operator $\Delta$ considered as
an unbounded operator on $W^{k,p}_\gamma$ with domain $\widetilde{W}^{k+2,p}_\gamma$ (cf. \eqref{eq:W-tilde}).
Take $u_0\in W^{k,p}_\gamma$ and set $u(t):=e^{t\Delta}u_0$, $t\ge 0$. 
Then $u\in C\big([0,\infty), W^{k,p}_\gamma\big)\cap C^1\big((0,\infty),W^{k,p}_\gamma\big)$,
$u(t)\in\widetilde{W}^{k+2,p}_\gamma$ for $t>0$, and $u$ is a solution of the heat equation
$u_t=\Delta u$, $u|_{t=0}=u_0$.
It follows from \cite[Corollary B.1]{McOwenTopalov5} that $e^{t\Delta}=S(t)$, $t>0$, where
\begin{equation}\label{eq:S(t)}
\big(S(t)u_0\big)(x):=\frac{1}{(4\pi t)^{d/2}}\int_{\R^d}e^{-\frac{|x-y|^2}{4t}}u_0(y)\,dy
\end{equation}
for $u_0\in W^{k,p}_\gamma$.
Since the right translation \eqref{eq:right_translation} is continuous, we see that for any $t>0$ 
the {\em conjugated heat flow map}
\begin{equation}\label{eq:conjugated_heat_flow}
\D^{m,p}_\delta\times W^{k,p}_\gamma\to W^{k,p}_\gamma,\quad
(\varphi,u)\mapsto S_\varphi(t) u:=R_\varphi\circ S(t)\circ R_{\varphi^{-1}}u,\quad 0\le k\le m,
\end{equation}
is continuous. 
In order to apply the version of the Lie-Trotter product formula (Theorem \ref{th:product_formula}), we 
need to know that \eqref{eq:conjugated_heat_flow} (with $k=m$ and $\gamma=\delta$) is $C^2$
with bounds on its derivatives that are uniform in $t>0$ and locally uniform in $\varphi\in\D^{m,p}_\delta$.
In addition to this, we will also show that \eqref{eq:conjugated_heat_flow} is analytic via a multi-step process:
First, in order to use complex-analytic methods such as Cauchy's estimate, we (locally) extend 
the map \eqref{eq:conjugated_heat_flow} to a complex neighborhood of its domain of definition. 
Then we start our analysis by considering the case when $k=0$ and show that the complex extension of 
\eqref{eq:conjugated_heat_flow} is analytic and locally uniformly bounded with a constant that is uniform in $t>0$ 
(see Proposition \ref{prop:S(t)-conjugated_preliminary}). The next step is to bootstrap from $k$ to $k+2$,
so we get analyticity for all even values of $0\le k\le m$. The existence of kernel for 
$\Delta : W^{m+2,p}_\gamma\to W^{m,p}_{\gamma+2}$ when $-1/2<\gamma+\frac{d}{p}<0$ and
cokernel for non-integer $\gamma+\frac{d}{p}>d-2$ requires splitting arguments in the analysis. 
Then we use interpolation to obtain analyticity for all values of $0\le k\le m$, including the integer values of
$\gamma+\frac{d}{p}$ that were excluded from the previous analysis (see Proposition \ref{prop:S(t)-conjugated}).
Theorem \ref{th:conjugate_Laplace_operator} is proven at the end of the Section.

\subsection{Analyticity for $k=0$}
For a given $\varphi\in\D^{m,p}_\delta$, $\delta+\frac{d}{p}>-1$, and $u_0\in W^{k,p}_\gamma$, 
$0\le k\le m$, $\gamma\in\R$, we obtain from \eqref{eq:S(t)} that for $t>0$,
\begin{equation}\label{eq:S(t)-conjugated}
\big(S_\varphi(t)u_0\big)(x)=\big(R_\varphi\circ S(t)\circ R_{\varphi^{-1}}u_0\big)(x)=
\frac{1}{(4\pi t)^{d/2}}\int_{\R^d}e^{-\frac{(\varphi(x)-\varphi(y))^2}{4 t}}u_0(y)\det(\dd_y\varphi)\,dy
\end{equation}
where we changed the variables in the integral and set $z^2:=\sum_{j=1}^dz_j^2$ for $z\in\C^d$.
The integral converges due to the exponentially decaying factor in the integrand (cf. Lemma \ref{lem:t-complex} below).
We will use formula \eqref{eq:S(t)-conjugated} to extend the map \eqref{eq:conjugated_heat_flow} to a complex
neighborhood of its domain of definition.
To this end, take $\varphi_0\in\D^{m,p}_\delta$ such that $\varphi_0=\id+f_0$ for some $f_0\in W^{m,p}_\delta$.
We then choose an open neighborhood $U(\varphi_0)$ of $f_0$ in $W^{m,p}_\delta$ such that
$\id+f\in\D^{m,p}_\delta$ for $f\in U(\varphi_0)$, an open neighborhood $V(0)$ of zero in 
$W^{m,p}_\delta$, and consider the complex neighborhood
\begin{equation}\label{eq:U_C}
\U_\C(\varphi_0):=\big\{f+i f_I\,\big|\,f\in U(\varphi_0),f_I\in V(0)\}
\end{equation}
of $f_0$ in $W^{m,p}_{\delta,\C}$.
(The notation $U(\varphi_0)$ for the open neighborhood of $f_0$ in $W^{m,p}_\delta$ reflects the fact that
we regard $f\in U(\varphi_0)$ as a coordinate on the Banach manifold $\D^{m,p}_\delta$.)
It follows from Theorem \ref{th:composition} and Lemma \ref{lem:continuity_composition_general} in 
Appendix \ref{sec:W-spaces} that we can choose the open neighborhood $U(\varphi_0)$ of $f_0$ in 
$W^{m,p}_\delta$, so that there exists a constant $C>0$ such that for any $0\le k\le m$ and $\gamma\in\R$,
\begin{equation}\label{eq:R-local_boundedness}
\|u\circ\varphi\|_{W^{k,p}_\gamma}\le C\|u\|_{W^{k,p}_\gamma}\quad\text{and}\quad\,
\|u\circ\varphi^{-1}\|_{W^{k,p}_\gamma}\le C\|u\|_{W^{k,p}_\gamma}
\end{equation}
for any $\varphi=\id+f$ with $f\in U(\varphi_0)$ and for any $u\in W^{k,p}_\gamma$.
Now consider the map 
\[
\varphi_\C(x):=x+f_\C(x)\quad\text{\rm where}\quad f_\C:=f+i f_I\in\U_\C(\varphi_0).
\]
Note that this map can no longer be interpreted as a diffeomorphism on $\R^d$  for $f_I\ne 0$ but
the integral on the right side of \eqref{eq:S(t)-conjugated} is well-defined for $\varphi$ replaced by $\varphi_\C$.
For $f_\C\in\U_\C(\varphi_0)$, $u_0\in W^{k,p}_{\gamma,\C}$, and $\real(t)>0$, we set
\begin{equation}\label{eq:S(t)-congugatedC}
\big(S_{\varphi_\C}(t)u_0\big)(x):=
\frac{1}{(4\pi t)^{d/2}}\int_{\R^d}e^{-\frac{(\varphi_\C(x)-\varphi_\C(y))^2}{4 t}}
u_0(y)\det(\dd_y\varphi_\C)\,dy
\end{equation}
where $\det(\dd_y\varphi_\C)=\det\big(\Id+\dd_y(f+i f_I)\big)$ and $\Id$ denotes the identity $d\times d$-matrix.
By the definition of $\U_\C(\varphi_0)$, we have that $\varphi_\C(x)=\varphi(x)+i f_I(x)$ 
where $\varphi:=\id+f\in\D^{m,p}_\delta$ and $f_I\in V(0)\subseteq W^{m,p}_\delta$.
This implies that
\begin{align*}
&\big(R_{\varphi^{-1}}\circ S_{\varphi_\C}(t)\circ R_{\varphi}u_0\big)(x)=\\
&=\frac{1}{(4\pi t)^{d/2}}\int_{\R^d}
\exp\left(-\frac{\Big(\big(x+i f_I\circ\varphi^{-1}(x)\big)-\big(\varphi(y')+i f_I(y')\big)\Big)^2}{4 t}\right)
u_0\circ\varphi(y')\det(\dd_{y'}\varphi_\C)\,dy'\\
&=\frac{1}{(4\pi t)^{d/2}}\int_{\R^d}
\exp\left(-\frac{\Big(\big(x+i\tilde{f}_I(x)\big)-\big(y+i\tilde{f}_I(y)\big)\Big)^2}{4 t}\right)
u_0(y)\det\big(\Id+i \dd_y\tilde{f}_I\big)\,dy
\end{align*}
where we passed to the new variable $y=\varphi(y')$ in the integral, used the relation
$\varphi_\C\circ\varphi^{-1}(y)=y+i f_I\circ\varphi^{-1}(y)$ to conclude that
$(\dd_{\varphi^{-1}(y)}\varphi_\C)(\dd_y\varphi^{-1})=\Id+i\,\dd_y(f_I\circ\varphi^{-1})$, 
and set $\tilde{f}_I:=f_I\circ\varphi^{-1}$.
Hence, for any $f_\C\in\U_\C(\varphi_0)$, $u_0\in W^{k,p}_{\gamma,\C}$, and $\real(t)>0$, we have that
\begin{align}\label{eq:S(t)-conjugated*}
\big(R_{\varphi^{-1}}\circ S_{\varphi_\C}(t)\circ R_{\varphi}u_0\big)(x)=
\int_{\R^d}E(x,y,t)\,\tilde{u}_0(x-y)\,dy
\end{align}
where 
\begin{equation}\label{eq:u_0,f_I-tilde}
\tilde{u}_0(x):=u_0(x)\det\big(\Id+i(\dd_{x}\tilde{f}_I)\big)\quad\text{with}\quad
\tilde{f}_I= f_I\circ\varphi^{-1}
\end{equation}
and
\begin{equation*}
E(x,y,t):=
\frac{1}{(4\pi t)^{d/2}}\exp\left(-\frac{\Big(y+i\big(\tilde{f}_I(x)-\tilde{f}_I(x-y)\big)\Big)^2}{4 t}\right).
\end{equation*}
Our proof of the analyticity of the conjugated heat flow is (partially) based on
the analysis of the integral transform \eqref{eq:S(t)-conjugated*}.
We first prove the following Lemma.

\begin{Lem}\label{lem:t-complex}
Assume that $m>1+\frac{d}{p}$ and $\delta+\frac{d}{p}>-1$.
Then, for any $\varphi_0=\id+f_0\in\D^{m,p}_\delta$ and $0<\vartheta<\pi/2$ there exists an open 
neighborhood $\U_\C(\varphi_0)$ of $f_0$ in $W^{m,p}_{\delta,\C}$, so that for any 
$f_\C= f+i f_I\in \U_\C(\varphi_0)$ and for any $t\in\mathbb{S}_\vartheta$ (cf. \eqref{eq:the_cone}),
\begin{equation}\label{eq:t-complex}
\left|\exp\left(-\frac{\Big(y+i\big(\tilde{f}_I(x)-\tilde{f}_I(x-y)\big)\Big)^2}{4 t}\right)\right|\le
e^{-\frac{|y|^2}{8|t|}\cos\vartheta}\quad\forall x,y\in\R^d,
\end{equation}
where $\tilde{f}_I= f_I\circ\varphi^{-1}$ and $\varphi=\id+f$.
\end{Lem}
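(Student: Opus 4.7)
The plan is to compute the real part of the exponent explicitly, identify the ``bad'' cross terms, and then absorb them by shrinking $\U_\C(\varphi_0)$ so that $\nabla\tilde f_I$ is uniformly small.

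Write $t=|t|e^{i\theta}$ with $|\theta|\le\vartheta<\pi/2$, set $b:=\tilde f_I(x)-\tilde f_I(x-y)\in\R^d$ (real, since $f_I$ is real-valued), and expand
\[
(y+ib)^2=\sum_{j=1}^d(y_j+ib_j)^2=|y|^2-|b|^2+2i\,(y\cdot b).
\]
Multiplying by $e^{-i\theta}/(4|t|)$ and taking the real part, we find that
\[
\real\!\left(-\frac{(y+ib)^2}{4t}\right)=-\frac{1}{4|t|}\Big[\cos\theta\,(|y|^2-|b|^2)+2\sin\theta\,(y\cdot b)\Big].
\]
Thus the inequality \eqref{eq:t-complex} amounts to showing
\[
\cos\theta\,(|y|^2-|b|^2)+2\sin\theta\,(y\cdot b)\;\ge\;\tfrac{1}{2}\,|y|^2\cos\vartheta.
\]

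The key reduction is to bound $|b|$ by a small multiple of $|y|$. Since $m>1+\frac{d}{p}$, the Sobolev embedding \eqref{eq:decay} of Lemma \ref{lem:W-decay} (with $k=1$) gives $\|\nabla g\|_{\infty}\le C\|g\|_{W^{m,p}_\delta}$ for $g\in W^{m,p}_\delta$. Combining with the composition estimate \eqref{eq:R-local_boundedness} applied to $\tilde f_I=f_I\circ\varphi^{-1}$, we get
\[
\|\nabla\tilde f_I\|_\infty\le C\|\tilde f_I\|_{W^{m,p}_\delta}\le C'\|f_I\|_{W^{m,p}_\delta}
\]
uniformly for $f=\real f_\C$ in some neighborhood $U(\varphi_0)$ of $f_0$. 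By the mean value theorem applied componentwise,
\[
|b|\le\|\nabla\tilde f_I\|_\infty\,|y|,
\]
so we can make $|b|\le\varepsilon|y|$ by shrinking the purely imaginary neighborhood $V(0)$ (and thereby $\U_\C(\varphi_0)$) enough that $\|f_I\|_{W^{m,p}_\delta}\le\varepsilon/C'$.

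With $|b|\le\varepsilon|y|$ in hand, since $\cos\theta\ge\cos\vartheta$ and $|\sin\theta|\le 1$,
\[
\cos\theta(|y|^2-|b|^2)+2\sin\theta(y\cdot b)\ge\cos\vartheta\,|y|^2-\varepsilon^2|y|^2-2\varepsilon|y|^2
=\big(\cos\vartheta-2\varepsilon-\varepsilon^2\big)|y|^2.
\]
Choosing $\varepsilon=\varepsilon(\vartheta)>0$ small enough that $2\varepsilon+\varepsilon^2\le\tfrac{1}{2}\cos\vartheta$ yields the desired lower bound, and hence \eqref{eq:t-complex}. The only real obstacle is arranging the smallness of $\nabla\tilde f_I$ \emph{uniformly in} $x$ and \emph{after composition with} $\varphi^{-1}$; this is handled cleanly by \eqref{eq:R-local_boundedness} together with the Sobolev embedding, and dictates how the neighborhood $\U_\C(\varphi_0)$ in \eqref{eq:U_C} must be shrunk (in particular how small $V(0)$ must be chosen) in terms of the angle $\vartheta$.
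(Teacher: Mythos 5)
Your proof is correct and follows essentially the same route as the paper: expand $(y+ib)^2$, take the real part of the exponent, bound $|b|=|\tilde f_I(x)-\tilde f_I(x-y)|$ by a small multiple of $|y|$ via the mean value theorem together with \eqref{eq:decay} and \eqref{eq:R-local_boundedness}, and then shrink $V(0)$ so the cross and quadratic correction terms are dominated by $\tfrac12|y|^2\cos\vartheta$. The only cosmetic difference is that you track $\theta=\arg t$ explicitly and parametrize the smallness by $\varepsilon(\vartheta)$, whereas the paper bounds $\cos\theta$ and $|\sin\theta|$ at once and fixes the numerical threshold $|b|^2\le\tfrac{|y|^2}{64}\cos^2\vartheta$; these are the same argument.
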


\begin{proof}[Proof of Lemma \ref{lem:t-complex}]
Since $z^2=\sum_{j=1}^dz_j^2$ for $z\in\C^d$ we conclude that
\begin{align}
&\real\left(\frac{\Big(y+i\big(\tilde{f}_I(x)-\tilde{f}_I(x-y)\big)\Big)^2}{4 |t|e^{i\vartheta}}\right)\ge\nonumber\\
&\ge\frac{\Big(|y|^2-\big|\tilde{f}_I(x)-\tilde{f}_I(x-y)\big|^2\Big)\cos\vartheta
-2\,|y|\big|\tilde{f}_I(x)-\tilde{f}_I(x-y)\big|}{4 |t|}\label{eq:real_part}
\end{align}
where we write $t=|t|e^{i\vartheta}$ and use the Cauchy-Schwarz inequality. 
Since $W^{m,p}_\delta\subseteq C^1$ for $m>1+\frac{d}{p}$, we obtain from the mean value theorem
that for any $x,y\in\R^d$,
\begin{equation}\label{eq:mean-value1}
\tilde{f}_I(x)-\tilde{f}_I(x-y)=\Big(\int_0^1\dd_{x-t y}\tilde{f}_I\,dt\Big)\,y\,.
\end{equation}
Lemma \ref{lem:W-decay} in Appendix \ref{sec:W-spaces} and the assumption $\delta+\frac{d}{p}>-1$ 
on the weight imply that $W^{m-1,p}_{\delta+1}\subseteq L^\infty$.  
Since $\dd\tilde{f}_I\in W^{m-1,p}_{\delta+1}$ we then conclude from \eqref{eq:mean-value1} and
\eqref{eq:R-local_boundedness} that for any $x,y\in\R^d$,
\begin{equation}\label{eq:mean-value2}
\big|\tilde{f}_I(x)-\tilde{f}_I(x-y)\big|\le C\,\|f_I\|_{W^{m,p}_\delta}|y|
\end{equation}
with a constant $C>0$ independent of the choice of $f_I\in W^{m,p}_\delta$ and 
$\varphi=\id+f$ with $f\in U(\varphi_0)$. 
By shrinking the neighborhood $V(0)$ of zero in $W^{m,p}_\delta$ if necessary
we then obtain from \eqref{eq:mean-value2} that
\begin{equation*}
\big|\tilde{f}_I(x)-\tilde{f}_I(x-y)\big|^2\le\frac{|y|^2}{64}\cos^2\vartheta
\end{equation*}
for any $f_\C= f+i f_I\in\U_\C(\varphi_0)$. Then, we have
\[
\Big(|y|^2-\big|\tilde{f}_I(x)-\tilde{f}_I(x-y)\big|^2\Big)\cos\vartheta
-2\,|y|\big|\tilde{f}_I(x)-\tilde{f}_I(x-y)\big|\ge\frac{|y|^2}{2}\cos\vartheta.
\]
By combining this with \eqref{eq:real_part} we conclude the proof of the Lemma.
\end{proof}

We are now ready to prove

\begin{Prop}\label{prop:S(t)-conjugated_preliminary}
Assume that $m>1+\frac{d}{p}$, $\delta+\frac{d}{p}>-1$, and $\gamma\in\R$.
Then, for any given $\varphi_0=\id+f_0\in\D^{m,p}_\delta$ and $0<\vartheta<\pi/2$
there exists an open neighborhood $\U_\C(\varphi_0)$ of $f_0$ in $W^{m,p}_{\delta,\C}$ so that the map 
\begin{equation}\label{eq:analyticity_preliminary}
\mathbb{S}_{\vartheta}\times\U_\C(\varphi_0)\to\LL\big(L^p_{\gamma,\C}\big),\quad
(t,f_\C)\mapsto S_{\varphi_\C}(t),
\end{equation}
is analytic. Moreover, there exists a constant $C>0$ such that
\begin{equation}\label{eq:bound_preliminary}
\big\|S_{\varphi_\C}(t)\big\|_{\LL(L^p_{\gamma,\C})}\le C\big(1+|t|\big)^{|\gamma|/2}
\end{equation}
for any $f_\C\in\U_\C(\varphi_0)$ and $t\in\mathbb{S}_\vartheta$.
\end{Prop}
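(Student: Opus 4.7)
\noindent\textbf{Proof plan for Proposition \ref{prop:S(t)-conjugated_preliminary}.}

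The strategy is to split the argument into a global-in-$t$ operator bound and a joint analyticity statement, exploiting two different representations of $S_{\varphi_\C}(t)$ tailored to each task.

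For the bound \eqref{eq:bound_preliminary}, I would use the representation \eqref{eq:S(t)-conjugated*} to factor
\[
S_{\varphi_\C}(t) = R_\varphi \circ T(t) \circ R_{\varphi^{-1}},
\]
where $T(t)$ is the integral operator with kernel $E(x,y,t)$ acting on $v \mapsto \det(\Id + i\, d\tilde{f}_I)\, v$. The outer composition operators are uniformly bounded on $L^p_{\gamma,\C}$ over $\U_\C(\varphi_0)$ by \eqref{eq:R-local_boundedness}, and the multiplier lies uniformly in $L^\infty$ because the hypotheses $m>1+d/p$ and $\delta+d/p>-1$ together with Lemma \ref{lem:W-decay} give $d\tilde f_I\in W^{m-1,p}_{\delta+1}\hookrightarrow L^\infty$. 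The problem thus reduces to estimating the convolution-type operator with kernel $|E(x,y,t)|$, which Lemma \ref{lem:t-complex} dominates by $(4\pi|t|)^{-d/2}\exp(-c|y|^2/|t|)$ with $c=(\cos\vartheta)/8$. Peetre's inequality $\langle x\rangle^\gamma\le 2^{|\gamma|/2}\langle x-y\rangle^\gamma\langle y\rangle^{|\gamma|}$ converts the weighted $L^p_x$ estimate into a Young convolution bound against the radial kernel $K(y,t):=(4\pi|t|)^{-d/2}\langle y\rangle^{|\gamma|}\exp(-c|y|^2/|t|)$, so $\|T(t)\|_{\LL(L^p_{\gamma,\C})}\lesssim\|K(\cdot,t)\|_{L^1}$. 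The rescaling $y=(|t|/c)^{1/2}w$ combined with the submultiplicative inequality $(1+ab)^{|\gamma|/2}\le(1+a)^{|\gamma|/2}(1+b)^{|\gamma|/2}$ for $a,b\ge 0$ then yields $\|K(\cdot,t)\|_{L^1}\le C(1+|t|)^{|\gamma|/2}$, giving \eqref{eq:bound_preliminary}.

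For the analyticity of \eqref{eq:analyticity_preliminary}, I would appeal to the standard fact that a map $F:\Omega\to\LL(L^p_{\gamma,\C})$ is norm-analytic if and only if it is locally bounded and, for every $u_0\in L^p_{\gamma,\C}$, the vector-valued map $F(\cdot)u_0$ is analytic into $L^p_{\gamma,\C}$. Local boundedness is \eqref{eq:bound_preliminary}. For the vector-valued analyticity of $(t,f_\C)\mapsto S_{\varphi_\C}(t)u_0$, I would return to the undifferentiated expression \eqref{eq:S(t)-congugatedC}: the integrand is pointwise holomorphic in both $t\in\mathbb{S}_\vartheta$ and $f_\C\in\U_\C(\varphi_0)$, since $\det(d_y\varphi_\C)=\det(\Id+d_y f_\C)$ is polynomial in $d_yf_\C$ and $(4\pi t)^{-d/2}e^{-(\varphi_\C(x)-\varphi_\C(y))^2/(4t)}$ is an entire function of $f_\C$ and of $t^{-1}$. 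Using the Gaussian domination of Lemma \ref{lem:t-complex}, extended to a small complex neighborhood of any $(t_0,f_\C^{(0)})$ and enlarged to absorb the additional polynomial $|y|$-factors produced by derivatives in $(t,f_\C)$ into the Gaussian tail, dominated convergence legitimizes differentiation under the integral and verifies the Cauchy--Riemann conditions slice by slice. This gives Gateaux analyticity of the vector-valued map, and combined with the uniform operator bound it upgrades to norm analyticity of \eqref{eq:analyticity_preliminary}.

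The main technical point, and where I would concentrate the effort, is preserving uniformity in $t$ over the entire cone $\mathbb{S}_\vartheta$ rather than only on its compact subsets: the short-time singularity as $t\to 0$ is absorbed exactly by the heat-kernel normalization $(4\pi|t|)^{-d/2}$, while the long-time growth comes from $\langle y\rangle^{|\gamma|}$ being spread across a widening Gaussian and produces the factor $|t|^{|\gamma|/2}$. The submultiplicative inequality above is what lets these two regimes be combined into a single clean bound $(1+|t|)^{|\gamma|/2}$ without an ad hoc split of the integration domain.
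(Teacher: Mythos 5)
Your estimate \eqref{eq:bound_preliminary} follows essentially the same path as the paper: apply Lemma~\ref{lem:t-complex} to dominate the kernel $E(x,y,t)$ by a Gaussian, factor out $R_\varphi$ and $R_{\varphi^{-1}}$ using \eqref{eq:R-local_boundedness}, control the multiplier $\det(\Id+i\,\dd\tilde f_I)$ via the Banach-algebra property and Lemma~\ref{lem:W-decay}, and then reduce to a Peetre--Young convolution estimate with $\|K_{t,\gamma}\|_{L^1}\lesssim(1+|t|)^{|\gamma|/2}$. That part is sound and your explicit rescaling $y=(|t|/c)^{1/2}w$ plus the submultiplicative inequality is a clean way to get the global-in-$t$ exponent; the paper simply states ``a direct computation shows.''

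For analyticity the two routes diverge, and yours requires an additional step you do not make explicit. The paper proves \emph{weak} analyticity: it tests $S_{\varphi_\C+zh_\C}(t)$ against $\chi_1,\chi_2\in C^\infty_c$, obtaining a scalar function of $(t,z)$ given by a double integral whose $x$-variable runs over a compact set. Absolute convergence and analyticity of that scalar are then immediate from Lemma~\ref{lem:t-complex} and elementary dominated convergence, and Kato's theorem (weakly analytic plus locally bounded implies analytic, together with density of $C^\infty_c$ in $L^p_\gamma$ and $L^q_{-\gamma}$) finishes the argument. You instead aim at \emph{strong} vector-valued analyticity of $(t,f_\C)\mapsto S_{\varphi_\C}(t)u_0\in L^p_{\gamma,\C}$ by ``differentiating under the integral.'' The issue is that dominated convergence in the integrand yields \emph{pointwise-in-$x$} analyticity of $x\mapsto\big(S_{\varphi_\C}(t)u_0\big)(x)$, and passing from ``analytic for a.e.\ $x$, with a locally uniform $L^p_\gamma$ bound'' to ``analytic as an $L^p_{\gamma,\C}$-valued map'' is not automatic: you still need an argument such as Dunford's theorem that weak holomorphy (i.e.\ analyticity of $\ell\big(S_{\varphi_\C}(t)u_0\big)$ for $\ell\in L^q_{-\gamma}$, obtained by Fubini and another dominated-convergence step) implies strong holomorphy, or a Cauchy-estimate argument showing the difference quotients converge in $L^p_\gamma$. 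Either of those routes effectively passes back through weak analyticity, which is precisely where the paper starts, and with general $u_0\in L^p_\gamma$ and general $\ell$ the Fubini/dominated-convergence justification is less transparent than in the paper's compactly supported setting. So the proposal is not wrong, but the sentence ``This gives Gateaux analyticity of the vector-valued map'' hides a genuine step; I would either insert a Dunford-type argument there, or follow the paper and replace the direct differentiation-under-the-integral with weak analyticity against $C^\infty_c\times C^\infty_c$ pairs, where the convergence is trivial.
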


\begin{proof}[Proof of Proposition \ref{prop:S(t)-conjugated_preliminary}]
Take $\varphi_0=\id+f_0\in\D^{m,p}_\delta$ and $0<\vartheta<\pi/2$.
By Lemma \ref{lem:t-complex} we can find an open neighborhood $\U_\C(\varphi_0)$ of
$f_0$ in $W^{m,p}_{\delta,\C}$ such that the inequality \eqref{eq:t-complex} holds for any $x,y\in\R^d$ and
for any $f_\C= f+i f_I\in \U_\C(\varphi_0)$ and $t\in\mathbb{S}_\vartheta$.
Then, we obtain from \eqref{eq:S(t)-conjugated*} and \eqref{eq:t-complex} that there exists a constant $C_1>0$ such that 
for any $\varphi_\C=\id+f_\C$ with $f_\C\in\U_\C(\varphi_0)$, $t\in\mathbb{S}_\vartheta$, and
for any $u_0\in L^p_{\gamma,\C}$ and $x\in\R^d$,
\begin{equation}\label{eq:convolution1}
\big|\big(R_{\varphi^{-1}}\circ S_{\varphi_\C}(t)\circ R_{\varphi}u_0\big)(x)\big|
\le C_1\int_{\R^d}K_t(y)\,|\tilde{u}_0(x-y)|\,dy
\end{equation}
where
\[
K_t(y):=\frac{1}{|t|^{d/2}}\,e^{-M_\vartheta\frac{|y|^2}{|t|}}
\]
with a constant $M_\vartheta>0$ depending only on the choice of $\vartheta$.
It follows from \eqref{eq:u_0,f_I-tilde}, \eqref{eq:R-local_boundedness}, the Banach algebra property of 
$W^{m-1,p}_{\delta+1,\C}$, and Lemma \ref{lem:W-decay} and Lemma \ref{lem:W-multiplication} in 
Appendix \ref{sec:W-spaces}, that
\begin{equation}\label{eq:u_0-tilde}
\tilde{u}_0(x)= u_0(x)\det\big(\Id+i(\dd_{x}\tilde{f}_I)\big)\in L^p_{\gamma,\C}
\quad\text{and}\quad
\|\tilde{u}_0\|_{L^p_{\gamma,\C}}\le C_2\|u_0\|_{L^p_{\gamma,\C}}
\end{equation}
with a constant $C_2>0$ independent of the choice of $f_\C\in\U_\C(\varphi_0)$ and 
$u_0\in L^p_{\gamma,\C}$. We then obtain from Peetre's inequality that
\begin{align*}
\int_{\R^d}K_t(y)\,|\tilde{u}_0(x-y)|\,dy&\le
\int_{\R^d}\frac{\x^\gamma}{\1x-y\2^{\gamma}}K_t(y)\,\big(\1x-y\2^{\gamma}\tilde{u}_0(x-y)|\big)\,dy\nonumber\\
&\le 2^{|\gamma|/2}\int_{\R^d}\big(\y^{|\gamma|}K_t(y)\big)\,
\big(\1x-y\2^{\gamma}\tilde{u}_0(x-y)|\big)\,dy\,.
\end{align*}
This, together with \eqref{eq:convolution1}, \eqref{eq:u_0-tilde}, and Young's inequality about the boundedness
of the convolution map $L^1\times L^p\to L^p$, implies that
\begin{equation}\label{eq:convolution2}
\big\|R_{\varphi^{-1}}\circ S_{\varphi_\C}(t)\circ R_{\varphi}u_0\big\|_{L^p_{\gamma,\C}}
\le C_3\,\big\|K_{t,\gamma}\big\|_{L^1}\|u_0\|_{L^p_{\gamma,\C}},
\end{equation}
where
\[
K_{t,\gamma}(y):=\y^{|\gamma|}K_t(y),\quad y\in\R^d,
\]
and the constant $C_3>0$ is independent of the choice of $f_\C\in\U_\C(\varphi_0)$, $t\in\mathbb{S}_\vartheta$,
and $u_0\in L^p_{\gamma,\C}$. A direct computation shows that 
\[
\big\|K_{t,\gamma}\big\|_{L^1}\le C_4(1+|t|)^{|\gamma|/2}
\]
with a constant depending only on the choice of $\vartheta$, $\gamma$, and $d$.
This, together with \eqref{eq:convolution2} then implies that
\[
\big\|R_{\varphi^{-1}}\circ S_{\varphi_\C}(t)\circ R_{\varphi}u_0\big\|_{L^p_{\gamma,\C}}
\le C_5(1+|t|)^{|\gamma|/2}\|u_0\|_{L^p_{\gamma,\C}}
\]
with a constant independent of the choice of $f_\C\in\U_\C(\varphi_0)$, $t\in\mathbb{S}_\vartheta$,
and $u_0\in L^p_{\gamma,\C}$. Hence, \eqref{eq:bound_preliminary} follows.

Let us now prove that \eqref{eq:analyticity_preliminary} is weakly analytic. To this end, we fix
$\chi_1,\chi_2\in C^\infty_c$, $f_\C\in\U_\C(\varphi_0)$ and $h_\C\in W^{m,p}_{\delta,\C}$, 
and consider the map
\begin{equation}\label{eq:weak_analyticity}
\mathbb{S}_\vartheta\times\mathcal{O}(0)\to\C,\quad 
(t,z)\mapsto\int_{\R^d}\big(S_{\varphi_\C+z h_\C}(t)\chi_1\big)(x)\chi_2(x)\,dx,
\end{equation}
where $\mathcal{O}(0)$ is a sufficiently small open neighborhood of zero in $\C$.
In view of \eqref{eq:S(t)-congugatedC} we have that
\begin{align*}
&\int_{\R^d}\big(S_{\varphi_\C+z h_\C}(t)\chi_1\big)(x)\chi_2(x)\,dx=\\
&=\frac{1}{(4\pi t)^{d/2}}\int_{\R^d}\int_{\R^d}
e^{-\frac{((\varphi_\C(x)+z h_\C(x))-(\varphi_\C(y)+z h_\C(y))^2}{4 t}}
\det\big(\dd_y(\varphi_\C+z h_\C)\big)\chi_1(y)\chi_2(x)\,dy\,dx.
\end{align*}
Since this integral depends analytically on the parameters $z\in\mathcal{O}(0)$ and $t\in\mathbb{S}_\vartheta$
where the complex neighborhood of zero $\mathcal{O}(0)$ in $\C$ is chosen sufficiently small,
we conclude that the map \eqref{eq:weak_analyticity} is analytic. 
Recall that the space $C^\infty_c$ is dense in $L^p_\gamma$ as well as in its dual 
$\big(L^p_\gamma)^*=L^q_{-\gamma}$, $\frac{1}{p}+\frac{1}{q}=1$. 
Hence, the map \eqref{eq:analyticity_preliminary} is analytic since it is
weakly analytic and locally bounded (see e.g.\ Theorem 3.12  and the remark after it in \cite{Kato}).
\end{proof}

\subsection{A splitting in $W^{k,p}_\gamma$ for $-1<\gamma+\frac{d}{p}<0$}
Consider the space $W^{k,p}_\gamma$ where $-1<\gamma+\frac{d}{p}<0$ and $k\ge 0$.
It follows easily from the definition of the space $W^{k,p}_\gamma$ that the constants
(i.e. the functions on $\R^d$ with constant value) lie in $W^{k,p}_\gamma$. 
Denote by $\langle{\rm constants}\rangle$ the subspace in $W^{k,p}_\gamma$ consisting of all constants.

Take $\chi\in C^\infty_c$ such that $\chi\ge 0$ and $\int_{\R^d}\chi(x)\,dx=1$, and
consider the bounded linear functional $\alpha_\chi\in\big(W^{k,p}_\gamma\big)^*$,
\begin{equation}\label{eq:alpha}
\alpha_\chi(w):=\int_{\R^d}\chi(x) w(x)\,dx\,.
\end{equation}
We have the following splitting lemma.

\begin{Lem}\label{lem:constants_splitting}
Assume that $-1<\gamma+\frac{d}{p}<0$ and $k\ge 0$. Then
\begin{equation}\label{eq:constants_splitting}
W^{k,p}_\gamma=\langle{\rm constants}\rangle\oplus\Ker\alpha_\chi
\end{equation}
The subspaces in \eqref{eq:constants_splitting} are closed and the corresponding projections
are given by
\begin{equation}\label{eq:projections}
\Pi_c : W^{k,p}_\gamma\to\langle{\rm constants}\rangle,\quad w\mapsto\alpha_\chi(w),
\quad\text{\rm and}\quad
\Pi_o : W^{k,p}_\gamma\to\Ker\alpha_\chi,\quad w\mapsto w-\alpha_\chi(w)\,.
\end{equation}
\end{Lem}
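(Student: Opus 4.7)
The plan is to verify the splitting directly by exhibiting the two projections and checking that they are well-defined, bounded, complementary, and idempotent.

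The first preliminary step is to confirm that the one-dimensional subspace $\langle{\rm constants}\rangle$ actually sits inside $W^{k,p}_\gamma$. For a constant $c$ only the term with $|\alpha|=0$ contributes to \eqref{eq:W-norm}, so finiteness amounts to $\int_{\R^d}\x^{\gamma p}\,dx<\infty$, which holds precisely because $\gamma+\frac{d}{p}<0$ (equivalently $\gamma p<-d$). One can also check that such a constant is indeed in the closure of $C^\infty_c$ with respect to $\|\cdot\|_{W^{k,p}_\gamma}$ by a standard cutoff argument: if $\chi_n(x):=\chi(x/n)$ for a smooth radial $\chi$ equal to $1$ near the origin, then the $L^p$-integral with weight $\x^{(\gamma+|\alpha|)p}$ of each derivative $\partial^\alpha\chi_n$ is bounded by $C n^{\gamma p+d}\to 0$.

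The second step is to show that the functional $\alpha_\chi$ defined in \eqref{eq:alpha} is bounded on $W^{k,p}_\gamma$. Writing
\[
|\alpha_\chi(w)|=\Big|\int_{\R^d}\big(\chi(x)\x^{-\gamma}\big)\big(\x^{\gamma}w(x)\big)\,dx\Big|
\le\|\chi\x^{-\gamma}\|_{L^q}\,\|\x^{\gamma}w\|_{L^p},
\]
with $\tfrac{1}{p}+\tfrac{1}{q}=1$, the first factor is finite because $\chi\in C^\infty_c$, and the second factor is bounded by $\|w\|_{W^{k,p}_\gamma}$. Thus $\alpha_\chi\in\big(W^{k,p}_\gamma\big)^*$ and its kernel is closed; $\langle{\rm constants}\rangle$ is closed for trivial reasons of finite dimension.

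The third step is the algebraic splitting. The normalization $\int\chi\,dx=1$ gives $\alpha_\chi(1)=1$, so for any $w\in W^{k,p}_\gamma$ the decomposition $w=\alpha_\chi(w)\cdot 1+\big(w-\alpha_\chi(w)\cdot 1\big)$ has its first summand in $\langle{\rm constants}\rangle$ and its second summand annihilated by $\alpha_\chi$. Uniqueness follows by applying $\alpha_\chi$: if $c\cdot 1+v=0$ with $v\in\Ker\alpha_\chi$, then $c=\alpha_\chi(c\cdot 1+v)=0$ and hence $v=0$ as well. The maps $\Pi_c$ and $\Pi_o$ in \eqref{eq:projections} are manifestly bounded (being continuous linear combinations of $\alpha_\chi$ and the identity), idempotent, and sum to the identity, which identifies them as the projections associated with the decomposition \eqref{eq:constants_splitting}. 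There is no serious obstacle: the only non-cosmetic point is the compatibility of the constraint $\gamma+\frac{d}{p}<0$ with the inclusion of constants and with the boundedness of $\alpha_\chi$, which was arranged by hypothesis.
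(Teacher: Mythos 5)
Your proof is correct and takes essentially the same approach as the paper: the paper observes that constants lie in $W^{k,p}_\gamma$ because $\gamma+\frac{d}{p}<0$, defines $\alpha_\chi$, and then declares the splitting immediate from $\alpha_\chi(1)=1\neq 0$. You have simply filled in the (easy) details — membership of constants, boundedness of $\alpha_\chi$ via H\"older, and the algebraic check that the projections work — all of which are routine and correct.
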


The proof of the Lemma is immediate and follows from the fact that $\alpha_\chi(1)=1\ne 0$.
Note that the splitting \eqref{eq:constants_splitting} and the projections \eqref{eq:projections}
naturally extend to the corresponding complexified spaces.

Assume that $m>1+\frac{d}{p}$, $\delta+\frac{d}{p}>-1$, $-1<\gamma+\frac{d}{p}<0$, and $0\le k\le m$. 
Then, for any $\varphi\in\D^{m,p}_\delta$, $w\in W^{k,p}_\gamma$, and $t>0$, we have 
\begin{align*}
\Pi_c\big(R_\varphi\circ S(t)\circ R_{\varphi^{-1}}w\big)&=
\int_{\R^d}\chi(x)\big(R_\varphi\circ S(t)\circ R_{\varphi^{-1}}w\big)(x)\,dx\\
&=\int_{\R^d}\big(\chi\circ\varphi^{-1}\big)(y)\big(S(t)(w\circ\varphi^{-1})\big)(y)\,
\frac{1}{\det(\dd_x\varphi)|_{x=\varphi^{-1}(y)}}\,dy\\
&=\int_{\R^d}\Big(S(t)\big(\chi\circ\varphi^{-1}/\det(\dd_{\varphi^{-1}}\varphi\big)\Big)(y)
\big(w\circ\varphi^{-1}\big)(y)\,dy\\
&=\int_{\R^d}\Big(\big(R_\varphi\circ S(t)\circ R_{\varphi^{-1}}\big)\big(\chi/\det(\dd\varphi)\big)\Big)(x)\,
\det(\dd_x\varphi)\,w(x)\,dx\,.
\end{align*}
Consider the map
\begin{equation}\label{eq:conjugate_dual}
\D^{m,p}_\delta\to L^q_{-\gamma}\subseteq\big(W^{k,p}_\gamma\big)^*,\quad
\varphi\mapsto\Big(\big(R_\varphi\circ S(t)\circ R_{\varphi^{-1}}\big)\big(\chi/\det(\dd\varphi)\big)\Big)\det(\dd\varphi)
\end{equation}
where $\frac{1}{p}+\frac{1}{q}=1$ and the bounded embedding $L^q_{-\gamma}\subseteq\big(W^{k,p}_\gamma\big)^*$ is
given by H\"older's inequality. We have the following Lemma.

\begin{Lem}\label{lem:conjugate_dual}
Assume that $m>1+\frac{d}{p}$, $\delta+\frac{d}{p}>-1$, $-1<\gamma+\frac{d}{p}<0$, and $0\le k\le m$.
Then, for any given $\varphi_0=\id+f_0\in\D^{m,p}_\delta$ and $0<\vartheta<\pi/2$ there exists
an open neighborhood $\U_\C(\varphi_0)$ of $f_0\in W^{m,p}_{\delta,\C}$ so that the map
\eqref{eq:conjugate_dual} extends to an analytic map 
$\U_\C(\varphi_0)\times\mathbb{S}_\vartheta\to\big(W^{k,p}_{\gamma,\C}\big)^*$.
\end{Lem}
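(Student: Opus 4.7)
The plan is to simplify the formula in \eqref{eq:conjugate_dual} using the change of variables already implicit in its definition, and then to analyze the resulting integral transform with arguments parallel to those of Proposition \ref{prop:S(t)-conjugated_preliminary}, but now targeted at the space $L^q_{-\gamma,\C}$, which embeds continuously into $\big(W^{k,p}_{\gamma,\C}\big)^*$. A short computation shows that, for real $\varphi\in\D^{m,p}_\delta$, the factor $1/\det(\dd_y\varphi)$ inside $\chi/\det(\dd\varphi)$ cancels the Jacobian $\det(\dd_y\varphi)$ produced when $R_{\varphi^{-1}}$ is inverted inside the heat kernel, so that \eqref{eq:conjugate_dual} is represented pointwise by
\[
F(\varphi)(x)=\frac{\det(\dd_x\varphi)}{(4\pi t)^{d/2}}\int_{\R^d}e^{-(\varphi(x)-\varphi(y))^2/(4t)}\,\chi(y)\,dy.
\]
The candidate analytic extension $F(\varphi_\C)$ is obtained by replacing $\varphi$ with $\varphi_\C=\id+f_\C$, $f_\C\in\U_\C(\varphi_0)$, and allowing $t\in\mathbb{S}_\vartheta$.

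The first step is to establish a Gaussian decay estimate in $x$. Writing $\varphi_\C(x)-\varphi_\C(y)=(\varphi(x)-\varphi(y))+i(f_I(x)-f_I(y))$ and performing the real-part computation from the proof of Lemma \ref{lem:t-complex} directly (with $f_I$ playing the role of $\tilde f_I$, since here the $R_{\varphi^{-1}}$ reduction is not needed), one finds, after shrinking $\U_\C(\varphi_0)$ if necessary, that
\[
\big|e^{-(\varphi_\C(x)-\varphi_\C(y))^2/(4t)}\big|\le e^{-c\,|\varphi(x)-\varphi(y)|^2/|t|}
\]
for a constant $c=c(\vartheta)>0$ uniform over the shrunk neighborhood and $t\in\mathbb{S}_\vartheta$. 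Since $\supp\chi\subseteq B_R$ and since the embedding $W^{m,p}_\delta\subseteq C^0$ (with decay at infinity, by \eqref{eq:decay}) yields $|\varphi(x)-\varphi(y)|\ge\tfrac{1}{2}\x-C_R$ for $y\in B_R$ and $|x|$ large, while $\det(\dd_x\varphi_\C)$ remains uniformly bounded on $\R^d$ for $f_\C$ in a small neighborhood, we obtain a pointwise bound of the form
\[
|F(\varphi_\C)(x)|\le C\,(1+|t|)^{d/2}\,e^{-c\,\x^2/|t|}\quad\text{for }|x|\ge R_0
\]
and $|F(\varphi_\C)(x)|\le C$ for $|x|\le R_0$. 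Integrating against $\x^{-q\gamma}$ gives $F(\varphi_\C)\in L^q_{-\gamma,\C}$ with norm bounded by $C'(1+|t|)^M$ for some $M$, locally uniformly in $f_\C\in\U_\C(\varphi_0)$.

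For analyticity I would invoke the standard criterion that a locally bounded, weakly analytic map into a Banach space is analytic (cf.\ \cite[Theorem~3.12]{Kato}). Weak analyticity amounts to showing that for every $w\in C^\infty_c$ (dense in $L^p_\gamma$ and hence norm-determining on the embedded subspace $L^q_{-\gamma,\C}$) and every $h_\C\in W^{m,p}_{\delta,\C}$ the scalar function
\[
(z,t)\mapsto\int_{\R^d}\!\int_{\R^d}\frac{\det(\dd_x\varphi_{\C,z})}{(4\pi t)^{d/2}}\,e^{-(\varphi_{\C,z}(x)-\varphi_{\C,z}(y))^2/(4t)}\,\chi(y)\,w(x)\,dy\,dx,\quad \varphi_{\C,z}:=\varphi_\C+zh_\C,
\]
is jointly analytic on $\mathcal{O}(0)\times\mathbb{S}_\vartheta$. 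Its integrand is pointwise analytic in $(z,t)$, and the Gaussian estimate from the previous step, together with the compact support of $\chi$, furnishes an integrable majorant uniform on compact polydiscs; Morera's theorem applied in each variable then yields joint analyticity, completing the argument.

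The principal technical obstacle is the Gaussian decay bound itself: one must simultaneously absorb the purely imaginary perturbation $i(f_I(x)-f_I(y))$ in the phase (as in Lemma \ref{lem:t-complex}) and extract genuine decay in $x$ from the growth $\varphi(x)=x+f(x)$, using only the $C^1$ decay at infinity afforded by the weight condition $\delta+\tfrac{d}{p}>-1$ together with the compact support of $\chi$. Once this estimate is in place, the rest of the proof is a straightforward adaptation of the weak-analyticity argument used in Proposition \ref{prop:S(t)-conjugated_preliminary}.
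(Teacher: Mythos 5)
Your proposal is essentially correct but takes a genuinely different route from the paper. The paper's own proof is very short: it factors \eqref{eq:conjugate_dual} as the composition of $\varphi\mapsto\chi/\det(\dd\varphi)\in L^q_{-\gamma}$ with $(\varphi,w)\mapsto\det(\dd\varphi)\,\big(R_\varphi\circ S(t)\circ R_{\varphi^{-1}}w\big)$, then gets analyticity of the first factor from the Banach-algebra property of $W^{m-1,p}_{\delta+1}$ and of the second from Proposition \ref{prop:S(t)-conjugated_preliminary} applied in $L^q_{-\gamma,\C}$. You instead notice the algebraic cancellation $\det(\dd_y\varphi)\cdot\big(1/\det(\dd_y\varphi)\big)=1$ inside the integral, reducing the map to the explicit kernel $F(\varphi_\C)(x)=\det(\dd_x\varphi_\C)(4\pi t)^{-d/2}\int e^{-(\varphi_\C(x)-\varphi_\C(y))^2/(4t)}\chi(y)\,dy$, and you then estimate it directly. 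The paper's proof is shorter and reuses existing machinery; yours makes the structure more transparent, is more self-contained, and avoids having to re-read Proposition \ref{prop:S(t)-conjugated_preliminary} with $(p,\gamma)$ replaced by $(q,-\gamma)$, but requires re-deriving a Gaussian decay estimate in a slightly different regime where the real part of the displacement in the exponent is $\varphi(x)-\varphi(y)$ rather than the Euclidean displacement appearing in Lemma \ref{lem:t-complex}.

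Two small inaccuracies in your write-up. First, the uniform inequality $|f_I(x)-f_I(y)|\le\epsilon|\varphi(x)-\varphi(y)|$ needs the lower bound $|\varphi(x)-\varphi(y)|\ge c_0|x-y|$ with $c_0=\|\dd\varphi^{-1}\|_{L^\infty}^{-1}$, which is uniform over a neighborhood of $\varphi_0$ because $\D^{m,p}_\delta$ is a topological group (Theorem \ref{th:composition}(b)) and $\dd g\in W^{m-1,p}_{\delta+1}\subseteq L^\infty$ for $\varphi^{-1}=\id+g$; you cite only uniform bounds on $\det(\dd\varphi_\C)$, which controls invertibility pointwise but not the size of $(\dd\varphi)^{-1}$. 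Second, the $(4\pi t)^{-d/2}$ prefactor makes $\|F(\varphi_\C)\|_{L^q_{-\gamma}}$ blow up like $|t|^{-d(q-1)/(2q)}$ as $t\to 0^+$, so the bound $C'(1+|t|)^M$ with $M\ge 0$ is not right. Neither issue affects the conclusion, since $\mathbb{S}_\vartheta$ is open and excludes $t=0$, so only local boundedness is needed for the Kato weak-analyticity criterion.
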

\begin{proof}[Proof of Lemma \ref{lem:conjugate_dual}]
The map \eqref{eq:conjugate_dual} is a composition of the following maps
\begin{equation}\label{eq:conjugate_dual1}
\D^{m,p}_\delta\to L^q_{-\gamma},\quad\varphi\mapsto\chi/\det(\dd\varphi),
\end{equation}
and 
\begin{equation}\label{eq:conjugate_dual2}
\D^{m,p}_\delta\times L^q_{-\gamma}\to L^q_{-\gamma}\subseteq\big(W^{k,p}_{\gamma}\big)^*,\quad
(\varphi,w)\mapsto \det(\dd\varphi)\,\Big(R_\varphi\circ S(t)\circ R_{\varphi^{-1}}w\Big)\,.
\end{equation}
The analyticity of \eqref{eq:conjugate_dual1} follows easily from the Banach algebra property 
in $W^{m-1,p}_{\delta+1}$.
The analyticity of \eqref{eq:conjugate_dual2} follows from Proposition \ref{prop:S(t)-conjugated_preliminary}.
\end{proof}

As a corollary from Lemma \ref{lem:conjugate_dual} we obtain the analyticity of the evolution of the projection of
the heat flow onto the first component in \eqref{eq:constants_splitting}.

\begin{Prop}\label{prop:constants}
Assume that $m>1+\frac{d}{p}$, $\delta+\frac{d}{p}>-1$, $-1<\gamma+\frac{d}{p}<0$, $0\le k\le m$.
Then, for any given $\varphi_0=\id+f_0\in\D^{m,p}_\delta$ and $0<\vartheta<\pi/2$ there exists
an open neighborhood $\U_\C(\varphi_0)$ of $f_0\in W^{m,p}_{\delta,\C}$ so that the map
\[
(0,\infty)\times\D^{m,p}_\delta\to\big(W^{k,p}_\gamma\big)^*,\quad
(t,\varphi)\mapsto\Pi_c\circ\big(R_\varphi\circ S(t)\circ R_{\varphi^{-1}}\big),
\]
extends to an analytic map $\mathbb{S}_\vartheta\times\U_\C(\varphi_0)\to\big(W^{k,p}_{\gamma,\C}\big)^*$.
\end{Prop}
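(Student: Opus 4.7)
The strategy is to read the proposition off Lemma \ref{lem:conjugate_dual} by using the identification of $\Pi_c\circ(R_\varphi\circ S(t)\circ R_{\varphi^{-1}})$ with the $L^q_{-\gamma}$-function appearing in \eqref{eq:conjugate_dual}. That identification is already carried out in the displayed chain of equalities preceding Lemma \ref{lem:conjugate_dual}: for real $\varphi\in\D^{m,p}_\delta$, $t>0$, and $w\in W^{k,p}_\gamma$,
\[
\Pi_c\big(R_\varphi\circ S(t)\circ R_{\varphi^{-1}}w\big)=\int_{\R^d}F_\varphi(t,x)\,w(x)\,dx,
\]
with $F_\varphi(t,\cdot):=\big(R_\varphi\circ S(t)\circ R_{\varphi^{-1}}(\chi/\det(\dd\varphi))\big)\,\det(\dd\varphi)$, i.e.\ precisely the map of \eqref{eq:conjugate_dual}. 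Since the image of $\Pi_c$ is the one-dimensional subspace of constants, identifying a constant with its scalar value shows that $\Pi_c\circ(R_\varphi\circ S(t)\circ R_{\varphi^{-1}})$, viewed as an element of $(W^{k,p}_\gamma)^*$, is exactly $F_\varphi(t,\cdot)$ under the embedding $L^q_{-\gamma}\hookrightarrow(W^{k,p}_\gamma)^*$.

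To produce the complex extension, for $f_\C\in\U_\C(\varphi_0)$ and $t\in\mathbb{S}_\vartheta$ I would use the integral formula \eqref{eq:S(t)-congugatedC} to replace each occurrence of $R_\varphi\circ S(t)\circ R_{\varphi^{-1}}$ in the definition of $F_\varphi$ by its complex analog, producing a function $F_{\varphi_\C}(t,\cdot)$ and hence a functional $w\mapsto\int_{\R^d}F_{\varphi_\C}(t,x)w(x)\,dx$ in $(W^{k,p}_{\gamma,\C})^*$. Lemma \ref{lem:conjugate_dual} (after possibly shrinking $\U_\C(\varphi_0)$) gives the analyticity of this assignment on $\mathbb{S}_\vartheta\times\U_\C(\varphi_0)$, and by construction it agrees with $\Pi_c\circ(R_\varphi\circ S(t)\circ R_{\varphi^{-1}})$ on the real slice, so it is the required analytic extension.

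The only thing that needs a second glance is that the recipe for $F_{\varphi_\C}$ remains well-defined when $\varphi_\C$ ceases to be a diffeomorphism; the sole issue is the factor $1/\det(\dd\varphi_\C)$. This is however already addressed in the proof of Lemma \ref{lem:conjugate_dual}: the decay estimate \eqref{eq:decay} together with $\det(\dd\varphi_0)>0$ keeps $\det(\dd\varphi_\C)$ bounded away from zero on a sufficiently small complex neighborhood of $\varphi_0$, and the Banach algebra property of $W^{m-1,p}_{\delta+1,\C}$ then makes $\varphi_\C\mapsto\chi/\det(\dd\varphi_\C)$ analytic. Thus no obstacle beyond what is already handled by Lemma \ref{lem:conjugate_dual} arises, and the proposition follows.
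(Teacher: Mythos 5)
Your proof is correct and takes essentially the same route as the paper: the paper presents Proposition \ref{prop:constants} as a direct corollary of Lemma \ref{lem:conjugate_dual}, resting on the displayed identity showing that $\Pi_c\circ(R_\varphi\circ S(t)\circ R_{\varphi^{-1}})$ is represented by the $L^q_{-\gamma}$-function of \eqref{eq:conjugate_dual}, exactly as you argue. Your final paragraph on $1/\det(\dd\varphi_\C)$ is a helpful clarification of a point the paper delegates to the proof of Lemma \ref{lem:conjugate_dual}, but it is not a new ingredient.
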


\subsection{The bootstrapping argument}\label{sec:bootstrapping}
Assume that $m>1+\frac{d}{p}$, $\delta+\frac{d}{p}>-1$, and $\gamma\in\R$.
Then, by Lemma \ref{lem:conjugate_operators} in Appendix \ref{sec:W-spaces}, 
for any $0\le k\le m-2$ the map
\[
\D^{m,p}_\delta\times W^{k+2,p}_\gamma\to W^{k,p}_{\gamma+2},\quad
(\varphi,f)\mapsto\Delta_\varphi f:=R_\varphi\circ\Delta\circ R_{\varphi^{-1}}f\,,
\]
is well defined and analytic. Since the Laplace operator
\[
\Delta : W^{k+2,p}_\gamma\to W^{k,p}_{\gamma+2}
\]
commutes with the heat semigroup $\{S(t)\}_{t\ge 0}$ on $W^{k,p}_\gamma$,
for any $\varphi\in\D^{m,p}_\delta$, $0\le k\le m-2$, and $t>0$, we have
the commutative diagram
\begin{equation}\label{eq:diagram1}
\begin{tikzcd}
W^{k+2,p}_\gamma\arrow[d, swap, "\Delta_\varphi"]\arrow[r, "S_\varphi(t)"]&
W^{k+2,p}_\gamma\arrow[d, "\Delta_\varphi"]\\
W^{k,p}_{\gamma+2}\arrow[r, "S_\varphi(t)"]&W^{k,p}_{\gamma+2}
\end{tikzcd}
\end{equation}
where $S_\varphi(t)= R_\varphi\circ S(t)\circ R_{\varphi^{-1}}$.
Moreover, since by Lemma \ref{lem:continuity_composition_general} in Appendix \ref{sec:W-spaces}
the right translation 
\begin{equation}\label{eq:right_translation2}
R_\varphi : W^{l,p}_\gamma\to W^{l,p}_\gamma,\quad 0\le l\le m,\quad\gamma\in\R,
\end{equation}
is an isomorphism of Banach spaces, it follows from \cite{McOwen}, \cite[Section 3]{McOwenTopalov4},
that the conjugated Laplace operator
\begin{equation}\label{eq:Laplace_conjugated}
\Delta_\varphi : W^{k+2,p}_\gamma\to W^{k,p}_{\gamma+2},\quad 0\le k\le m-2,
\end{equation}
has the following properties:
\begin{itemize}
\item[(a)]  For $\gamma+\frac{d}{p}\in(-1,0)$ the map \eqref{eq:Laplace_conjugated} is onto with
$\Ker\Delta_\varphi$ consisting of the constant valued functions on $\R^d$;
\item[(b)]  For $\gamma+\frac{d}{p}\in(0,d-2)$, $d\ge 3$, the map \eqref{eq:Laplace_conjugated} is an isomorphism;
\item[(c)]  For $\gamma+\frac{d}{p}\in(\ell,\ell+1)$, $\ell\ge d-2$, the map \eqref{eq:Laplace_conjugated}
is injective with closed range,
\begin{equation}\label{eq:range_identity1}
\Range\big(\Delta_\varphi\big)= R_{\varphi}\big(\Range\Delta\big),
\end{equation}
of co-dimension $\ell-d+3$;
\end{itemize}
Let us now fix $\varphi_0=\id+f_0\in\D^{m,p}_\delta$ and assume that $\gamma+\frac{d}{p}>-1$, 
$\gamma+\frac{d}{p}\notin\Z$. Then, we have that the range of
\eqref{eq:Laplace_conjugated} with $\varphi$ replaced by $\varphi_0$ has a finite co-kernel. 
Let $C^{k,p}_{\gamma+2,\varphi_0}$ be a (finite dimensional) closed complement of $\Range\big(\Delta_{\varphi_0}\big)$ in 
$W^{k,p}_{\gamma+2}$, i.e.
\begin{equation}\label{eq:range_identity2}
W^{k,p}_{\gamma+2}=C^{k,p}_{\gamma+2,\varphi_0}\oplus\Range\big(\Delta_{\varphi_0}\big).
\end{equation}
Since $\D^{m,p}_\delta$ is a topological group and since the right translation \eqref{eq:right_translation2} is continuous, 
one easily obtains from \eqref{eq:range_identity2} and the fact that
$\Range\big(\Delta_\varphi\big)=R_{\widetilde{\varphi}}\big(\Range\big(\Delta_{\varphi_0}\big)\big)$
where $\widetilde{\varphi}=\varphi_0^{-1}\circ\varphi$ (see \eqref{eq:range_identity1})
that there exists an open neighborhood $U(\varphi_0)$ of $f_0$ in $W^{k,p}_\delta$ such that 
\begin{equation}\label{eq:range_identity3}
W^{k,p}_{\gamma+2}=C^{k,p}_{\gamma+2,\varphi_0}\oplus\Range\big(\Delta_\varphi\big).
\end{equation}
for any $\varphi=\id+f\in\D^{m,p}_\delta$ with $f\in U(\varphi_0)$. 
The splitting \eqref{eq:range_identity3} allows us to identify the factor space
$W^{k,p}_{\gamma+2}/C^{k,p}_{\gamma+2,\varphi_0}$ with $\Range\big(\Delta_\varphi\big)$. 
It follows from the properties (a), (b), and (c) above that
\begin{equation}\label{eq:kernel}
\Ker\Delta_\varphi=\Ker\Delta_{\varphi_0}=\left\{
\begin{array}{cc}
\langle{\rm constants}\rangle,&\quad-1<\gamma+\frac{d}{p}<0,\\
\{0\},\quad&\quad\gamma+\frac{d}{p}>0.
\end{array}
\right.
\end{equation}
For simplicity of notation, denote again by $\Delta_\varphi$ the induced by \eqref{eq:Laplace_conjugated} map 
$W^{k+2,p}_\gamma\big/\Ker\Delta_{\varphi_0}\to W^{k,p}_{\gamma+2}$ and 
let $\iota : W^{k,p}_{\gamma+2}\to W^{k,p}_{\gamma+2}\big/C^{k,p}_{\gamma+2,\varphi_0}$
be the natural projection onto the factor space.
Then, we obtain from \eqref{eq:range_identity3} and the open mapping theorem that
for any $\varphi=\id+f\in\D^{m,p}_\gamma$ with $f\in U(\varphi_0)$ the map
\begin{equation}\label{eq:range_identity4}
{\widetilde\Delta}_{\varphi} : W^{k+2,p}_\gamma\big/\Ker\Delta_{\varphi_0}\to 
W^{k,p}_{\gamma+2}\big/C^{k,p}_{\gamma+2,\varphi_0},\quad {\widetilde\Delta}_{\varphi}:=\iota\circ\Delta_\varphi,
\end{equation}
is an isomorphism of Banach spaces.
(Note that the spaces in \eqref{eq:range_identity4} do not depend on the choice of $\varphi$.)
In particular, the inverse of \eqref{eq:range_identity4} is well defined.
Moreover, by Lemma \ref{lem:conjugate_operators} in Appendix \ref{sec:W-spaces}, the map
\[
U(\varphi_0)\to
\LL\Big(W^{k+2,p}_\gamma\big/\Ker\Delta_{\varphi_0},W^{k,p}_{\gamma+2}\big/C^{k,p}_{\gamma+2,\varphi_0}\Big),
\quad\varphi\mapsto\widetilde{\Delta}_\varphi,\quad
\]
is analytic. Hence, the map
\begin{equation}\label{eq:range_identity5}
U(\varphi_0)\to
\LL\Big(W^{k,p}_{\gamma+2}\big/C^{k,p}_{\gamma+2,\varphi_0},W^{k+2,p}_\gamma\big/\Ker\Delta_{\varphi_0}\Big),
\quad\varphi\mapsto\widetilde{\Delta}_\varphi^{-1},\quad
\end{equation}
is also analytic. (This follows easily by using Neumann series.)
Hence, for any given $\varphi=\id+f\in\D^{m,p}_\gamma$ with $f\in U(\varphi_0)$, we can amend 
the commutative diagram \eqref{eq:diagram1} and obtain 
the commutative diagram\footnote{Note that $S_\varphi(t) 1=1$ for $t\ge 0$ and $f\in U(\varphi_0)$.}
\begin{equation}\label{eq:diagram2}
\begin{tikzcd}
W^{k+2,p}_\gamma\big/\Ker\Delta_{\varphi_0}
\arrow[d, swap, "\Delta_\varphi"]\arrow[r, "\widetilde{S}_\varphi(t)"]\arrow[r]&
W^{k+2,p}_\gamma\big/\Ker\Delta_{\varphi_0}\arrow[d, swap, "\Delta_\varphi"]\\
W^{k,p}_{\gamma+2}\arrow[r, "S_\varphi(t)"]&W^{k,p}_{\gamma+2}\arrow[r, "\iota"]&
W^{k,p}_{\gamma+2}\big/C^{k,p}_{\gamma+2,\varphi_0}
\arrow[ul, swap,"\widetilde{\Delta}_\varphi^{-1}"]
\end{tikzcd}
\end{equation}
where 
\begin{equation}\label{eq:widetildeS}
\widetilde{S}_\varphi(t) : 
W^{k+2,p}_\gamma\big/\Ker\Delta_{\varphi_0}\to W^{k+2,p}_\gamma\big/\Ker\Delta_{\varphi_0}
\end{equation}
denotes the natural action of $S_\varphi(t)$ on the factor space $W^{k+2,p}_\gamma\big/\Ker\Delta_{\varphi_0}$.
Note that the maps \eqref{eq:range_identity5} and
\[
U(\varphi_0)\to\LL\Big(W^{k+2,p}_\gamma\big/\Ker\Delta_{\varphi_0},W^{k,p}_{\gamma+2}\Big),\quad
\varphi\mapsto\Delta_\varphi,
\]
appearing as vertical arrows in \eqref{eq:diagram2} are analytic. This implies the following Lemma.

\begin{Lem}\label{lem:induction_lemma}
Let $m>1+\frac{d}{p}$, $\delta+\frac{d}{p}>-1$, $\gamma+\frac{d}{p}>-1$, $\gamma+\frac{d}{p}\notin\Z$, 
and $0\le k\le m-2$. Assume that for a given $\varphi_0=\id+f_0\in\D^{m,p}_\delta$ and $0<\vartheta<\pi/2$ there exists
an open neighborhood $\U_{\C}(\varphi_0)$ of $f_0$ in $W^{m,p}_\delta$ so that the conjugated heat flow
extends to an analytic map $\mathbb{S}_\vartheta\times\U_{\C}(\varphi_0)\to\LL\big(W^{k,p}_{\gamma+2,\C}\big)$, 
$(t,f_\C)\mapsto S_{\varphi_\C}(t)$, $\varphi_\C=\id+f_\C$, so that
$\big\|S_{\varphi_\C}(t)\big\|_{\LL(W^{k,p}_{\gamma+2,\C})}\le C\big(1+|t|\big)^{|\gamma|/2}$
with a constant $C>0$ independent of the choice of $f_\C\in\U_{\C}(\varphi_0)$ and $t\in\mathbb{S}_\vartheta$. 
Then, the map \eqref{eq:widetildeS} extends to an analytic map 
\begin{equation*}
\U_\C(\varphi_0)\to\LL\big(W^{k+2,p}_{\gamma,\C}/\Ker\Delta_{\varphi_0}\big),\quad
\varphi_\C\mapsto\widetilde{S}_{\varphi_\C}(t),
\end{equation*}
where the neighborhood $\U_{\C}(\varphi_0)$ is possibly shrunk, so that
\begin{equation}\label{eq:bound_tilde}
\big\|\widetilde{S}_{\varphi_\C}(t)\big\|_{\LL(W^{k+2,p}_{\gamma,\C}/\Ker\Delta_{\varphi_0})}\le
C\big(1+|t|\big)^{|\gamma|/2}
\end{equation}
with a constant $C>0$ independent of the choice of $f_\C\in\U_{\C}(\varphi_0)$ and $t\in\mathbb{S}_\vartheta$. 
\end{Lem}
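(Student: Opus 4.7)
The plan is to invert the commutative diagram \eqref{eq:diagram2}, expressing
\[
\widetilde{S}_\varphi(t) \;=\; \widetilde{\Delta}_\varphi^{-1}\circ\iota\circ S_\varphi(t)\circ\Delta_\varphi
\]
as an identity between operators on $W^{k+2,p}_\gamma/\Ker\Delta_{\varphi_0}$, and then to show that each factor admits an analytic extension to a complex neighborhood of $\varphi_0$ with bounds independent of $t$ for every factor other than $S_\varphi(t)$ itself.

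First I would extend all constituents of the diagram to the complex setting. By Lemma \ref{lem:conjugate_operators} in the appendix, $\varphi_\C\mapsto\Delta_{\varphi_\C}$ extends to an analytic map $\U_\C(\varphi_0)\to\LL\bigl(W^{k+2,p}_{\gamma,\C},W^{k,p}_{\gamma+2,\C}\bigr)$. Since $\Delta_{\varphi_\C}$ kills constants regardless of the (possibly complex) parameter, it descends to the quotient and the map $\varphi_\C\mapsto\Delta_{\varphi_\C}\colon W^{k+2,p}_{\gamma,\C}/\Ker\Delta_{\varphi_0}\to W^{k,p}_{\gamma+2,\C}$ is again analytic. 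Next I would complexify the finite-dimensional complement $C^{k,p}_{\gamma+2,\varphi_0}$ appearing in \eqref{eq:range_identity3} and observe that invertibility of $\widetilde{\Delta}_{\varphi_\C}$ is an open condition; since $\widetilde{\Delta}_{\varphi_0}$ is an isomorphism, a Neumann series argument as in \eqref{eq:range_identity5} shows that $\varphi_\C\mapsto\widetilde{\Delta}_{\varphi_\C}^{-1}$ is analytic on a possibly smaller complex neighborhood. The hypothesis supplies the analytic extension of $S_{\varphi_\C}(t)$, so composing the four analytic factors yields an analytic extension of $\widetilde{S}_{\varphi_\C}(t)$.

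For the bound, after shrinking $\U_\C(\varphi_0)$ once more I would get uniform $t$-independent estimates $\|\Delta_{\varphi_\C}\|\le C_1$, $\|\widetilde{\Delta}_{\varphi_\C}^{-1}\|\le C_2$, and $\|\iota\|\le 1$, so that
\[
\bigl\|\widetilde{S}_{\varphi_\C}(t)\bigr\|_{\LL(W^{k+2,p}_{\gamma,\C}/\Ker\Delta_{\varphi_0})}
\le C_1 C_2\,\bigl\|S_{\varphi_\C}(t)\bigr\|_{\LL(W^{k,p}_{\gamma+2,\C})}
\le C\,(1+|t|)^{|\gamma|/2}
\]
with $C$ independent of $t\in\mathbb{S}_\vartheta$ and $f_\C\in\U_\C(\varphi_0)$, which is exactly \eqref{eq:bound_tilde}.

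The main obstacle, as I see it, is verifying that the splitting \eqref{eq:range_identity3} and the isomorphism property of $\widetilde{\Delta}_\varphi$ persist under complexification of the parameter. The real case is handled in the text using continuity of right translation and the open mapping theorem, and the complex version should follow by the same scheme once one notes that $\widetilde{\Delta}_{\varphi_0}$ acts as an isomorphism between the complexifications of the spaces on which it was invertible in the real category, and then applies Neumann series for nearby complex parameters. Everything beyond this is the routine composition of analytic maps with uniform operator norm estimates.
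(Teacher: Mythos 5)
Your proposal follows exactly the paper's argument: you invert the commutative diagram \eqref{eq:diagram2} to write $\widetilde{S}_{\varphi_\C}(t)=\widetilde{\Delta}_{\varphi_\C}^{-1}\circ\iota\circ S_{\varphi_\C}(t)\circ\Delta_{\varphi_\C}$ and then compose the analyticity of the vertical arrows (from Lemma \ref{lem:conjugate_operators} and the Neumann-series argument around \eqref{eq:range_identity5}) with the hypothesized analyticity and bound on $S_{\varphi_\C}(t)$. This is precisely what the paper asserts, so the proposal is correct and takes essentially the same route.
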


\noindent The Lemma follows directly from the commutative diagram \eqref{eq:diagram2} and the analyticity of the maps
represented by the vertical arrows in the diagram.

\medskip

We will now use an induction argument involving Lemma \ref{lem:induction_lemma} and 
Proposition \ref{prop:S(t)-conjugated_preliminary}, and an interpolation result on $W$-spaces 
due to Triebel (see Lemma \ref{lem:interpolation} in Appendix A, \cite{Triebel}), 
to prove the main result in this Section.

\begin{Prop}\label{prop:S(t)-conjugated}
Assume that $m>1+\frac{d}{p}$, $\delta+\frac{d}{p}>-1$, and $\gamma+\frac{d}{p}>-1$. 
Then, for any given $\varphi_0=\id+f_0\in\D^{m,p}_\delta$ and $0<\vartheta<\pi/2$ there exists
an open neighborhood $\U_\C(\varphi_0)$ of $f_0$ in $W^{m,p}_{\delta,\C}$ so that 
for any $0\le k\le m$ the map
\begin{equation}\label{eq:analyticity_general}
\mathbb{S}_\vartheta\times\U_\C(\varphi_0)\to\LL\big(W^{k,p}_{\gamma,\C}\big),\quad
(t,f_\C)\mapsto S_{\varphi_\C}(t),
\end{equation}
is analytic. Moreover, there exists a constant $C>0$ such that
\begin{equation}\label{eq:bound_general}
\big\|S_{\varphi_\C}(t)\big\|_{\LL(W^{k,p}_{\gamma,\C})}\le C\big(1+|t|\big)^{|\gamma|/2}
\end{equation}
for any $0\le k\le m$, $f_\C\in\U_\C(\varphi_0)$, and $t\in\mathbb{S}_\vartheta$.
\end{Prop}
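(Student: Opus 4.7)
The argument proceeds by induction on the regularity index, combining the base case $k=0$ from Proposition \ref{prop:S(t)-conjugated_preliminary}, the two-step bootstrap from $k$ to $k+2$ supplied by Lemma \ref{lem:induction_lemma}, and Triebel's interpolation theorem (Lemma \ref{lem:interpolation}) to fill in the odd values of $k$ together with the integer values of $\gamma+\frac{d}{p}$ excluded from the bootstrap. Since Proposition \ref{prop:S(t)-conjugated_preliminary} applies at every weight $\gamma'\in\R$ and only finitely many weights (essentially $\gamma,\gamma+2,\ldots,\gamma+m$) enter the argument, one intersects the corresponding open neighborhoods of $f_0$ to obtain a single $\U_\C(\varphi_0)$ valid at every stage. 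At each inductive step the neighborhood may be shrunk, but only finitely often, so the final neighborhood is still open.

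For the bootstrap, assume the conclusion at regularity $k$ for all admissible weights, and fix $\gamma$ with $\gamma+\frac{d}{p}>-1$ and $\gamma+\frac{d}{p}\notin\Z$. Feeding the induction hypothesis at weight $\gamma+2$ into Lemma \ref{lem:induction_lemma} yields analyticity of $\widetilde{S}_{\varphi_\C}(t)$ on the quotient $W^{k+2,p}_{\gamma,\C}/\Ker\Delta_{\varphi_0}$ together with the bound \eqref{eq:bound_tilde}. When $\gamma+\frac{d}{p}>0$ the kernel is trivial by \eqref{eq:kernel}, and the quotient coincides with $W^{k+2,p}_{\gamma,\C}$. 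When $-1<\gamma+\frac{d}{p}<0$ the kernel equals $\langle{\rm constants}\rangle$; since $S_\varphi(t)$ fixes constants (see the footnote to \eqref{eq:diagram2}), the splitting
\[
W^{k+2,p}_{\gamma,\C}=\langle{\rm constants}\rangle_\C\oplus\Ker\alpha_\chi
\]
of Lemma \ref{lem:constants_splitting} is $S_\varphi(t)$-invariant, the identity action on constants trivially meets the bound, and Proposition \ref{prop:constants} supplies analyticity of the projection $\Pi_c\circ S_\varphi(t)$. These two pieces combine to transfer the analyticity and temporal bound from the quotient to all of $W^{k+2,p}_{\gamma,\C}$.

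Finally, Triebel's theorem identifies the intermediate spaces of the scale $W^{k,p}_{\gamma,\C}$ in both the regularity and weight parameters, and an application of interpolation to the uniformly bounded analytic family $(t,f_\C)\mapsto S_{\varphi_\C}(t)$ upgrades the analyticity and the bound $(1+|t|)^{|\gamma|/2}$ to every intermediate space, simultaneously producing the odd values $0\le k\le m$ sandwiched between two even ones and the integer values of $\gamma+\frac{d}{p}$ excluded from the bootstrap; the continuity of $|\gamma|/2$ in $\gamma$ makes the resulting temporal factor consistent with the claim. The chief obstacle is the bookkeeping in the regime $-1<\gamma+\frac{d}{p}<0$: one must verify that the constants-splitting is $S_\varphi(t)$-invariant and that both components depend analytically on $\varphi_\C$ and $t$ uniformly in the parameters, which is precisely what Proposition \ref{prop:constants} is designed to supply. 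Once this is in place, the induction closes at the claimed regularity $m$.
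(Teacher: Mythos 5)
Your overall strategy matches the paper's: base case $k=0$ via Proposition~\ref{prop:S(t)-conjugated_preliminary}, climb two steps at a time via Lemma~\ref{lem:induction_lemma}, patch the $-1<\gamma+\frac{d}{p}<0$ regime with the constants-splitting and Proposition~\ref{prop:constants}, and use Lemma~\ref{lem:interpolation} for the remaining regularity and weight values. The bookkeeping about finitely many weights and finitely many shrinkages of $\U_\C(\varphi_0)$ is also correct.

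However, there is a gap at the top of the scale when $m$ is odd. Your bootstrap starting at $k=0$ reaches only even $k$, namely $k=0,2,\dots,m-1$, and your interpolation step explicitly claims to recover ``the odd values $0\le k\le m$ sandwiched between two even ones.'' When $m$ is odd, $k=m$ is not sandwiched between two even regularities in the admissible range $0\le k\le m$: the interpolation endpoint $W^{m+1,p}_{\gamma,\C}$ lies outside the scale covered by the hypotheses, and Lemma~\ref{lem:induction_lemma} itself only applies for $0\le k\le m-2$, so it cannot push you to $k=m$ from $k=m-2$ when the latter is odd and hence not yet established. The paper closes this by first interpolating between $W^{0,p}_{\gamma,\C}$ and $W^{2,p}_{\gamma,\C}$ to obtain $k=1$, then running Lemma~\ref{lem:induction_lemma} a second time along the odd chain $k=1,3,\dots,m$; you need this extra pass, or some equivalent device, to reach $k=m$ for odd $m$.

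A smaller imprecision: you assert that the splitting $W^{k+2,p}_{\gamma,\C}=\langle{\rm constants}\rangle_\C\oplus\Ker\alpha_\chi$ is $S_\varphi(t)$-invariant because $S_\varphi(t)$ fixes constants. Fixing constants only shows that the \emph{first} summand is invariant; $\Ker\alpha_\chi$ is in general not preserved by $S_\varphi(t)$, which is precisely why the diagram~\eqref{eq:diagram3} involves $\Pi_o\circ S_{\varphi_\C}(t)=\widetilde{S}^0_{\varphi_\C}(t)\circ\Pi_o$ rather than a genuine block decomposition. The correct decomposition is $S_\varphi(t)=\Pi_c\circ S_\varphi(t)+\Pi_o\circ S_\varphi(t)$, where the first summand is handled by Proposition~\ref{prop:constants} (as you say) and the second by the quotient estimate; since you do invoke both ingredients, this is a slip in the narration rather than a fatal error, but the ``invariant splitting'' framing should be dropped.
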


\begin{Rem}
In fact, Proposition \ref{prop:S(t)-conjugated} holds for any $\gamma\in\R$.
The proof follows the lines of the proof of Proposition \ref{prop:S(t)-conjugated} below
and uses the fact that for $-\ell-1<\gamma+\frac{d}{p}<-\ell$, $\ell\in\Z_{\ge 0}$,
the map \eqref{eq:Laplace_conjugated} is onto with kernel consisting of harmonic polynomials on 
$\R^d$ of degree $\le\ell$ (cf. \cite{McOwen}). 
Since we will not need this more general result, we omit the details.
\end{Rem}

\begin{proof}[Proof of Proposition \ref{prop:S(t)-conjugated}]
Assume that $m>1+\frac{d}{p}$, $\delta+\frac{d}{p}>-1$, and $\gamma+\frac{d}{p}>-1$.
Take $\varphi_0=\id+f_0\in\D^{m,p}_\delta$ and $0<\vartheta<\pi/2$.
Let us first consider the case when $m$ is {\em even} and $\gamma+\frac{d}{p}\notin\Z$.
We will argue by induction in $k\ge 0$ and interpolation. 
For $k=0$ the Proposition follows from Proposition \ref{prop:S(t)-conjugated_preliminary}.
Then, we apply Lemma \ref{lem:induction_lemma} with $k=0$ to obtain that there exists
an open neighborhood $\U_\C(\varphi_0)$ of $f_0$ in $W^{m,p}_{\delta,\C}$ such that
\begin{equation}\label{eq:analyticity_k=2}
\mathbb{S}_\vartheta\times\U_\C(\varphi_0)\to\LL\big(W^{2,p}_{\gamma,\C}\big/\Ker\Delta_{\varphi_0}\big),\quad
(t,f_\C)\mapsto\widetilde{S}_{\varphi_\C}(t),
\end{equation}
is analytic and that the estimate \eqref{eq:bound_tilde} holds for $k=2$. 
If $\gamma+\frac{d}{p}>0$ then $\Ker\Delta_{\varphi_0}=\{0\}$ (cf. \eqref{eq:kernel}), and hence,
the map
\begin{equation}\label{eq:analyticity_k=2'}
\mathbb{S}_\vartheta\times\U_\C(\varphi_0)\to\LL\big(W^{2,p}_{\gamma,\C}),\quad
(t,f_\C)\mapsto S_{\varphi_\C}(t),
\end{equation}
is analytic and the estimate \eqref{eq:bound_general} holds for $k=2$. 
If $-1<\gamma+\frac{d}{p}<0$ the analyticity of \eqref{eq:analyticity_k=2'} follows from
Proposition \ref{prop:constants} and the fact that $\Ker\Delta_{\varphi_0}=\langle\rm constants\rangle$.
More specifically, in this case we argue as follows: The splitting \eqref{eq:constants_splitting} in 
Lemma \ref{lem:constants_splitting} gives a splitting of the corresponding complexified spaces
\begin{equation}\label{lem:constants_splittingC}
W^{2,p}_{\gamma,\C}=\langle{\rm constants}\rangle_\C\oplus\Ker\alpha_\chi
\end{equation}
where $\alpha_\chi\in\big(W^{2,p}_{\gamma,\C}\big)^*$ is the complex linear functional given by \eqref{eq:alpha}.
For any given $\varphi_\C=\id+f_\C$ with $f_\C\in\U_\C(\varphi_0)$ and $t\in\mathbb{S}_\vartheta$ we have
the commutative diagram
\begin{equation}\label{eq:diagram3}
\begin{tikzcd}
W^{2,p}_{\gamma,\C}\arrow[r]\arrow[rd, swap, "\Pi_o"]\arrow[rrr, bend left, "S_{\varphi_\C}(t)"]&
W^{2,p}_{\gamma,\C}\big/\langle{\rm constants}\rangle_\C
\arrow[d, swap, "\widetilde{\Pi}_o"]\arrow[r, "\widetilde{S}_{\varphi_\C}(t)"]&
W^{2,p}_{\gamma,\C}\big/\langle{\rm constants}\rangle_\C\arrow[d, "\widetilde{\Pi}_o"]&
W^{2,p}_{\gamma,\C}\arrow[l]\arrow[ld, "\Pi_o"]\\
&\Ker\alpha_\chi\arrow[r, "\widetilde{S}_{\varphi_\C}^0(t)"]&\Ker\alpha_\chi&
\end{tikzcd}
\end{equation}
where $\widetilde{S}_{\varphi_\C}^0(t):=\widetilde{\Pi}_o\circ\widetilde{S}_{\varphi_\C}(t)\circ\widetilde{\Pi}_o^{-1}$ and
$\widetilde{\Pi}_o : W^{2,p}_{\gamma,\C}\big/\langle{\rm constants}\rangle\to\Ker\alpha_\chi$ is
the isomorphisms induced by the projection $\Pi_o$ onto the second component in \eqref{lem:constants_splittingC} 
(cf. \eqref{eq:projections}). In view of the commutative diagram \eqref{eq:diagram3}, we have that
\[
\Pi_o\circ S_{\varphi_\C}(t)=\widetilde{S}_{\varphi_\C}^0(t)\circ\Pi_o\,.
\]
It now follows from the analyticity of the map \eqref{eq:analyticity_k=2} and the fact that
$\widetilde{S}_{\varphi_\C}^0(t)=\widetilde{\Pi}_o\circ\widetilde{S}_{\varphi_\C}(t)\circ\widetilde{\Pi}_o^{-1}$
(cf. \eqref{eq:diagram3}) that the map
\begin{equation}\label{eq:analyticity_k=2''}
\mathbb{S}_\vartheta\times\U_\C(\varphi_0)\to\LL\big(W^{2,p}_{\gamma,\C},\Ker\alpha_\chi\big),\quad
(t,f_\C)\mapsto\Pi_o\circ S_{\varphi_\C}(t)=\widetilde{S}_{\varphi_\C}^0(t)\circ\Pi_o,
\end{equation}
is analytic. The analyticity of the map
\begin{equation}\label{eq:analyticity_k=2'''}
\mathbb{S}_\vartheta\times\U_\C(\varphi_0)\to\LL\big(W^{2,p}_{\gamma,\C},\langle{\rm constants}\rangle_\C\big),\quad
(t,f_\C)\mapsto\Pi_c\circ S_{\varphi_\C}(t),
\end{equation}
where $\Pi_c$ is the projection onto the first component in \eqref{lem:constants_splittingC}, follows from 
Proposition \ref{prop:constants}. (If necessary, we shrink the neighborhood $\U_\C(\varphi_0)$ one more time so that 
Proposition \ref{prop:constants} can be applied.)
The analyticity of the map \eqref{eq:analyticity_k=2'} then follows from \eqref{eq:analyticity_k=2''}, \eqref{eq:analyticity_k=2'''},
and \eqref{lem:constants_splittingC}.
The estimate \eqref{eq:bound_general} for $k=2$ follows from the estimate \eqref{eq:bound_tilde} for $k=0$.
By continuing the induction process, we prove Proposition \ref{prop:S(t)-conjugated} for even values of $m$ and $k$ with 
$0\le k\le m$ and for $\gamma+\frac{d}{p}>-1$ with $\gamma+\frac{d}{p}\notin\Z$.
For $m$ even and $k$ odd with $0\le k\le m$, we interpolate between the spaces 
$W^{k-1,p}_{\gamma,\C}$ and $W^{k+1,p}_{\gamma,\C}$ to conclude that \eqref{eq:analyticity_general} and 
\eqref{eq:bound_general} hold for $\gamma+\frac{d}{p}\notin\Z$ 
(see Lemma \ref{lem:interpolation}\,(i) in Appendix \ref{sec:W-spaces}).

In the case when $m>1+\frac{d}{p}$ is odd and $\gamma+\frac{d}{p}\notin\Z$, we argue as follows: 
First, we use the arguments above to conclude that \eqref{eq:analyticity_general} is analytic and 
that \eqref{eq:bound_general} holds for $0\le k<m$ even. We then interpolate between $W^{0,p}_{\gamma,\C}$
and $W^{2,p}_{\gamma,\C}$ to obtain that \eqref{eq:analyticity_general} and \eqref{eq:bound_general}
hold for $k=1$. Then, we use \eqref{eq:diagram2} and the induction argument above starting with $k=1$ to prove
that the Proposition holds also for $0\le k\le m$, $k$ odd.

Finally, the case when $\gamma+\frac{d}{p}\in\Z$ is treated by interpolation between
$W^{k,p}_{\gamma_1,\C}$ and $W^{k,p}_{\gamma_2,\C}$ where $\gamma_1<\gamma<\gamma_2$ 
and $\gamma_1$ and $\gamma_2$ are chosen so that $\gamma_j+\frac{d}{p}>-1$ and
$\gamma_j+\frac{d}{p}\notin\Z$, $j=1,2$ (see Lemma \ref{lem:interpolation}\,(ii) in Appendix \ref{sec:W-spaces}).
\end{proof}

\medskip

In view of Proposition \ref{prop:S(t)-conjugated}, consider for any $t>0$ and $\ell\ge 0$ 
the $\ell$-th differential in the direction of the first variable 
\[
\dd_\varphi^\ell\big(S_{(\cdot)}(t)\big)\in\LL\big(\big(W^{m,p}_\gamma\big)^\ell,\LL\big(W^{m,p}_\gamma\big)\big)
\]
of the map
\begin{equation}\label{eq:S(t)-conjugated_real}
(0,\infty)\times\D^{m,p}_\delta\to\LL\big(W^{m,p}_\gamma\big),\quad
(t,\varphi)\mapsto S_\varphi(t).
\end{equation}
As a corollary from Proposition \ref{prop:S(t)-conjugated} we obtain the following result.

\begin{Prop}\label{prop:S(t)-conjugated_derivatives}
Assume that $m>1+\frac{d}{p}$, $\delta+\frac{d}{p}>-1$, and $\gamma+\frac{d}{p}>-1$. 
Then, the map \eqref{eq:S(t)-conjugated_real} is analytic.
Moreover, for any given $\varphi_0=\id+f_0\in\D^{m,p}_\delta$ and
for any $\ell_0\ge 1$ there exist an open neighborhood $U(\varphi_0)$ of $f_0$ in $W^{m,p}_\delta$
and a positive constant $C>0$ such that for any $0\le\ell\le\ell_0$,
\[
\big\|\dd_\varphi^\ell\big(S_{(\cdot)}(t)\big)\big\|_{\LL((W^{m,p}_\gamma)^\ell,\LL(W^{m,p}_\gamma))}\le 
C\,\big(1+|t|\big)^{|\gamma|/2},
\]
for any $\varphi=\id+f\in\D^{m,p}_\delta$ with $f\in U(\varphi_0)$ and
for any $t\in[0,\infty)$.\footnote{Note that $S(0)$ is the identity map on $W^{m,p}_\gamma$.}
\end{Prop}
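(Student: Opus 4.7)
The plan is to deduce Proposition \ref{prop:S(t)-conjugated_derivatives} as a direct corollary of Proposition \ref{prop:S(t)-conjugated} via Cauchy's integral formula for Banach-space-valued holomorphic functions. The key observation is that complex analyticity together with a local bound that is uniform in $t$ automatically yields both real analyticity and, through the standard multilinear Cauchy estimate, quantitative bounds on all Fr\'echet derivatives of any fixed order, with the same temporal dependence.

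Step 1 (Real analyticity). Fix any $0<\vartheta<\pi/2$ and let $\U_\C(\varphi_0)\subset W^{m,p}_{\delta,\C}$ be the complex neighborhood provided by Proposition \ref{prop:S(t)-conjugated}. By the very definition \eqref{eq:U_C}, the real neighborhood $U(\varphi_0)\subset W^{m,p}_\delta$ sits inside $\U_\C(\varphi_0)$, and $(0,\infty)\subset\mathbb{S}_\vartheta$. Restricting the analytic map $\mathbb{S}_\vartheta\times\U_\C(\varphi_0)\to\LL\big(W^{m,p}_{\gamma,\C}\big)$ furnished by Proposition \ref{prop:S(t)-conjugated} to the real slice $(0,\infty)\times U(\varphi_0)$ yields the real analyticity of the map \eqref{eq:S(t)-conjugated_real}.

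Step 2 (Cauchy estimates in the complex extension). Fix $\varphi_0=\id+f_0$ and $\ell_0\ge 1$. By Proposition \ref{prop:S(t)-conjugated} there exist $r_0>0$ with $\{f_\C\in W^{m,p}_{\delta,\C}:\|f_\C-f_0\|<r_0\}\subset\U_\C(\varphi_0)$ and a constant $C_0>0$ such that
\[
\big\|S_{\id+f_\C}(t)\big\|_{\LL(W^{m,p}_{\gamma,\C})}\le C_0\,\big(1+|t|\big)^{|\gamma|/2}
\]
for every $f_\C$ in that ball and every $t\in\mathbb{S}_\vartheta$. Set $r:=r_0/2$ and define $U(\varphi_0):=\{f\in W^{m,p}_\delta:\|f-f_0\|_{W^{m,p}_\delta}<r\}$; then for each real $f\in U(\varphi_0)$ and each complex $h\in W^{m,p}_{\delta,\C}$ with $\|h\|\le r$ the point $f+h$ still lies in the ball of radius $r_0$ around $f_0$. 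For fixed $t>0$, the map $f_\C\mapsto S_{\id+f_\C}(t)$ is holomorphic from this ball into $\LL\big(W^{m,p}_{\gamma,\C}\big)$, so the standard multilinear Cauchy estimate gives, for every $f\in U(\varphi_0)$ and every $\ell\in\{0,1,\dots,\ell_0\}$,
\[
\big\|\dd^\ell_\varphi S_{(\cdot)}(t)\big|_{\id+f}\big\|
\le\frac{\ell!}{r^\ell}\,\sup_{\|h\|\le r}\big\|S_{\id+f+h}(t)\big\|_{\LL(W^{m,p}_{\gamma,\C})}
\le\frac{\ell!\,C_0}{r^\ell}\,(1+|t|)^{|\gamma|/2}.
\]
Taking $C:=\max_{0\le\ell\le\ell_0}\ell!\,C_0/r^\ell$ (adjusted so that $C\ge 1$) gives the bound for every $t>0$. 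At $t=0$ we have $S_{\id+f}(0)=\Id$ independently of $f$, so all derivatives of order $\ell\ge 1$ vanish identically, while $\ell=0$ gives $\|\Id\|=1\le C$. Restricting the complex multilinear bound to real tangent directions (which, by the construction \eqref{eq:U_C}, are exactly the real slice of the directions used in the Cauchy formula) preserves the bound without loss, completing the proof.

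The proof is essentially a bookkeeping exercise once Proposition \ref{prop:S(t)-conjugated} is in hand; there is no genuine obstacle. The only substantive point is that the uniformity in $t\in[0,\infty)$ of the growth factor $(1+|t|)^{|\gamma|/2}$ is transferred verbatim from Proposition \ref{prop:S(t)-conjugated} to the derivative estimate, because Cauchy's estimate is linear in the sup of the function itself over a radius-$r$ complex ball. This uniformity is precisely the property required for the application of the Lie--Trotter product formula to the Navier--Stokes equation in Section \ref{sec:unbounded_solutions}.
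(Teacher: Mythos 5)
Your proof is correct and is essentially the paper's own argument, expanded into full detail: the paper proves Proposition \ref{prop:S(t)-conjugated_derivatives} by a one-line appeal to Proposition \ref{prop:S(t)-conjugated} together with the Cauchy estimate for Banach-space-valued holomorphic maps (\cite[Lemma A.2]{KP}), which is exactly the reduction you carry out. The only cosmetic remark is that the constant $\ell!/r^\ell$ in your multilinear Cauchy estimate should perhaps be $\ell^\ell/r^\ell$ if one insists on the full multilinear-operator norm rather than the diagonal restriction, but since $\ell$ ranges over the finite set $\{0,\dots,\ell_0\}$ this is absorbed into $C$ and does not affect the conclusion.
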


\begin{proof}[Proof of Proposition \ref{prop:S(t)-conjugated_derivatives}]
The proof follows directly from Proposition \ref{prop:S(t)-conjugated} and the Cauchy estimate
for complex analytic functions (see, e.g.,\ \cite[Lemma A.2]{KP}).
\end{proof}

We are now ready to prove Theorem \ref{th:conjugate_Laplace_operator}.

\begin{proof}[Proof of Theorem \ref{th:conjugate_Laplace_operator}]
Assume that $m>1+\frac{d}{p}$, $\delta+\frac{d}{p}>-1$, and $\gamma\in\R$.
Since $\D^{m,p}_\delta$ is a group, one concludes from 
the continuity of \eqref{eq:right_translation} that for any given $\varphi\in\D^{m,p}_\delta$
the right translation
\begin{equation}\label{eq:R-isomorphisms}
R_\varphi : W^{m,p}_\gamma\to W^{m,p}_\gamma
\end{equation}
is an isomorphism of Banach spaces (with inverse given by $R_{\varphi^{-1}}$).
Item (i) then follows from the corresponding property of the Laplace operator $\Delta$ when considered
as an unbounded operator on $W^{m,p}_\gamma$ with domain $\widetilde{W}^{m+2,p}_\gamma$
(see \cite[Theorem 2.1\,(i)]{McOwenTopalov5}).
The resolvent estimate in \cite[Theorem 2.1\,(ii)]{McOwenTopalov5} and \eqref{eq:R-isomorphisms}
show that $\Delta_\varphi$ is a sectorial operator with the required properties. This implies (ii).
Let us now prove item (iii).
Since $\{e^{t\Delta}\}_{t\ge 0}$ is a strongly continuous semigroup on $W^{m,p}_\gamma$ with
domain $\widetilde{W}^{m+2,p}_\gamma$, for any $v_0\in\widetilde{W}^{m+2,p}_\gamma$,
$v(t):=e^{t\Delta} v_0$ belongs to 
$C\big([0,\infty),\widetilde{W}^{m+2,p}_\gamma\big)\cap C^1\big([0,\infty),W^{m,p}_\gamma\big)$
and satisfies the heat equation $v_t=\Delta v$, $v|_{t=0}=v_0$. 
This implies that $u(t):=R_\varphi\circ e^{t\Delta}\circ R_{\varphi^{-1}}u_0$ with
$u_0\in D_{\Delta_\varphi}= R_\varphi\big(\widetilde{W}^{m+2,p}_\gamma\big)$, belongs to
$C\big([0,\infty), D_{\Delta_\varphi}\big)\cap C^1\big([0,\infty),W^{m,p}_\gamma\big)$
and
\[
u_t(t)=R_\varphi\big(\Delta\,e^{t\Delta}(R_{\varphi^{-1}}u_0)\big)=\Delta_\varphi u(t),\quad
u|_{t=0}=u_0.
\]
By the uniqueness theorem \cite[Theorem 2.2.2 ]{LL} and item (i) (proven above),
we conclude that for any $u_0\in D_{\Delta_\varphi}$,
\[
e^{t\Delta_\varphi}u_0=R_\varphi\circ e^{t\Delta}\circ R_{\varphi^{-1}}u_0\quad\text{\rm for}\quad t>0.
\]
Since $D_{\Delta_\varphi}$ is densely embedded in $W^{m,p}_\gamma$ we
obtain that $e^{t\Delta_\varphi}=R_\varphi\circ e^{t\Delta}\circ R_{\varphi^{-1}}$ for $t>0$.
Item (iii) then follows from Proposition \ref{prop:S(t)-conjugated_derivatives} and the fact 
that $S(t)=e^{t\Delta}$ for $t>0$.
\end{proof}

\section{The Lie-Trotter product formula}\label{sec:Lie-Trotter}
In this Section we state a variant of the Lie-Trotter product formula for nonlinear semigroups.
These results are proven in Section \ref{sec:Lie-Trotter_proofs} and are a modification of 
\cite[Theorem 2.1]{Marsden}. Note that some of the conditions in \cite{Marsden} are too restrictive
for our purposes, and often do not hold (see Remark \ref{rem:condition(iii)} below for a discussion). 

Let $U$ be a nonempty open set in a Banach space $X$. We say that a map $S : [0,T)\times U\to X$, $T>0$,
satisfies the {\em local semigroup property} if for any $x\in U$ we have that $S_t(x)|_{t=0}=x$ and
$S_t\circ S_s(x)=S_{t+s}(x)$ provided that $s,t,s+t\in[0,T)$ and $S_s(x)\in U$. 
Here, and in what follows, we set $S_t(x):=S(t,x)$.
Let $N$ be an open set in a Banach space $X$. A distance function $d : N\times N\to\R_{\ge 0}$ on $N$ is 
{\em equivalent to the standard distance} on $N$ if there exist constants $0<L_1<L_2<\infty$ such that
\begin{equation}\label{eq:equivalent_distances}
L_1\,d(x,y)\le\|x-y\|\le L_2\,d(x,y)
\end{equation}
for any $x,y\in N$.

\begin{Def}\label{def:quasi-contraction}
Assume that $S : [0,T)\times U\to X$ is continuous and satisfies the local semigroup property.
Then, we say that $S$ is {\em locally equivalent to a quasi-contraction in $U$} if 
for any $x_0\in U$ there exist an open neighborhood $N(x_0)\subseteq U$ of $x_0$ in $X$,
a distance function $d : N(x_0)\times N(x_0)\to\R_{\ge 0}$ on $N(x_0)$ equivalent to the standard distance on $N(x_0)$, 
and a constant $\beta>0$, such that if the open set $V\subseteq N(x_0)$ in $X$ and $0<\tau\le T$ are such that 
$S_t(V)\subseteq N(x_0)$ for any $t\in[0,\tau)$ then 
\begin{equation*}
d\big(S_t(x),S_t(y)\big)\le e^{\beta t}d(x,y)
\end{equation*}
holds for any $x,y\in V$ and $t\in[0,\tau)$. The map $S$ is {\em globally equivalent to a quasi-contraction in $U$}
if the above holds for $N(x_0)$ replaced by $U$.
\end{Def}

\noindent  Two maps $S : [0,T)\times U\to X$ and  $G : [0,T)\times U\to X$ are called {\em locally equivalent to 
a quasi-contraction in $U$ with the same distance function} if for any $x_0\in U$ there exist an open neighborhood $N(x_0)$ and 
a constant $\beta>0$ so that if the open set $V\subseteq N(x_0)$ in $X$ and $0<\tau\le T$ are such that $G_t(V),S_t(V)\subseteq N(x_0)$
for any $t\in[0,\tau)$ then 
\begin{equation}\label{eq:quasi-contraction}
d\big(G_t(x),G_t(y)\big)\le e^{\beta t}d(x,y)\quad\text{\rm and}\quad
d\big(S_t(x),S_t(y)\big)\le e^{\beta t}d(x,y)
\end{equation}
for any $x,y\in V$ and $t\in[0,\tau)$.
We will also need the following Definition. Let $X$ and $Y$ be Banach spaces and $U$ be a nonempty
open set in $X$.

\begin{Def}\label{def:C^k_along_curves}
A continuous map $S : [0,T)\times U\to Y$, $T>0$, is of {\em class $C^k$ along curves} for some $k\in\Z_{\ge 0}$ if
for any $(t,x)\in[0,T)\times U$, multi-indexes $\mu,\alpha\in\Z^\ell_{\ge 0}$ for some $\ell\ge 1$,
$|\mu|+|\alpha|\le k$, and directions $\xi_1,...,\xi_{|\alpha|}\in X$, there exists a directional partial derivative 
\[
\big(D^{\mu,\alpha}S\big)(t,x)\big(\xi_1,...\xi_{|\alpha|}\big):=
\big(D_1^{\mu_1} D_2^{\alpha_1}\cdots D_1^{\mu_\ell} D_2^{\alpha_\ell} S\big)(t,x)\big(\xi_1,...,\xi_{|\alpha|}\big),
\]
where $D_1$ denotes the directional derivative of $S$ in the direction of the first variable $t$ and $D_2$ the
directional derivative in the direction of the second $x$, and the map
\[
[0,T)\times U\times\underbrace{X\times\cdots\times X}_{|\alpha|\text{\rm--times}}\to Y,\quad
(t,x,\xi)\mapsto\big(D^{\mu,\alpha}S\big)(t,x)\big(\xi_1,...\xi_{|\alpha|}\big),
\]
is continuous.
\end{Def} 

\begin{Rem}\label{rem:C^k_along_curves}
The following example explains the reasoning behind Definition \ref{def:C^k_along_curves}.
Let $\{S(t)\}_{t\ge 0}$ be a strongly continuous semigroup of bounded linear operators
$S(t)\in\LL(X)$ on $X$ with dense domain $D_A\subseteq X$ (equipped with the graph norm) and a generator $A$.
Then the map $S : [0,T)\times D_A\to X$ is not $C^1$ since $[0,T)\to\LL(D_A,X)$, $t\mapsto\big(D_2 S\big)(t,x)=S(t)$, 
is not necessarily continuous, but $S : [0,T)\times D_A\to X$ is $C^1$ along curves.
Definition \ref{def:C^k_along_curves} is a special case of the definition of $C^k$ maps for 
Fr\'echet spaces in Hamilton's paper \cite[Definition 3.6.1]{Hamilton}. 
Note that a $C^k$ map is also a $C^k$ map along curves.
The composition of $C^k$ maps along curves is again a $C^k$ map along curves (\cite[Theorem 3.6.4]{Hamilton}). 
Moreover, these maps satisfy the chain rule and Taylor's formula with remainder in integral form 
(see \cite[Section I.3]{Hamilton} for the details).
In particular, if $S : [0,T)\times U\to X$ is $C^k$ along curves and $\gamma\in C^k\big([0,T),U\big)$ then
the composed map $[0,T)\to X$, $t\mapsto S\big(t,\gamma(t)\big)$, is $C^k$. 
This last example explains our choice of name.
\end{Rem}

\medskip

For our variant of the Trotter product formula, we will require a scale of Banach spaces 
$D\subseteq Z\subseteq X$, as well as two additional Banach spaces $\widetilde{X}\supseteq X$ and
$\widetilde{Z}\supseteq Z$. For the convenience of the reader, we summarize the various maps appearing in 
Proposition \ref{prop:product_formula} below in the commutative diagram 
\begin{equation}\label{eq:diagram4}
\begin{tikzcd}
\widetilde{X}&U\arrow[r, "S_t{,}\,G_t"]\arrow[l, swap, "\Xi{,}\,\,\E"]&X\\
\widetilde{Z}&U_Z\arrow[u, hook]\arrow[l, swap, "\Xi{,}\,\,\E"]\arrow[r, "S_t{,}\,G_t"]\arrow[ur,"S_t"]&Z\arrow[u, hook]\\
&U_D\arrow[u, hook]\arrow[ur,"S_t{,}\, G_t"']&
\end{tikzcd}
\end{equation}
where $t\in[0,T)$ and the (hooked) vertical arrows denote the identity embedding of 
subsets in Banach spaces, and the various restrictions of a map are denoted by the same letter.
We have denoted by $U\subseteq X$ the open set $U\cap Z$ in $Z$. Note that $U_Z$ need not be bounded in $Z$
when $U$ is bounded in $X$. Similarly, denote by $U_D$ the open set $U\cap D$ in $D$.

\begin{Prop}\label{prop:product_formula}
Let $(Z,\|\cdot\|_Z)$ and $(X,\|\cdot\|)$ be Banach spaces such that the embedding $Z\subseteq X$
is bounded and $Z$ is reflexive and dense in $X$. 
Assume that there exist a nonempty open set $U$ in $X$ and a constant $T>0$ such that the maps
\begin{equation}\label{eq:G}
G : [0,T)\times U\to X\quad\text{\em and}\quad G : [0,T)\times U_Z\to Z,
\end{equation}
and
\begin{equation}\label{eq:S}
\quad S : [0,T)\times U\to X\quad\text{\em and}\quad S : [0,T)\times U_Z\to Z
\end{equation}
are continuous and satisfy the local semigroup property. In addition, we assume that:
\begin{itemize}
\item[$(a)$] The maps in \eqref{eq:G} are $C^2$ and the map $S : [0,T)\times U_Z\to X$ is $C^2$ along curves
(see Definition \ref{def:C^k_along_curves} above).
\item[$(b)$]  There exists a densely embedded Banach space $D\subseteq Z$ such that the maps
$G : [0,T)\times U_D\to Z$ and $S: [0,T)\times U_D\to Z$ are $C^1$ along curves.
\item[$(c)$] The following two conditions hold:
\begin{itemize}
\item[$1)$] The maps $G : [0,T)\times U\to X$ and $S : [0,T)\times U\to X$ are locally equivalent 
to a quasi-contraction in $U$ with the same distance function.
\item[$2)$] The maps $G : [0,T)\times U_Z\to Z$ and $S : [0,T)\times U_Z\to Z$ are locally equivalent 
to a quasi-contraction in $U_Z$ with the same distance function.
\end{itemize}
\item[$(d)$] For a given $z\in U_Z$ consider the generators 
\begin{equation*}
\E(z):=\frac{\dd}{\dd t}\Big|_{t=0}G_t(z)\quad\text{\rm and}\quad\Xi(z):=\frac{\dd}{\dd t}\Big|_{t=0} S_t(z)
\end{equation*}
of the local semigroups \eqref{eq:G}, \eqref{eq:S}, and assume that the map $\Xi : U_Z\to X$ extends to $C^1$ maps
\begin{equation}\label{eq:Xi-regularity}
\Xi : U_Z\to\widetilde{Z}\quad\text{\rm and}\quad\Xi : U\to\widetilde{X}
\end{equation}
where $\widetilde{Z}$ and $\widetilde{X}$ are Banach spaces such that $X\subseteq\widetilde{X}$ and
$Z\subseteq\widetilde{Z}\subseteq{X}$ are bounded embeddings.
(By $(a)$, the map $\E : U\to X$ is $C^1$.)
\end{itemize}
Then, for any given $z_0\in U_Z$ there exist $0<\tau\le T$ and an open neighborhood $O(z_0)\subseteq U_Z$ of $z_0$ in $Z$
such that for any given $z\in O(z_0)$ the equation
\begin{equation}\label{eq:pde}
\frac{\dd u}{\dd t}=\E(u)+\Xi(u),\quad u|_{t=0}=z,
\end{equation}
has a unique solution
\[
u\in C_b\big([0,\tau),X\big)\cap C^1_b\big([0,\tau),\widetilde{X}\big)
\]
that depends Lipschitz continuously on the initial data $z\in O(z_0)$ and for any $t\in[0,\tau)$ we have that
$u(t)\in B\subseteq U_Z$ for some bounded closed set $B$ in $Z$. 
Moreover, the flow map
\begin{equation}\label{eq:the_flow_map}
H : [0,\tau)\times O(z_0)\to Z,\quad(t,z)\mapsto u(t,z),
\end{equation} 
is right continuous in $t$, satisfies the local semigroup property, and for any $t\in[0,\tau)$ and $z\in O(z_0)$ we have that
$(S_{t/n}\circ G_{t/n}\big)^n(z)\in B$ for any $n\ge 1$ and
\begin{equation}\label{eq:Lie-Trotter}
H_t(z)=\lim_{n\to\infty}\big(S_{t/n}\circ G_{t/n}\big)^n(z)
\end{equation}
where the limit is taken in $X$. Moreover, the convergence in \eqref{eq:Lie-Trotter} is uniform in
$z\in O(z_0)$ and $t\in[0,\tau)$, the rate of convergence is of order $O(1/n)$, and for some constant $\beta>0$,
\begin{equation}\label{eq:H-Lipschitz*}
d\big(H_t(z_1),H_t(z_2)\big)\le e^{2\beta t}d(z_1,z_2)
\end{equation}
for any $z_1,z_2\in O(z_0)$ and $t\in[0,\tau)$ where $d : O(z_0)\times O(z_0)\to\R_{\ge 0}$ is a distance function
that is defined in an open neighborhood of $z_0$ in $X$, which contains the neighborhood $O(z_0)$, and such that it is
equivalent to the standard distance in $X$.
\end{Prop}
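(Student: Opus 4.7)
\emph{Strategy.} The plan is to run a Trotter--Chernoff style argument adapted to the two-scale structure $D\subseteq Z\subseteq X\subseteq\widetilde X$ by setting $F_h:=S_h\circ G_h$ and $u_n(t,z):=F_{t/n}^n(z)$. I would proceed in four stages: (i) shrink $\tau$ and choose an open $O(z_0)\subseteq U_Z$ and a closed bounded $B\subseteq U_Z$ such that for every $n\ge 1$ and $z\in O(z_0)$ all iterates $u_n(t,z)$ lie in $B$; (ii) prove a single-step consistency estimate $\|F_h(z)-z-h(\E(z)+\Xi(z))\|_X=O(h^2)$ uniformly on $B$; (iii) telescope in the spirit of Lady Windermere's fan, combining consistency with $X$-stability to show that $\{u_n\}$ is Cauchy in $C_b([0,\tau),X)$ at rate $O(1/n)$; (iv) identify the limit $H_t(z)$ with the unique solution of the integral equation $u=z+\int_0^t(\E+\Xi)(u)\,ds$ in $\widetilde X$. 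The $C^2$ regularity in $(a)$--$(b)$ supplies consistency, the quasi-contraction in $(c)$ supplies stability at both scales, and $(d)$ together with reflexivity of $Z$ identifies the limit and keeps it inside $Z$.

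\emph{Invariance and stability.} For $z_0\in U_Z$, hypothesis $(c)$ provides equivalent distances $d,d_Z$ on neighborhoods of $z_0$ in $X$ and in $Z$, and a growth rate $\beta$ such that one application of $S_h$ or $G_h$ expands each distance by at most $e^{\beta h}$ as long as the trajectory stays in the relevant neighborhood. Taking $O(z_0)$ a small $d_Z$-ball, $B$ a slightly larger closed $d_Z$-ball inside $U_Z$, and $\tau$ small enough so that the worst-case expansion $e^{2\beta\tau}$ keeps the iterates inside $B$, an induction on the step count verifies the invariance uniformly in $n$. The same mechanism on the $X$-scale gives the uniform-in-$n$ Lipschitz estimate $d(u_n(t,z_1),u_n(t,z_2))\le e^{2\beta t}d(z_1,z_2)$ that eventually descends to \eqref{eq:H-Lipschitz*}.

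\emph{Consistency and Cauchy property.} By Taylor's theorem with integral remainder and hypotheses $(a)$--$(b)$, for $z\in B$,
\[
G_h(z)=z+h\E(z)+R_G(h,z),\qquad S_h(w)=w+h\Xi(w)+R_S(h,w),
\]
with $\|R_G(h,z)\|_Z,\|R_S(h,w)\|_X\le Ch^2$ uniformly for $z,w\in B$ (for $S$ this uses the $C^2$-along-curves notion of Definition \ref{def:C^k_along_curves}). Composing,
\[
F_h(z)=z+h\bigl(\E(z)+\Xi(z)\bigr)+R_G(h,z)+R_S\bigl(h,G_h(z)\bigr)+h\bigl[\Xi(G_h(z))-\Xi(z)\bigr].
\]
The $C^1$ extension $\Xi:U_Z\to\widetilde Z$ from $(d)$, combined with $\widetilde Z\hookrightarrow X$, yields local Lipschitz continuity of $\Xi$ from $Z$ to $X$, so $\|\Xi(G_h(z))-\Xi(z)\|_X=O(\|G_h(z)-z\|_Z)=O(h)$ and the last term is $O(h^2)$ in $X$. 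Altogether $\|F_h(z)-z-h(\E(z)+\Xi(z))\|_X\le Ch^2$ on $B$. Telescoping $u_n(t,z)-u_m(t,z)$ along a standard Lady Windermere fan and applying the $X$-stability bound at each node then gives $\|u_n(t,z)-u_m(t,z)\|_X\le C(1/n+1/m)$, uniformly on $[0,\tau)\times O(z_0)$.

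\emph{Limit, uniqueness, and main obstacle.} The Cauchy estimate delivers $H_t(z):=\lim_n u_n(t,z)\in C_b([0,\tau),X)$ together with \eqref{eq:Lie-Trotter} at rate $O(1/n)$ and the Lipschitz bound \eqref{eq:H-Lipschitz*}. Because $u_n(t,z)\in B$ and $Z$ is reflexive, every subsequence has a weakly convergent sub-subsequence in $Z$; its weak-$Z$ limit must coincide with the strong-$X$ limit $H_t(z)$, which therefore lies in $B\subseteq U_Z$. Rewriting the telescope as a Riemann sum for $s\mapsto(\E+\Xi)(u_n(s,z))$ and passing to the limit in $\widetilde X$ (using continuity of $\E$ and $\Xi$ from $X$ into $\widetilde X$) produces the integral equation, hence $H\in C^1_b([0,\tau),\widetilde X)$; the local semigroup property and right continuity transfer from the approximations. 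For uniqueness, a parallel telescoping of any second solution $v$ trapped in $B$ against $u_n(t,z)$, combined with the $\widetilde X$-integral equation satisfied by $v$, forces $v=H_t(z)$ after $n\to\infty$. The main obstacle is the bookkeeping of norms in the consistency step: the Taylor remainders are only $O(h^2)$ because the base point lies in a $Z$-bounded set, the ODE itself is posed in $\widetilde X$, and the convergence of $u_n$ happens strongly only in $X$. Shuttling between these three scales so that every error term lands in the right space is what forces the proposition to assume the full chain $D\subseteq Z\subseteq X\subseteq\widetilde X$ together with reflexivity of $Z$.
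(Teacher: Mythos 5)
Your overall architecture (stability of the iterates, a one-step second-order estimate, telescoping, then identification of the limit using reflexivity of $Z$) tracks the paper's, but you replace the paper's key estimate with a different one and in doing so introduce two genuine gaps. The paper's central technical ingredient is the \emph{commutator} estimate $d\big(S_h\circ G_h(z),G_h\circ S_h(z)\big)\le Kh^2$ (Lemma~\ref{lem:commutator(T)}), and the Cauchy property is proved by converting $(S_{t/m}\circ G_{t/m})^m$ into $(S_{t/n}\circ G_{t/n})^n$ (for $m=kn$, using the semigroup laws of $S$ and $G$) through a chain of binary commutations, each one costing $O\big((t/(kn))^2\big)$, with the number of commutations $\le k^2$ per block. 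You instead propose a \emph{consistency} estimate $\|F_h(z)-z-h(\E+\Xi)(z)\|_X=O(h^2)$ and then say you ``telescope $u_n-u_m$ along a standard Lady Windermere fan.'' But the standard fan compares a discrete approximation to the \emph{exact} flow, and here no exact flow exists a priori; that is the whole point of the proposition. To compare $u_n$ and $u_m$ you must either bridge through a common refinement $u_{nm}$ and estimate $F_{t/n}(w)$ against $F_{t/(nm)}^m(w)$ along the iterates, or do what the paper does. The moment you write $F_{t/n}=S_{t/(kn)}^k\circ G_{t/(kn)}^k$ and compare it to $(S_{t/(kn)}\circ G_{t/(kn)})^k$, you are back to the commutator estimate, so the consistency formulation is not actually an independent route; it is incomplete as stated.

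The second gap is the one the paper flags in Remark~\ref{rem:EM-paper}. You write ``$\|R_G(h,z)\|_Z,\|R_S(h,w)\|_X\le Ch^2$ uniformly for $z,w\in B$,'' but this is not automatic: $B$ is a closed \emph{bounded} set in $Z$, which in an infinite-dimensional Banach space is not compact, and $C^2$-along-curves gives only continuity (not a priori boundedness) of the second derivatives there. The paper obtains a \emph{locally} uniform $K$ on a small $Z$-neighborhood $\widetilde W(z_0)$ (using compactness only in the time variable), and \emph{then} applies Lemma~\ref{lem:stability(T)} to shrink to a smaller starting neighborhood $W(z_0)$ and a possibly smaller $\tau_1$ so that all intermediate words $S^{\alpha_1}_{t/n}G^{\mu_1}_{t/n}\cdots(z)$ stay inside $\widetilde W(z_0)$, where $K$ is valid. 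The order of these choices is essential: if you first fix $\tau$ and $B$ from stability and only then ask for a uniform consistency constant on $B$, you get a circularity. Your sketch runs the two steps in parallel without resolving this ordering, and also asserts that the semigroup property and right continuity ``transfer from the approximations,'' which in the paper requires a separate argument via rationally commensurable times, density, and the Lipschitz bound \eqref{eq:H-Lipschitz*} (Lemma~\ref{lem:H-semigroup}). These are not cosmetic elisions; they are precisely the points at which earlier published proofs of Lie--Trotter formulas for nonlinear semigroups went wrong.
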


\begin{Rem}\label{rem:condition(iii)}
The condition $(iii)$ in \cite[Theorem 2.1]{Marsden} implies that the conditions $1)$ and $2)$ in 
Proposition \ref{prop:product_formula} $(c)$ hold globally in $U$ and $U_Z$ respectively.
The proof of $(iii)$ would require much stronger global estimates on $S$ that are often not satisfied.
\end{Rem}

The following Proposition is a parametrized version of Proposition \ref{prop:product_formula}.
When applied to the Navier-Stokes equation, it establishes the existence of a zero-viscosity limit
(cf.\ Section \ref{sec:unbounded_solutions}).

\begin{Prop}\label{prop:product_formula_nu}
Assume that the conditions in Proposition \ref{prop:product_formula} are satisfied.
For any value of the parameter $0\le\nu\le 1$ consider the equation
\begin{equation}\label{eq:pde_nu}
\frac{\dd u}{\dd t}=\E(u)+\nu\,\Xi(u),\quad u|_{t=0}=z.
\end{equation}
Then, for any $z_0\in U_Z$ there exist $0<\tau\le T$ and an open neighborhood $O(z_0)\subseteq U_Z$ of
$z_0$ in $Z$ such that for any $z\in O(z_0)$ the equation \eqref{eq:pde_nu} has a unique solution
\[
u^{(\nu)}\in C_b\big( [0,\tau),X\big)\cap C_b\big( [0,\tau),\widetilde{X}\big)
\]
that depends Lipschitz continuously on the initial data, $u^{(\nu)}(t)\in Z$ for $t\in[0,\tau)$, and the map
\[
O(z_0)\times[0,1]\to C_b\big( [0,\tau),X\big)\cap C_b\big( [0,\tau),\widetilde{X}\big),
\quad (z,\nu)\mapsto u^{(\nu)},
\]
is continuous. In particular, $u^{(\nu)}\to u^{(0)}$ as $\nu\to 0+$ where 
$u^{(0)}\in C_b\big( [0,\tau),X\big)\cap C^1_b\big( [0,\tau),\widetilde{X}\big)$ is the unique solution of 
\eqref{eq:pde_nu} with $\nu=0$ and the limit is taken in $C_b\big( [0,\tau),X\big)\cap C_b\big( [0,\tau),\widetilde{X}\big)$.
\end{Prop}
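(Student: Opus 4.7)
The strategy is to reduce Proposition \ref{prop:product_formula_nu} to Proposition \ref{prop:product_formula} by rescaling time in the semigroup $S$, and then to extract continuity in the parameter $\nu$ from the uniformity of the Lie-Trotter convergence.

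First I would introduce, for each $\nu\in[0,1]$, the rescaled local semigroup $S^{(\nu)}_t := S_{\nu t}$, which has generator $\nu\,\Xi$, and verify that the hypotheses $(a)$--$(d)$ of Proposition \ref{prop:product_formula} hold for the pair $(G, S^{(\nu)})$ uniformly in $\nu\in[0,1]$. The $C^2$- and $C^1$-along-curves regularity of $(a)$ and $(b)$ is inherited trivially, since $S^{(\nu)}$ is merely a time-rescaling of $S$. For condition $(c)$, the quasi-contraction estimate transforms as $d(S^{(\nu)}_t(x),S^{(\nu)}_t(y))=d(S_{\nu t}(x),S_{\nu t}(y))\le e^{\nu\beta t}d(x,y)\le e^{\beta t}d(x,y)$, so the same $\beta$ and the same distance function work for every $\nu\in[0,1]$. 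For $(d)$, the generator $\nu\,\Xi$ extends to the same $C^1$ maps into $\widetilde{Z}$ and $\widetilde{X}$, with operator bounds dominated by those of $\Xi$.

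Once this $\nu$-uniformity is established, Proposition \ref{prop:product_formula} applies to $(G, S^{(\nu)})$ with an existence time $\tau>0$ and a neighborhood $O(z_0)\subseteq U_Z$ of $z_0$ in $Z$ chosen independently of $\nu\in[0,1]$. This yields, for every $(z,\nu)\in O(z_0)\times[0,1]$, the required solution $u^{(\nu)}$ of \eqref{eq:pde_nu} together with the Lie-Trotter representation
\begin{equation*}
u^{(\nu)}(t) \;=\; \lim_{n\to\infty}\bigl(S^{(\nu)}_{t/n}\circ G_{t/n}\bigr)^n(z)
\;=\; \lim_{n\to\infty}\bigl(S_{\nu t/n}\circ G_{t/n}\bigr)^n(z),
\end{equation*}
where, by the $\nu$-uniformity just established, the convergence takes place in $X$ uniformly in $(z,\nu,t)\in O(z_0)\times[0,1]\times[0,\tau)$ with rate $O(1/n)$, and the Lipschitz bound \eqref{eq:H-Lipschitz*} holds with a $\nu$-independent constant.

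To conclude joint continuity, I would exploit this uniform convergence. For each fixed $n$, the approximant $(z,\nu,t)\mapsto\bigl(S_{\nu t/n}\circ G_{t/n}\bigr)^n(z)$ is continuous into $X$, because $S$ and $G$ are continuous in their joint variables. A uniform-in-$(z,\nu,t)$ limit of such maps is continuous, which yields continuity of $(z,\nu,t)\mapsto u^{(\nu)}(t)$ from $O(z_0)\times[0,1]\times[0,\tau)$ to $X$. Since the convergence is also uniform in $t$, this upgrades to continuity of $(z,\nu)\mapsto u^{(\nu)}$ as a map into $C_b([0,\tau),X)$. The claim into $C_b([0,\tau),\widetilde{X})$ follows by substitution in the equation itself: $\tfrac{\dd}{\dd t}u^{(\nu)}=\E(u^{(\nu)})+\nu\,\Xi(u^{(\nu)})$ is continuous in $(z,\nu)$ via continuity of the $C^1$ extensions of $\E$ and $\Xi$ in \eqref{eq:Xi-regularity}. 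Sending $\nu\to 0+$ gives the zero-viscosity limit.

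The principal obstacle will be the $\nu$-uniformity of the data produced by Proposition \ref{prop:product_formula} (the neighborhood $O(z_0)$, the time $\tau$, and all internal constants). This requires inspecting the proof in Section \ref{sec:Lie-Trotter_proofs} and checking that every constant depends on $S$ only through quantities that are controlled, at time $t$, by the corresponding quantity for $S$ at time $\nu t\le t$. The rescaling $S^{(\nu)}_t=S_{\nu t}$ is precisely what makes this plausible, since any bound on $S^{(\nu)}$ at time $t\in[0,\tau)$ reduces to a bound on $S$ on the shorter interval $[0,\nu\tau)\subseteq[0,\tau)$.
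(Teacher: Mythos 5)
Your proposal is correct and takes essentially the same route as the paper: time-rescaling $S^{(\nu)}_t := S_{\nu t}$, verifying that the hypotheses of Proposition \ref{prop:product_formula} (and, crucially, the constants in Lemmas \ref{lem:stability(T)}, \ref{lem:commutator(T)}, \ref{lem:fundamental(T)}) can be made uniform in $\nu\in[0,1]$, then deducing joint continuity of $(z,\nu,t)\mapsto H^{(\nu)}_t(z)$ from the uniform rate of Lie--Trotter convergence together with continuity of the finite approximants. The paper implements the uniformity check you flag as "the principal obstacle" through Remarks \ref{rem:nu1}--\ref{rem:nu3}, which track exactly the constants $K$ and $M$ that you indicate need to be controlled.
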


The conditions $1)$ and $2)$ appearing in Proposition \ref{prop:product_formula} $(c)$
can be replaced by sufficient conditions involving global time estimates on the first and
the second derivatives of $S_t : U\to X$. 
Let $(X,\|\cdot\|)$ be a Banach space such that for some $\lambda>1$ the map 
\begin{equation}\label{eq:differentiable_norm}
x\mapsto\|x\|^\lambda,\quad X\to\R,
\end{equation}
is $C^1$ with first derivative that is uniformly bounded on bounded sets in $X$.
This condition is certainly satisfied, for example, if $X$ is a Hilbert space and $\lambda=2$.
More generally, by Lemma \ref{lem:L^p-norm} in Appendix \ref{sec:W-spaces}, the condition also holds
for the $L^p$ based Sobolev spaces for $1<p<\infty$ and $\lambda=p$. We have the following Theorem.

\begin{Th}\label{th:product_formula}
Assume that the norms in $Z$ and $X$ satisfy the property \eqref{eq:differentiable_norm}.
Then, the statements of Proposition \ref{prop:product_formula} and Proposition \ref{prop:product_formula_nu} hold
with the conditions $1)$ and $2)$ in $(c)$ replaced by the following conditions:
\begin{itemize} 
\item[$1^*)$] For any given $x_0\in U$ there exists an open 
neighborhood $N(x_0)\subseteq X$ of $x_0$ in $X$ such that the map
$S : [0,\infty)\times N(x_0)\to X$ is continuous, satisfies the local semigroup property,
and for any given $t\in(0,\infty)$ the map $S_t : N(x_0)\to X$ is $C^2$,
takes values in $N(x_0)$, and  there exist constants $M,\beta>0$ such that
\begin{equation}\label{eq:dS_t-exponential_growth1}
\|\dd_xS_t\|_{\LL(X,X)}\le M e^{\beta t}\quad\text{\rm and}\quad
\|\dd^2_xS_t\|_{\LL(X\times X,X)}\le M e^{\beta t}
\end{equation}
hold for any $x\in N(x_0)$ and for $t\in(0,\infty)$. 

\item[$2^*)$] For any given $z_0\in U_Z$ there exists an open
neighborhood $N(z_0)\subseteq Z$ of $z_0$ in $Z$ such that  the map
$S : [0,\infty)\times N(z_0)\to Z$ is continuous, satisfies the local semigroup property,
and for any given $t\in(0,\infty)$ the map $S_t : N(z_0)\to Z$ is $C^2$, 
takes values in $N(z_0)$, and there exist constants $M_Z,\beta_Z>0$ such that 
\[
\|\dd_zS_t\|_{\LL(Z,Z)}\le M_Z e^{\beta_Z t}\quad\text{\rm and}\quad
\|\dd^2_zS_t\|_{\LL(Z\times Z,Z)}\le M_Z e^{\beta_Z t}
\]
hold for any $z\in N(z_0)$ and for $t\in(0,\infty)$. 
\end{itemize}
\end{Th}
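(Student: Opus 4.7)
My plan is to reduce Theorem \ref{th:product_formula} to Proposition \ref{prop:product_formula} (and its parametrized version Proposition \ref{prop:product_formula_nu}) by verifying that the stronger conditions $1^*)$ and $2^*)$, combined with the differentiability hypothesis \eqref{eq:differentiable_norm}, imply the quasi-contraction conditions $1)$ and $2)$ of $(c)$ on suitable local neighborhoods, with a common distance function equivalent to the standard distance.

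First, to establish $1)$, I fix $x_0\in U$ and let $N(x_0)\subseteq X$ be the neighborhood provided by $1^*)$. The natural candidate distance is
\[
d(x,y):=\sup_{t\ge 0}e^{-\beta t}\|S_t(x)-S_t(y)\|,
\]
where $\beta$ is as in $1^*)$. The bound $\|dS_t\|_{\LL(X,X)}\le Me^{\beta t}$ together with the invariance $S_t(N(x_0))\subseteq N(x_0)$ and the mean value theorem yield $\|x-y\|\le d(x,y)\le M\|x-y\|$, so $d$ is equivalent to the standard distance in the sense of \eqref{eq:equivalent_distances}. The local semigroup identity $S_s\circ S_t=S_{s+t}$ and a change of variable in the supremum give $d(S_t(x),S_t(y))\le e^{\beta t}d(x,y)$ for free.

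The main obstacle is verifying the matching quasi-contraction bound for $G$ in the \emph{same} distance $d$. The crude estimate $d(G_t(x),G_t(y))\le M\|G_t(x)-G_t(y)\|\le Me^{Lt}d(x,y)$, where $L$ is a local Lipschitz constant of the generator $\E$ (available from the $C^2$ hypothesis $(a)$), retains the multiplicative factor $M$, which cannot be absorbed into $e^{\beta't}$ uniformly as $t\to 0^+$. Here the hypothesis \eqref{eq:differentiable_norm} is essential: with $\rho(v):=\|v\|^\lambda$ of class $C^1$ and with $d\rho$ uniformly bounded on bounded sets, one can differentiate $t\mapsto\rho\bigl(S_s(G_t(x))-S_s(G_t(y))\bigr)$ along the $C^1$ flow of $\E$. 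The chain rule gives
\[
\tfrac{d}{dt}\bigl[S_s(G_t(x))-S_s(G_t(y))\bigr]=dS_s(G_t(x))\E(G_t(x))-dS_s(G_t(y))\E(G_t(y)),
\]
and combining this with the first- and second-derivative bounds $\|dS_s\|,\|d^2S_s\|\le Me^{\beta s}$ from $1^*)$ and the local Lipschitz control on $\E$ should yield a Gronwall-type inequality in $t$ whose $s$-dependence is tamed precisely by the exponential weight $e^{-\beta s}$ defining $d$. Taking the supremum over $s\ge 0$ should then produce $d(G_t(x),G_t(y))\le e^{\beta't}d(x,y)$ for some $\beta'\ge\beta$ independent of $M$, establishing $1)$.

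The parallel construction on a neighborhood $N(z_0)\subseteq Z$, using $2^*)$ and the differentiability of the $Z$-norm (which is covered by the hypothesis on $Z$), establishes $2)$. With $1)$ and $2)$ in place on the respective neighborhoods, the conclusions of Proposition \ref{prop:product_formula} and Proposition \ref{prop:product_formula_nu} transfer verbatim and yield Theorem \ref{th:product_formula}. The hardest step is the Gronwall argument for $G$: carefully balancing the exponential factors in $s$ so that the supremum defining $d$ closes without reintroducing the factor $M$ at $t=0^+$; the $C^1$ regularity of $\|\cdot\|^\lambda$ is exactly what allows a differential inequality (rather than a raw Lipschitz estimate) to drive this.
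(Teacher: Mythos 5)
Your proposal is correct but follows a genuinely different route from the paper. The paper (following Marsden) builds a \emph{Finsler} metric $\3\xi\3_x := \sup_{t\ge 0}\|e^{-\beta t}(\dd_x S_t)\xi\|$ from the linearized semigroup, defines $d$ as the infimum of path lengths, and proves the quasi-contraction for $G$ by a Gronwall inequality for $\3(\dd_x\tG_t)\xi\3_{\tG_t(x)}$ along the flow of a globally extended field $\tG$. Crucially, the hypothesis \eqref{eq:differentiable_norm} is used in the paper \emph{only} to build the $C^1$ cut-off $a(x)=\chi(\|x-x_0\|^\lambda)$ so that $\tE=a\E$ is a globally Lipschitz field on $X$ with flow $\tG_t(N)\subseteq N$; the Finsler construction itself does not touch the norm's smoothness. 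You instead take the ``nonlinear'' candidate $d(x,y)=\sup_{s\ge 0}e^{-\beta s}\|S_s(x)-S_s(y)\|$, which avoids the cut-off entirely and uses \eqref{eq:differentiable_norm} for a genuinely different purpose: to differentiate $s\mapsto\|\cdot\|^\lambda$ along the flow. Your Gronwall argument does close, though you stop short of checking it: writing $h(s,t):=e^{-\beta s}\|S_s(G_t x)-S_s(G_t y)\|$ and $\rho=\|\cdot\|^\lambda$, the chain rule gives $|\partial_t(h^\lambda)|\le e^{-\lambda\beta s}\lambda\|v\|^{\lambda-1}\cdot Me^{\beta s}(L_1+L_2)\|G_tx-G_ty\|$, and since $e^{-\lambda\beta s}\|v\|^{\lambda-1}e^{\beta s}=h^{\lambda-1}$ the factor $e^{\pm\beta s}$ cancels identically; the remaining $\|G_tx-G_ty\|=h(0,t)\le H(t):=\sup_{s'}h(s',t)$ gives $|\partial_t(h^\lambda)|\le\lambda M(L_1+L_2)h^{\lambda-1}H(t)$, and taking $\sup_s$ after integrating yields $H(t)^\lambda\le H(0)^\lambda+\lambda M(L_1+L_2)\int_0^t H(\tau)^\lambda\,d\tau$, whence Gronwall gives $d(G_tx,G_ty)\le e^{M(L_1+L_2)t}\,d(x,y)$ with no spurious prefactor at $t=0$. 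Two points you should make explicit: the equivalence $\|x-y\|\le d(x,y)\le M\|x-y\|$ and the second-derivative estimate $\|\dd S_s(G_tx)-\dd S_s(G_ty)\|\le Me^{\beta s}\|G_tx-G_ty\|$ both require the relevant segments to lie in $N(x_0)$, so you must shrink to a convex subball before defining $d$; and the continuity/measurability of $H(t)$ (needed for Gronwall) follows because the bound above is uniform in $s$ on compact $t$-intervals. With those details filled in, your construction is a cleaner alternative that sidesteps the global extension $\tG$ needed in the Finsler approach.
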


\begin{Rem} 
The condition $2^*)$ is simply a reformulation of the condition $1^*)$ for the space $Z$ and 
it is stated in such a repetitive way only for clarity.
Note also that we do not assume that $N(x_0)\subseteq U$ or that $N(z_0)\subseteq U_Z$.
In this regard, the conditions $1^*)$ and $2^*)$ mean that the maps in \eqref{eq:S} can be extended
to $[0,\infty)\times N(x_0)$ (respectively $[0,\infty)\times N(z_0)$). 
In the applications of Theorem \ref{th:product_formula} the neighborhood $U$,
in contrast to the neighborhood $N(x_0)$, is not invariant with respect to the maps in \eqref{eq:S}.
Note also that the neighborhood $N(z_0)$ in $2^*)$ is not related to the neighborhood $N(x_0)$ in $1^*)$. 
\end{Rem}

The proofs of Proposition \ref{prop:product_formula} and Theorem \ref{th:product_formula}
are given in Section \ref{sec:Lie-Trotter_proofs}.

\section{Proof of Theorem \ref{th:NS} and Theorem \ref{th:NS_nu}}\label{sec:unbounded_solutions}
Our main tool for proving Theorem \ref{th:NS} and Theorem \ref{th:NS_nu} is the variant of 
the Lie-Trotter product formula stated in Section \ref{sec:Lie-Trotter}.

\begin{proof}[Proof of Theorem \ref{th:NS}]
Assume that $m>2+\frac{d}{p}$, $-1/2<\delta+\frac{d}{p}<d+1$, and $m_0:=m+8$.
Without loss of generality we will assume that $\nu=1$.
In Lagrangian coordinates, the Navier-Stokes equation takes the form
\begin{equation}\label{eq:NS_Lagrange_coordinates}
(\dt{\varphi},\dt{v})=\E(\varphi,v)+\Xi(\varphi,v),\quad(\varphi,v)|_{t=0}=(\id,u_0),
\end{equation}
where 
\begin{eqnarray}\label{eq:E-NS}
&\E : \D^{m,p}_\delta\times W^{m,p}_\delta\to W^{m,p}_\delta\times W^{m,p}_\delta,\\
&(\varphi,v)\mapsto\Big(v,\big(R_\varphi\circ\nabla\circ\Delta^{-1}\circ R_{\varphi^{-1}}\big)\circ
\big(R_\varphi\circ Q\circ R_{\varphi^{-1}}\big)(v)\Big),\nonumber
\end{eqnarray}
is the Euler vector field where $Q(u):=\tr\big([\dd u]^2\big)$, $[\dd u]$ is the Jacobi matrix of
the vector field $u\in W^{m,p}_\delta$,
\begin{equation}\label{eq:Xi-NS}
\Xi : \D^{m,p}_\delta\times W^{m,p}_\delta\to W^{m-2,p}_{\delta+2}\times W^{m-2,p}_{\delta+2},\quad
(\varphi,v)\mapsto\Big(0,\big(R_\varphi\circ\Delta\circ R_{\varphi^{-1}}\big)(v)\Big),
\end{equation}
and $\Delta_\varphi:=R_\varphi\circ\Delta\circ R_{\varphi^{-1}}$ and $\Delta$ is the Laplace operator 
on $\R^n$. (We refer to \cite[Section 3]{McOwenTopalov4} for the mapping properties of $\Delta$ and
its inverse on weighted spaces.) The map \eqref{eq:Xi-NS} is continuous by Lemma \ref{lem:W-multiplication}
and Lemma \ref{lem:continuity_composition_general}.
Moreover, by \cite[Theorem 4.1]{McOwenTopalov4}, the vector field \eqref{eq:E-NS} is 
$C^k$ for any $k\ge 1$.\footnote{In fact, the vector field $\E$ is analytic.}
Then, by the existence of solutions of ODEs (see e.g. \cite[Chapter IV]{Lang}) for any 
$u_\bullet\in\aW^{m,p}_\delta$ there exist an open neighborhood $U$ of $(\id,u_\bullet)$ in
$\D^{m,p}_\delta\times W^{m,p}_\delta$ and $T>0$ such that for any $(\varphi,v)\in U$ there exists
a unique integral curve $[0,T)\to\D^{m,p}_\delta\times W^{m,p}_\delta$, $t\mapsto\G(t;\varphi,v)=\G_t(\varphi,v)$,
of the vector field $\E$ that starts at $(\varphi,v)$ and the map
\begin{equation}\label{eq:G-NS}
\G : [0,T)\times U\to X,\quad
(t;\varphi,v)\mapsto\G(t;\varphi,v),
\end{equation}
is $C^k$ for any $k\ge 1$ with
\[
X:=W^{m,p}_\delta\times W^{m,p}_\delta.
\] 
Recall that we identify $\D^{m,p}_\delta$ with an open set in $W^{m,p}_\delta$ 
(see the discussion after \eqref{eq:group_D}). We also set
\begin{align*}
&Z:=W^{m_0,p}_\delta\times W^{m_0,p}_\delta,\quad
D:=W^{m_0+4,p}_\delta\times W^{m_0+4,p}_\delta,\\
&\widetilde{X}:=W^{m-2,p}_\delta\times W^{m-2,p}_\delta,\quad
\widetilde{Z}:=W^{m_0-2,p}_\delta\times W^{m_0-2,p}_\delta\,.
\end{align*}
We will now apply Theorem \ref{th:product_formula} with the maps $S$ and $G$ replaced 
respectively by $\Sz$ and $\G$.
It follows from Proposition \ref{prop:T-independent_on_regularity} in Appendix \ref{sec:W-spaces}
that the restrictions 
\[
\G : [0,T)\times U_Z\to Z\quad\text{\rm and}\quad\G : [0,T)\times U_D\to D
\]
of the map \eqref{eq:G-NS} to $ U_Z= U\cap Z$ and $U_D= U\cap D$
are well-defined $C^k$ maps for any $k\ge 1$.
It follows from Theorem \ref{th:conjugate_Laplace_operator} that the map \eqref{eq:Xi-NS} generates
a nonlinear semigroup 
\begin{equation}\label{eq:S-NS}
\Sz : [0,\infty)\times U\to X,\quad(t;\varphi,v)\mapsto\Sz_t(\varphi,v)=
\Sz(t;\varphi,v):=\big(\varphi,S_\varphi(t)v\big)
\end{equation}
where 
\begin{equation}\label{eq:S(t)-conjugated'}
S_\varphi(t)= R_\varphi\circ S(t)\circ R_{\varphi^{-1}}
\end{equation}
and $\{S(t)\}_{t\ge 0}$ (cf.\ \cite[Theorem 2.2]{McOwenTopalov5}) is the analytic semigroup generated
by the Laplace operator $\Delta$ on $\R^d$ considered as an unbounded operator on $W^{m,p}_\delta$ with
domain $\widetilde{W}^{m+2,p}_\delta$  (cf. \eqref{eq:W-tilde}).
It follows from Lemma \ref{lem:continuity_composition_general} that the map \eqref{eq:S-NS} and its restrictions
\[
\Sz : [0,\infty)\times U_Z\to Z\quad\text{\rm and}\quad
\Sz : [0,\infty)\times U_D\to D
\]
are continuous. By \cite[Theorem 2.2]{McOwenTopalov5}, $\{S(t)\}_{t\ge 0}$ is
a strongly continuous semigroup on $W^{l,p}_\delta$ for any $l\ge 0$ with domain 
$D_{\Delta}=\widetilde{W}^{l+2,p}_\delta$. For the domain of the operator $\Delta^2$ on $W^{l,p}_\delta$
we have
\[
D_{\Delta^2}:=\big\{f\in D_\Delta\,\big|\,\Delta f\in D_\Delta\big\}=
\big\{f\in\Sz'\,\big|\,\partial^\alpha f\in W^{l,p}_\delta, |\alpha|\le 4\big\}.
\]
The spaces $D_\Delta$ and $D_{\Delta^2}$ are equipped with the graph norms.
Since the subspaces $W^{l+2,p}_\delta\subseteq D_\Delta$ and 
$W^{l+4,p}_\delta\subseteq D_{\Delta^2}$ are boundedly embedded, we conclude from
Lemma \ref{lem:S-smoothness} in Appendix \ref{sec:W-spaces} that the map
$S : [0,\infty)\times W^{l+2,p}_\delta\to W^{l,p}_\delta$ is $C^1$ along curves and that
\begin{equation}\label{eq:map0}
S : [0,\infty)\times W^{l+4,p}_\delta\to W^{l,p}_\delta,\quad l\ge 0,
\end{equation}
is $C^2$ along curves.
It follows from \eqref{eq:S-NS} and \eqref{eq:S(t)-conjugated'} that the map $\Sz : [0,\infty)\times U_Z\to X$ is 
a composition of the following $C^2$ maps along curves
\begin{equation}\label{eq:map1}
[0,\infty)\times U_Z\to[0,\infty)\times\D^{m_0-2,p}_\delta\times W^{m_0-2,p}_\delta,\quad
\big(t;\varphi,v)\mapsto(t;\varphi,\varphi^{-1}\circ v\big),
\end{equation}
\begin{equation}\label{eq:map2}
[0,\infty)\times\D^{m_0-2,p}_\delta\times W^{m_0-2,p}_\delta\to
[0,\infty)\times\D^{m+2,p}_\delta\times W^{m+2,p}_\delta,\quad
(t;\varphi,u)\mapsto\big(t;\varphi,S(t)u\big),
\end{equation}
and
\begin{equation}\label{eq:map3}
[0,\infty)\times\D^{m+2,p}_\delta\times W^{m+2,p}_\delta\to
[0,\infty)\times X,\quad(t;\varphi,u)\mapsto(t;\varphi,\varphi\circ v)\,.
\end{equation}
The maps \eqref{eq:map1} and \eqref{eq:map3} are $C^2$ by 
Theorem \ref{th:composition} in Appendix \ref{sec:W-spaces} and 
the map \eqref{eq:map2} is $C^2$ along curves since the map \eqref{eq:map0}
with $l=m+2$ is $C^2$ along curves and since $m_0-2=m+6$.
By using that the composition of $C^2$ maps along curves is again a $C^2$ map along curves
(cf. \cite[Theorem 3.5.5]{Hamilton}), we then conclude that
\[
\Sz : [0,\infty)\times U_Z\to X
\]
is $C^2$ along curves (cf. Definition \ref{def:C^k_along_curves} and Remark \ref{rem:C^k_along_curves}).
Similar arguments show that the map
\[
\Sz : [0,\infty)\times U_D\to Z
\]
is $C^1$ along curves. In particular, for any $(\varphi,v)\in U_Z$ we have that
\[
\frac{\dd}{\dd t}\Big|_{t=0}\Sz_t(\varphi,v)=\Xi(\varphi,v)
\]
where the maps $\Xi : U_Z\to\widetilde{Z}$ and $\Xi : U\to\widetilde{X}$ are $C^1$.
In this way, we see that all conditions on the maps in Proposition \ref{prop:product_formula}
except condition (c) are satisfied by the maps \eqref{eq:G-NS} and \eqref{eq:S-NS} respectively. 
Since by Theorem \ref{th:conjugate_Laplace_operator} the conditions $1^*)$ and $2^*)$ in
Theorem \ref{th:product_formula} also hold (with $N(\varphi_0,v_0):=V(\varphi_0)\times W^{m,p}_\delta$ where
$V(\varphi_0)$ is the open neighborhood of $\varphi_0$ in $\D^{m,p}_\delta$ given by 
Theorem \ref{th:conjugate_Laplace_operator} (iii) and $(\varphi_0,v_0)\in U$) and since 
by Lemma \ref{lem:L^p-norm} in Appendix \ref{sec:W-spaces} the norm in $X$ and $Z$ satisfy 
the condition \eqref{eq:differentiable_norm} with $\lambda=p$, we can apply 
Theorem \ref{th:product_formula} to conclude that for any 
$(\id,u_0)\in U_Z=\D^{m_0,p}_\delta\times W^{m_0,p}_\delta$, $\Div u_0=0$, there exist
$\tau>0$ and a unique solution
\begin{equation}\label{eq:NS-colution(LC)}
(\varphi,v)\in C_b\big([0,\tau),\D^{m,p}_\delta\times W^{m,p}_\delta\big)\cap
C^1_b\big([0,\tau),\D^{m-2,p}_\delta\times W^{m-2,p}_\delta\big)
\end{equation}
of the Navier-Stokes equation \eqref{eq:NS_Lagrange_coordinates} in Lagrangian coordinates.
The solution depends continuously on the initial data $u_0\in\aW^{m_0,p}_\delta$,
$(\varphi(t),v(t))\in\D^{m_0,p}_\delta\times W^{m_0,p}_\delta$ for $t\in[0,\tau)$, and the curve
$t\mapsto(\varphi(t),v(t))$, $[0,\tau)\to\D^{m_0,p}_\delta\times W^{m_0,p}_\delta$, is right continuous.
By setting $u(t):=v(t)\circ\varphi(t)^{-1}$ for $t\in[0,\tau)$ we then obtain from 
\eqref{eq:NS_Lagrange_coordinates}, Theorem \ref{th:composition},
and Lemma \ref{lem:continuity_composition_general}, that
\begin{equation}\label{eq:NS-solution}
u\in C_b\big([0,\tau),W^{m,p}_\delta\big)\cap C_b^1\big([0,\tau),W^{m-3,p}_\delta\big)
\end{equation}
is a solution of the equation
\begin{equation}\label{eq:NS-modified}
u_t+u\cdot\nabla u=\nabla\circ\Delta^{-1}\circ Q(u)+\Delta u,\quad u|_{t=0}=u_0,
\end{equation}
and it depends continuously on $u_0\in\aW^{m_0,p}_\delta$. 
It follows from \eqref{eq:NS-solution} and \eqref{eq:NS-modified} that 
\[
u(t)=u_0+\int_0^t\Big[-u\cdot\nabla u+\nabla\circ\Delta^{-1}\circ Q(u)+\Delta u\Big]\,ds
\]
where the integrand is in $C_b\big([0,\tau),W^{m-2,p}_\delta\big)$ since 
$u\in C_b\big([0,\tau),W^{m,p}_\delta\big)$ (cf. \cite[Proposition 4.1]{McOwenTopalov4}).
This implies that 
\begin{equation}\label{eq:NS-modified'}
u\in C_b\big([0,\tau),W^{m,p}_\delta\big)\cap  C^1_b\big([0,\tau),W^{m-2,p}_\delta\big)
\end{equation}
and it depends continuously on $u_0\in W^{m_0,p}_\delta$.
By taking the divergence in \eqref{eq:NS-modified} one obtains from
the equality $\Div\big(u\cdot\nabla u\big)=Q(u)+u\cdot\nabla(\Div u)$ that
$(\Div u)_t+u\cdot\nabla(\Div u)=0$ with $(\Div u)|_{t=0}=0$.
This implies that $(\Div u)(t,\varphi(t))$ is independent of $t\in[0,\tau)$, and hence
$u(t)$ is divergence free for any $t\in[0,\tau)$ (cf. the proof of \cite[Proposition 4.1]{McOwenTopalov5}).
Hence, \eqref{eq:NS-modified'} is a solution of the Navier-Stokes equation \eqref{eq:NS}.
Conversely, if \eqref{eq:NS-modified'} is a solution of the Navier-Stokes equation \eqref{eq:NS} such that
$|\nabla\p(t,x)|=o(1)$ as $|x|\to\infty$ for $t\in[0,\tau)$, then we consider the solution
$\varphi\in C^1_b\big([0,\tau),\D^{m,p}_\delta\big)$ of the equation 
$\dt{\varphi}(t)=u(t)\circ\varphi(t)$, $\varphi|_{t=0}=\id$ (see \cite[Proposition 2.1]{McOwenTopalov4}),  
and set $v(t):=u(t)\circ\varphi(t)$ for $t\in[0,\tau)$. A straightforward computation then shows that
$(\varphi(t),v(t))$ is a solution of \eqref{eq:NS_Lagrange_coordinates} such that 
\eqref{eq:NS-colution(LC)} holds. (Note that by Theorem \ref{th:composition} we have that
$(\varphi,v)\in C_b\big([0,\tau),\D^{m,p}_\delta\times W^{m,p}_\delta\big)$ but
$(\varphi,v)\in C_b^1\big([0,\tau),\D^{m-3,p}_\delta\times W^{m-3,p}_\delta\big)$.
The additional smoothness \eqref{eq:NS-colution(LC)} of this curve follows since it is a solution
of \eqref{eq:NS_Lagrange_coordinates}.) This and the uniqueness of the solutions in
Theorem \ref{th:product_formula} then imply that \eqref{eq:NS} has a unique solution
such that \eqref{eq:NS-modified'} holds. This completes the proof of Theorem \ref{th:NS}.
\end{proof}

\begin{proof}[Proof of Theorem \ref{th:NS_nu}]
We already verified that the maps \eqref{eq:G-NS} and \eqref{eq:S-NS} satisfy the conditions of 
Theorem \ref{th:product_formula}. Theorem \ref{th:NS_nu} then follows from Theorem \ref{th:product_formula}
and the arguments at the end of the proof of Theorem \ref{th:NS} needed for the proof that the solution \eqref{eq:NS-solution}
belongs to $C_b\big([0,\tau),\aW^{m,p}_\delta\big)\cap  C^1_b\big([0,\tau),\aW^{m-2,p}_\delta\big)$.
\end{proof}

\section{Proof of the product formula}\label{sec:Lie-Trotter_proofs}
In this Section we prove Proposition \ref{prop:product_formula}, Proposition \ref{prop:product_formula_nu}, 
and Theorem \ref{th:product_formula} stated in Section \ref{sec:Lie-Trotter}.  
We will first prove the following stronger variant of Lemma 1 in \cite[Appendix B]{EM}.

\begin{Lem}\label{lem:stability(T)}
Let $(Z,\|\cdot\|_Z)$ and $(X,\|\cdot\|_X)$ be Banach spaces so that $Z$ is densely embedded in $X$, $Z\subseteq X$,
and let $U$ be a nonempty open set in $X$. Assume that the maps 
\begin{equation}\label{eq:G,S}
G : [0,T)\times U\to X\quad\text{\rm and}\quad S : [0,T)\times U\to X
\end{equation}
are continuous, satisfy the local semigroup property and the condition $1)$ in $(c)$, and the maps
$G : [0,T)\times U_Z\to X$ and $S : [0,T)\times U_Z\to X$ are $C^1$ along curves.
Then, for any $y_0\in U$ and for any open neighborhood $\widetilde{V}(y_0)\subseteq U$ of $y_0$ in $X$ there exist 
an open neighborhood $V= V(y_0)\subseteq\widetilde{V}(y_0)$ of $y_0$ in $X$ and $0<\tau=\tau(y_0)\le T$ 
(generally, depending on the choice of $y_0$ and $\widetilde{V}(y_0)$) such that for any $x\in V$ and $t\in[0,\tau]$ we have that
\begin{equation}\label{eq:stability(T)}
S_{t/n}^{\alpha_1}\circ G_{t/n}^{\mu_1}\circ\cdots\circ S_{t/n}^{\alpha_\ell}\circ 
G_{t/n}^{\mu_\ell}(x)\in\widetilde{V}(y_0)
\end{equation}
for any $n\ge 1$ and for any multi-indexes $\alpha,\mu\in\Z_{\ge 0}^\ell$ with $1\le\ell\le n$,
$|\alpha|\le n$, $|\mu|\le n$, and where we set $S_{t/n}^0(x)=x$ and $G_{t/n}^0(x)=x$ for $x\in U$.
\end{Lem}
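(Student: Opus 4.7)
The plan is to compare the iterate in \eqref{eq:stability(T)} with a \emph{reference} iterate obtained by applying the same sequence of basic operations but starting from an anchor point $z_0\in U_Z$ close to $y_0$ in the $X$-norm; such a $z_0$ exists by the density of $Z$ in $X$. First I would shrink $\widetilde{V}(y_0)$ so that it is strictly contained in the open neighborhood $N(y_0)\subseteq U$ furnished by Definition \ref{def:quasi-contraction}, with associated equivalent distance $d$ on $N(y_0)$, constants $L_1\le L_2$, and quasi-contraction rate $\beta$. Expanding the composition in \eqref{eq:stability(T)} into a sequence $\sigma_1,\dots,\sigma_N$ of $N=|\alpha|+|\mu|\le 2n$ basic operations, each of step size $s:=t/n$, I would set $w_0:=x$, $\tilde w_0:=z_0$, and inductively $w_k:=\sigma_k(w_{k-1})$, $\tilde w_k:=\sigma_k(\tilde w_{k-1})$. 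The decisive observation is that the total time $Ns\le 2t$ is bounded independently of $n$.

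Assuming provisionally that every iterate lies in $N(y_0)$, the one-step quasi-contraction combined with the triangle inequality gives
\[
d(\tilde w_k,z_0)\le e^{\beta s}d(\tilde w_{k-1},z_0)+d(\sigma_k(z_0),z_0),
\]
which telescopes to $d(\tilde w_k,z_0)\le e^{2\beta t}\sum_{j=1}^{k}d(\sigma_j(z_0),z_0)$. Since $z_0\in U_Z$ and $G,S:[0,T)\times U_Z\to X$ are $C^1$ along curves, the $X$-valued curves $s\mapsto G_s(z_0)$ and $s\mapsto S_s(z_0)$ have bounded $t$-derivative on a short interval, so $\|\sigma_j(z_0)-z_0\|_X\le Cs$ for a constant $C=C(z_0)$, and hence
\[
\|\tilde w_k-z_0\|_X\le L_2\,d(\tilde w_k,z_0)\le 2(L_2/L_1)\,C\,e^{2\beta t}\,t.
\]
In parallel, the step-by-step quasi-contraction comparison of the $x$-trajectory with the $z_0$-trajectory yields $d(w_k,\tilde w_k)\le e^{2\beta t}d(x,z_0)$, and therefore $\|w_k-\tilde w_k\|_X\le (L_2/L_1)e^{2\beta t}\|x-z_0\|_X$. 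By the triangle inequality, $\|w_k-y_0\|_X$ is bounded by the sum of three pieces that can each be made smaller than one third of the (previously chosen) radius of $\widetilde{V}(y_0)$ as a ball around $y_0$, by taking in order $\tau$ small, $z_0$ close to $y_0$, and $V(y_0)$ a small ball around $y_0$. This forces $w_k\in\widetilde{V}(y_0)$ for every $0\le k\le N$, and in particular for $k=N$, which is the conclusion \eqref{eq:stability(T)}.

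The main obstacle is the self-referential nature of the estimate: the one-step quasi-contraction at step $k$ requires a priori that the flow $\sigma_k^{s'}$, $s'\in[0,s]$, maps an appropriate open subset of $N(y_0)$ containing $w_{k-1}$ and $\tilde w_{k-1}$ back into $N(y_0)$. I would break this circularity by bootstrapping with joint continuity: since $G,S:[0,T)\times U\to X$ are continuous at $(0,y_0)$, one can produce a ball $V_\ast=B_X(y_0,\eta_\ast)\subseteq N(y_0)$ and a threshold $\delta_1>0$ such that $G_{s'}(V_\ast)\cup S_{s'}(V_\ast)\subseteq N(y_0)$ for every $s'\in[0,\delta_1)$. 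Then, with $\tau\le\delta_1$ chosen small enough that the quantitative bounds above keep all iterates inside $V_\ast$, an induction on $k$ simultaneously verifies the invariant $w_k,\tilde w_k\in V_\ast$ and the applicability of the quasi-contraction estimate at step $k+1$, thereby closing the loop. A minor subordinate point is that once the iterates are known to lie in $U$, the identification $G_{t/n}^{\mu_j}=G_{\mu_j t/n}$ via the local semigroup property is automatic; but we do not actually need to invoke it for proving \eqref{eq:stability(T)}, which concerns the iterated basic operations by definition.
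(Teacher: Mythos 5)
Your proposal is correct and follows essentially the same route as the paper's proof: both decompose the composite into elementary $G_{t/n}$/$S_{t/n}$ steps (the paper's reduction to $\alpha_j,\mu_j\in\{0,1\}$, $\alpha_j\ne\mu_j$ plays the same role as your $\sigma_1,\dots,\sigma_N$), both anchor the $x$-trajectory against a reference trajectory started from $z_0\in U_Z$ near $y_0$, both use the $C^1$-along-curves property to get the $O(t/n)$ one-step displacement of the reference trajectory, both exploit that the cumulative time $\le 2t$ is bounded independently of $n$, and both close the circular dependence by a bootstrap induction that simultaneously keeps the iterates inside the fixed neighborhood and justifies the quasi-contraction estimate at each step. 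One small point to reorder: $z_0$ must be fixed before shrinking $\tau$ (not after, as you write), because the constant $C=C(z_0)$ in your one-step bound $\|\sigma_j(z_0)-z_0\|_X\le Cs$ depends on $z_0$; this is exactly the order the paper uses and is a trivial fix.
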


\begin{Rem}\label{rem:stability(T)}
In particular, Lemma \ref{lem:stability(T)} implies that for any $x\in V$ and $t\in[0,\tau]$ we have that
$(S_{t/n}\circ G_{t/n})^n\in\widetilde{V}(y_0)$ for any $n\ge 1$.
\end{Rem}

\begin{proof}[Proof of Lemma \ref{lem:stability(T)}]
We first note that it is enough to prove the statement in the case when $n\ge 1$, $1\le\ell\le 2n$, 
and the components of the multi-indexes $\alpha,\mu\in\Z_{\ge 0}^\ell$ satisfy the condition
\begin{equation}\label{eq:reduced_condition}
\alpha_j,\mu_j\in\{0,1\}\,\,\,\text{\rm and}\,\,\,\alpha_j\ne\mu_j\,\,\,\text{\rm for}\,\,\, 1\le j\le\ell\,.
\end{equation}
(This follows since for any integer $1\le j\le n$ we have that
$S_{t/n}^j(x)=(S_{t/n}\circ G_{t/n}^0)\circ\cdots\circ(S_{t/n}\circ G_{t/n}^0)(x)$ and
$G_{t/n}^j(x)=(S_{t/n}^0\circ G_{t/n})\circ\cdots\circ(S_{t/n}^0\circ G_{t/n})(x)$ where
the compositions are taken $j$ times, $x\in U$, and $t\in[0,T)$.)
Let us now prove this equivalent statement. For a given $y_0\in U$  we will assume
without loss of generality that $\widetilde{V}(y_0)=U$. 
Since the maps in \eqref{eq:G,S} are continuous and $G_t(y_0)|_{t=0}=S_t(y_0)|_{t=0}=y_0$
we can choose an open neighborhood $W$ of $y_0$, $W\subseteq U$, and 
$0<\tau<T$ such that for any $x\in W$ and $t\in[0,\tau]$ we have that 
\begin{equation}\label{eq:G,S_in_U}
G_t(x), S_t(x)\in U.
\end{equation}
Let us now take $z_0\in W_Z:=W\cap Z$.
Since the maps $G : [0,T)\times U_Z\to X$ and $S : [0,T)\times U_Z\to X$ are $C^1$ along curves,
for any given $\lambda_1,\lambda_2\in\{0,1\}$, $\lambda_1\ne\lambda_2$, the curve $z : [0,\tau]\to X$, 
$z: t\mapsto S_{\lambda_1 t}\circ G_{\lambda_2 t}(z_0)$ is $C^1$.
In view of the compactness of the interval $[0,\tau]$ we then obtain that there exists $K>0$
(depending on the choice of $z_0\in U_Z$ but independent of the choice of $\lambda_1,\lambda_2\in\{0,1\}$,
$\lambda_1\ne\lambda_2$) such that 
\begin{equation}\label{eq:K}
\|z(t)-z_0\|=\big\|\int_0^t\frac{\dd z}{\dd t}(s)\,ds\big\|\le K t
\end{equation}
for any $t\in[0,\tau]$. This, together with \eqref{eq:equivalent_distances}, implies that for any $t\in[0,\tau]$,
\begin{equation}\label{eq:est1}
d\big(z_0,S_{\lambda_1 t}\circ G_{\lambda_2 t}(z_0)\big)\le K_1 t,
\end{equation}
with $K_1:=K/L_1$. Let $\delta>0$ be chosen so small that the ball $B^d_\delta(z_0):=\{x\in X\,|\,d(x,z_0)<\delta\}$ is 
contained in $W$. By taking $\tau>0$ smaller if necessary we ensure that
\begin{equation}\label{eq:est2}
2e^{2\beta\tau}K_1\tau<\delta\,.
\end{equation}
We now take $n\ge 1$, $\alpha,\mu\in\Z_{\ge 0}^\ell$, $1\le\ell\le 2n$, $|\alpha|\le n$, $|\mu|\le n$, 
such that the condition \eqref{eq:reduced_condition} holds.
We will first prove that for any $n\ge 1$, $1\le j\le\ell$, and $t\in[0,\tau]$, we have that
\begin{equation}\label{eq:est3}
d\big(z_0,\Pi_j(z_0)\big)<\delta
\end{equation}
where
\[
\Pi_j(z_0):=(S_{t/n}^{\alpha_1}\circ G_{t/n}^{\mu_1})\circ\cdots
\circ(S_{t/n}^{\alpha_j}\circ G_{t/n}^{\mu_j})(z_0)\in W
\] 
and $d$ is the distance function in condition 1) in (c). To this end, note that it follows from \eqref{eq:est1}
(applied with $\lambda_1=\alpha_j$, $\lambda_2=\mu_j$, and $t$ replaced by $t/n$) 
and \eqref{eq:est2} that for any $t\in[0,\tau]$,
\begin{equation}\label{eq:step1}
d\big(z_0,(S_{t/n}^{\alpha_j}\circ G_{t/n}^{\mu_j})(z_0)\big)\le K_1 t/n<\delta,
\quad 1\le j\le\ell,
\end{equation}
and hence $(S_{t/n}^{\alpha_j}\circ G_{t/n}^{\mu_j})(z_0)\in W$, $1\le j\le\ell$. 
This proves the statement when $\ell=1$ and when $j=1$ and $2\le\ell\le 2n$.
Let us now assume that $2\le\ell\le 2n$.
Then we obtain from \eqref{eq:G,S_in_U} that $\Pi_2(z_0)\in U$ for any $t\in[0,\tau]$, and we have that
\begin{align*}
d\big(z_0,\Pi_2(z_0)\big)&\le
d\big(z_0,(S_{t/n}^{\alpha_1}\circ G_{t/n}^{\mu_1})(z_0)\big)+
d\big((S_{t/n}^{\alpha_1}\circ G_{t/n}^{\mu_1})(z_0),\Pi_2(z_0)\big)\\
&\le d\big(z_0,(S_{t/n}^{\alpha_1}\circ G_{t/n}^{\mu_1})(z_0)\big)
+e^{\beta(\alpha_1+\mu_1)t/n}d\big(z_0,(S_{t/n}^{\alpha_2}\circ G_{t/n}^{\mu_2})(z_0)\big)\\
&\le 2e^{\beta\tau}K_1 t/n\le 2e^{2\beta\tau}K_1\tau<\delta
\end{align*}
where we used the property \eqref{eq:quasi-contraction}, \eqref{eq:step1}, and \eqref{eq:est2}.
In particular, we see that $\Pi_2(z_0)\in W$ for $t\in[0,\tau]$
(and hence $\Pi_3(z_0)\in U$).
By continuing this process inductively in $2\le j\le\ell$ (with $\ell\le 2n$ fixed), we obtain that for any $t\in[0,\tau]$,
\begin{align*}
d\big(z_0,\Pi_j(z_0)\big)&\le
\sum_{1\le k\le j}d\big(\Pi_{k-1}(z_0),\Pi_k(z_0)\big)\\
&\le\sum_{1\le k\le j}e^{\beta(k-1)t/n}d\big(z_0,(S_{t/n}^{\alpha_k}\circ G_{t/n}^{\nu_k})(z_0)\big)\\
&\le j e^{\beta(j-1)t/n} K_1 t/n\le 2e^{2\beta\tau}K_1\tau<\delta,
\end{align*}
where we used \eqref{eq:step1}, the fact that $\alpha_q,\mu_q\in\{0,1\}$, $\alpha_q\ne\mu_q$,
$1\le q\le\ell$, and that $j\le\ell\le 2n$. Hence $\Pi_j(z_0)\in W$ for $t\in[0,\tau]$.
This proves the preliminary statement \eqref{eq:est3}.
We now take $\epsilon>0$ so that $B^d_{3\epsilon}(y_0)\subseteq W$ and choose
$z_0\in B^d_\epsilon(y_0)\cap Z$. By the discussion above, there exists $\tau>0$ (depending on the choice of $z_0$)
so that \eqref{eq:est3} (with $\delta=\epsilon$) holds for any $n\ge 1$, $1\le\ell\le 2n$, $1\le j\le\ell$,
and $t\in[0,\tau]$. Let us now choose $\kappa>0$ so that
\begin{equation}\label{eq:est4}
e^{2\beta\tau}\kappa<\epsilon\,.
\end{equation}
Then, for any $x\in B^d_\kappa(y_0)$ and for any $n\ge 1$ we argue inductively in $1\le j\le\ell$
to conclude that 
\[
\Pi_j(x):=(S_{t/n}^{\alpha_1}\circ G_{t/n}^{\mu_1})\circ\cdots
\circ(S_{t/n}^{\alpha_j}\circ G_{t/n}^{\mu_j})(x)\in U
\]
and that for $t\in[0,\tau]$,
\begin{align*}
d\big(y_0,\Pi_j(x)\big)&\le
d(y_0,z_0)+d\big(z_0,\Pi_j(z_0)\big)+d\big(\Pi_j(z_0),\Pi_j(x)\big)\\
&\le 2\epsilon+e^{\beta jt/n}\kappa<3\epsilon
\end{align*}
where we used \eqref{eq:quasi-contraction}, \eqref{eq:est3} (with $\delta=\epsilon$), and \eqref{eq:est4}.
In particular, $\Pi_j(x)\in W$ (and hence $\Pi_{j+1}\in U$).
We complete the proof by setting $V:=B^d_\kappa(y_0)$.
\end{proof}

\begin{Rem}\label{rem:nu1}
The proof of Lemma \ref{lem:stability(T)} shows that under the assumption of Lemma \ref{lem:stability(T)}
there exist an open neighborhood $V=V(y_0)\subseteq\widetilde{V}(y_0)$ of $y_0$ in $X$ and 
$0<\tau\le T$ as above such that \eqref{eq:stability(T)} holds with $S(t)$ replaced by $S^{(\nu)}_t=S_{\nu t}$ 
uniformly in $0\le\nu\le 1$. This follows since the second estimate in \eqref{eq:quasi-contraction}
with $S_t$ replaced by $S^{(\nu)}_t$ holds uniformly in $0\le\nu\le 1$ and since the constant $K>0$ appearing
in \eqref{eq:K} can be chosen so that the estimate $\max\limits_{t\in[0,\tau]}\big\|\frac{\dd z_\nu}{\dd t}(t)\big\|\le K$
where $z_\nu(t):=S^{(\nu)}_{\lambda_1 t}\circ G_{\lambda_2 t}$ holds uniformly $0\le\nu\le 1$.
\end{Rem}

Let us now prove Proposition \ref{prop:product_formula}.

\begin{proof}[Proof of Proposition \ref{prop:product_formula}]
Assume that the conditions of Proposition \ref{prop:product_formula} are satisfied.
We will assume without loss of generality that $S : [0,T)\times U\to X$ and $G : [0,T)\times U\to X$
are globally equivalent to a quasi-contraction with a distance function $d : U\times U\to\R_{\ge 0}$ and
a constant $\beta>0$ (cf. \eqref{eq:quasi-contraction}).
It follows from the continuity of the maps $S : [0,T)\times U\to X$ and $G : [0,T)\times U\to X$ that
for a given $y_0\in U$ there exist $V= V(y_0)\subseteq U$ and $\tau>0$  such that 
for any $x\in V$ and $t\in[0,\tau]$ the compositions $S_t\circ G_t(x)$ and $G_t\circ S_t(x)$ are 
well defined and belong to $U$. We have the following

\begin{Lem}\label{lem:commutator(T)}
For any $z_0\in V_Z:= V\cap Z$ there exist an open neighborhood $\widetilde{W}(z_0)\subseteq V_Z$ 
of $z_0$ in $Z$ and a constant $K>0$ such that
\begin{equation}\label{eq:commutator(T)}
d\big(S_t\circ G_t(z),G_t\circ S_t(z)\big)\le K t^2
\end{equation}
for any $z\in\widetilde{W}(z_0)$ and for any $t\in[0,\tau]$.
\end{Lem}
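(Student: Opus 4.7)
The strategy is the classical one: show that the two curves
\[
A(t):=S_t\circ G_t(z)\quad\text{and}\quad B(t):=G_t\circ S_t(z)
\]
agree to first order in $t$ at $t=0$, then invoke Taylor's formula with integral remainder to extract the $O(t^2)$ bound. Fix $z_0\in V_Z$. By continuity of the four maps \eqref{eq:G} and \eqref{eq:S} (from $[0,T)\times U_Z$ into $Z$ and from $[0,T)\times U$ into $X$) and by the local semigroup property at $t=0$, I first choose an open neighborhood $\widetilde{W}(z_0)\subseteq V_Z$ of $z_0$ in $Z$ and shrink $\tau>0$ so that for every $z\in\widetilde{W}(z_0)$ and every $t\in[0,\tau]$ one has $G_t(z)\in U_Z$, $S_t(z)\in U$, and both compositions $A(t),B(t)$ lie in $U$.

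Next I verify that $A,B\in C^2\big([0,\tau],X\big)$. For $B(t)=G_t(S_t(z))$ this is direct: by (a) the map $G:[0,T)\times U\to X$ is $C^2$ in both variables, the curve $t\mapsto S_t(z)$ is $C^2$ along curves into $X$ (again by (a)), and the chain rule for $C^2$-along-curves maps (Remark~\ref{rem:C^k_along_curves}, cf.\ \cite[Thm.\ 3.6.4]{Hamilton}) yields $B\in C^2\bigl([0,\tau],X\bigr)$. For $A(t)=S_t(G_t(z))$ I use that $G:[0,T)\times U_Z\to Z$ is $C^2$ (so $t\mapsto G_t(z)$ is a $C^2$ curve in $Z$) and that $S:[0,T)\times U_Z\to X$ is $C^2$ along curves; the same chain rule gives $A\in C^2\bigl([0,\tau],X\bigr)$. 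Evaluating at $t=0$,
\[
A(0)=B(0)=z,\qquad A'(0)=\Xi(z)+\E(z)=B'(0),
\]
where I have used $S_0=\id=G_0$ on the relevant domains and the definition of the generators in (d).

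Because $A(0)=B(0)$ and $A'(0)=B'(0)$, Taylor's formula with integral remainder (valid in the $C^2$-along-curves category, cf.\ \cite[\S\,I.3]{Hamilton}) gives
\[
A(t)-B(t)=\int_0^t(t-s)\bigl(A''(s)-B''(s)\bigr)\,\dd s,\qquad t\in[0,\tau].
\]
It remains to bound $A''$ and $B''$ uniformly on $[0,\tau]\times\widetilde{W}(z_0)$. Expanding $B''$ via the chain rule produces terms of the form $(D_1^2G)+2(D_1 D_2G)\cdot\dot S+(D_2^2 G)(\dot S,\dot S)+(D_2 G)\cdot\ddot S$, each of which is continuous in $(t,z)$ as a map into $X$ by the $C^2$-along-curves hypotheses on $S:[0,T)\times U_Z\to X$ and the $C^2$ hypothesis on $G:[0,T)\times U\to X$; the analogous expansion for $A''$ uses $G:[0,T)\times U_Z\to Z$ $C^2$ together with $S:[0,T)\times U_Z\to X$ $C^2$-along-curves (the term $(D_2 S)\cdot\ddot G$ makes sense because $\ddot G\in Z$ and $S$ has directional $Z$-derivatives into $X$). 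Continuity of all these terms on the compact interval $[0,\tau]$ for fixed $z_0$, and continuity in $z\in Z$, lets me shrink $\widetilde{W}(z_0)$ one more time to obtain a finite constant $M$ with $\|A''(s)\|+\|B''(s)\|\le M$ for $s\in[0,\tau]$ and $z\in\widetilde{W}(z_0)$. Hence $\|A(t)-B(t)\|\le\tfrac{M}{2}t^2$, and the conclusion \eqref{eq:commutator(T)} follows with $K:=M/(2L_1)$ from the equivalence \eqref{eq:equivalent_distances} of the distance function $d$ with the norm.

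The main obstacle is purely bookkeeping: one must be careful that every derivative appearing in $A''$ and $B''$ lands in the correct space. In particular, the term $(D_2 S)\cdot\ddot G$ in $A''$ is the reason condition (a) demands $S:[0,T)\times U_Z\to X$ (rather than merely $S:[0,T)\times U\to X$) be $C^2$ along curves, and it is the reason $G$ is required to be $C^2$ into $Z$ and not only into $X$. Once this regularity is in place, the proof reduces to the Taylor argument above.
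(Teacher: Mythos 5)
Your proof is correct and is essentially the paper's own argument: the paper sets $R(t):=S_t\circ G_t(z_0)-G_t\circ S_t(z_0)$, notes that condition $(a)$ and the chain rule for $C^k$-along-curves maps make $R$ a $C^2$ curve into $X$ with $R(0)=0$ and $R'(0)=(\Xi+\E)-(\E+\Xi)=0$, applies Taylor's formula with integral remainder, bounds the second derivative by compactness of $[0,\tau]$, and converts the norm estimate into a $d$-estimate via \eqref{eq:equivalent_distances} with $K=M/(2L_1)$. You do exactly this, merely keeping $A$ and $B$ separate rather than their difference $R$, and you spell out the space-bookkeeping for the second-derivative terms a bit more explicitly than the paper does (the paper compresses the local uniformity in $z$ near $z_0$ into one sentence at the end, whereas you build it in by shrinking $\widetilde{W}(z_0)$ up front, which is the same idea).
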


\begin{proof}[Proof of Lemma \ref{lem:commutator(T)}]
Take $z_0\in V_Z\subseteq V$. It follows from condition $(a)$ that the map 
$R(t):=S_t\circ G_t(z_0)-G_t\circ S_t(z_0)$, $[0,\tau]\to U\subseteq X$,
is $C^2$ along curves (cf. Remark \ref{rem:C^k_along_curves}). 
In view of Taylor's formula (with reminder in integral form) we have
\[
R(t)=R(0)+\frac{\dd R}{\dd t}(0)\,t+\Big(\int_0^1(1-s)\frac{\dd^2R}{\dd t^2}(st)\,ds\Big)\,t^2
\]
for any $t\in[0,\tau]$. In view of the compactness of the interval $[0,\tau]$ and the fact that
$R(0)=0$ and, by the chain rule (cf. Remark \ref{rem:C^k_along_curves}), we have that 
$\frac{\dd R}{\dd t}(0)=(\Xi+\E)-(\E+\Xi)=0$, 
we conclude that
\[
\|R(t)\|\le M t^2/2
\]
where $M:=\max\limits_{0\le s\le\tau]}\big\|\frac{\dd^2R}{\dd t^2}(s)\big\|$.
The statement of the Lemma (with $K:=M/(2L_1)$) then follows from 
the property \eqref{eq:equivalent_distances}.
The local uniformity of $K>0$ in $U_Z$ follows easily from condition $(a)$ and the compactness of $[0,\tau]$
(cf. Remark \ref{rem:C^k_along_curves}).
\end{proof}

\begin{Rem}\label{rem:nu2}
Note that the open neighborhood $V=V(y_0)\subseteq U$ of $y_0$ in $X$ and $\tau>0$ can be chosen
so that  for any $x\in V$ and $t\in[0,\tau]$ the compositions $S_t^{(\nu)}\circ G_t(x)$ and 
$G_t\circ S_t^{(\nu)}(x)$ are 
well defined and belong to $U$ for any $0\le\nu\le 1$.
Then, Lemma \ref{lem:commutator(T)} holds with $S_t$ replaced by $S^{(\nu)}_t$ uniformly 
in $0\le\nu\le 1$. This follows since the second estimate in \eqref{eq:quasi-contraction}
with $S_t$ replaced by $S^{(\nu)}_t$ holds uniformly in $0\le\nu\le 1$ and since the constant $M>0$
can be chosen so that the estimate $\max\limits_{t\in[0,\tau]}\|R_\nu(t)\|\le Mt^2$ where 
$R_\nu(t):=S^{(\nu)}_t\circ G_t(z_0)-G_t\circ S_t^{(\nu)}(z_0)$ holds uniformly in $0\le\nu\le 1$.
\end{Rem}

Note that $\tau>0$ in Lemma \ref{lem:commutator(T)} is independent of the choice of $z_0\in V_Z$.

\begin{Lem}\label{lem:fundamental(T)}
Take $z_0\in V_Z= V\cap Z$ and let $K>0$, $\tau>0$, and the open neighborhood $\widetilde{W}(z_0)\subseteq V_Z$
of $z_0$ in $Z$ be given by Lemma \ref{lem:commutator(T)}.
Then, there exist an open neighborhood $W(z_0)\subseteq\widetilde{W}(z_0)$ of $z_0$ in $Z$
and $0<\tau_1=\tau_1(z_0)\le\tau$ (generally, depending on the choice of $z_0$ and $\widetilde{W}(z_0)$) 
such that\footnote{In fact, we prove that the stronger condition \eqref{eq:in_the_domain_general(fundamental)} below holds.}
\begin{equation}\label{eq:in_the_domain(fundamental)}
(S_{t/n}\circ G_{t/n})^n(z)\in\widetilde{W}(z_0)\quad\forall n\ge 1
\end{equation}
and 
\begin{equation}\label{eq:fundamental(T)}
d\big((S_{t/n}\circ G_{t/n})^n(z),(S_{t/m}\circ G_{t/m})^m(z)\big)\le 2 K e^{2\beta t}t^2/n
\end{equation}
for any $1\le n\le m$, $z\in W(z_0)$, and $t\in[0,\tau_1]$.
\end{Lem}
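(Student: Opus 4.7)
The plan is to establish the two claims in sequence: first the invariance \eqref{eq:in_the_domain(fundamental)} via a direct appeal to Lemma \ref{lem:stability(T)}, and then the Cauchy-type bound \eqref{eq:fundamental(T)} by refining both products to a common fine time scale and comparing them to an alternating reference word using the commutator estimate of Lemma \ref{lem:commutator(T)}.

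For the invariance I would apply Lemma \ref{lem:stability(T)} with $y_0 = z_0$ and $\widetilde V(z_0) := \widetilde W(z_0)$. This yields an open neighborhood $W(z_0)\subseteq \widetilde W(z_0)$ of $z_0$ in $X$ (which I then intersect with $Z$ to obtain the required neighborhood in $Z$) and a time $0 < \tau_1 \le \tau$ with the following property: every iterated product of the form $S_{t/N}^{\alpha_1} G_{t/N}^{\mu_1}\cdots S_{t/N}^{\alpha_\ell} G_{t/N}^{\mu_\ell}(z)$ remains in $\widetilde W(z_0)$ for any $z\in W(z_0)$, $t\in[0,\tau_1]$, and admissible multi-indices. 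Taking $N = nm$ not only gives \eqref{eq:in_the_domain(fundamental)}, but also ensures that every intermediate state appearing in the swap argument below lies in $\widetilde W(z_0)$, where both the quasi-contraction property from $(c)\,1)$ and the commutator bound of Lemma \ref{lem:commutator(T)} remain applicable.

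For \eqref{eq:fundamental(T)} I set $h := t/(nm)$ and use the local semigroup property to rewrite
\[
\big(S_{t/n}\circ G_{t/n}\big)^n = \big(S_h^m\circ G_h^m\big)^n,\qquad \big(S_{t/m}\circ G_{t/m}\big)^m = \big(S_h^n\circ G_h^n\big)^m.
\]
Both are words in the alphabet $\{S_h,G_h\}$ of length $2nm$, each containing $nm$ copies of $S_h$ and $nm$ of $G_h$, and I compare them to the common alternating word $P := (S_h\circ G_h)^{nm}$. Within each of the $n$ blocks $S_h^m G_h^m$, transforming $S_h^m G_h^m$ into $(S_h G_h)^m$ requires exactly $m(m-1)/2$ adjacent swaps $S_h G_h \leftrightarrow G_h S_h$, giving a total of $n\cdot m(m-1)/2$ swaps; analogously, the $m$-product requires $m\cdot n(n-1)/2$ swaps. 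Each swap contributes, by Lemma \ref{lem:commutator(T)}, an error of at most $K h^2$ in $d$ at the swap position, and this error then propagates through the at-most $2nm$ remaining operators by the quasi-contraction, yielding a Lipschitz factor bounded by $e^{\beta\cdot 2nm\cdot h} = e^{2\beta t}$. Summing gives
\[
d\big(\big(S_{t/n} G_{t/n}\big)^n(z),\,P(z)\big) \le K e^{2\beta t}\cdot \frac{n\,m(m-1)}{2}\cdot h^2 \le K e^{2\beta t}\,\frac{t^2}{2n},
\]
and likewise $d\big((S_{t/m} G_{t/m})^m(z),P(z)\big)\le K e^{2\beta t} t^2/(2m)\le K e^{2\beta t} t^2/(2n)$ since $n\le m$. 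The triangle inequality then delivers \eqref{eq:fundamental(T)}.

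The main obstacle is the bookkeeping of intermediate states in the swap argument: I must guarantee that every configuration obtained during the rearrangement lies in $\widetilde W(z_0)$, so that both Lemma \ref{lem:commutator(T)} and the quasi-contraction remain applicable. This is precisely what the invariance from Step 1 supplies, once one checks that each such intermediate word can be realized as an iterated product of the type covered by Lemma \ref{lem:stability(T)} at the scale $h = t/(nm)$. A secondary technical point is the careful counting of swaps and of Lipschitz factors to match the claimed $2K e^{2\beta t} t^2/n$ bound, but the estimates above have a factor of $2$ slack to absorb the accounting.
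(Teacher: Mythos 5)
Your combinatorial argument in Step 2 is sound and is essentially the paper's argument, organized slightly more directly (you compare both products to the common alternating word $(S_h\circ G_h)^{nm}$ at the fine scale $h=t/(nm)$, whereas the paper first treats $m=kn$ and then passes through $(S_{t/(nm)}\circ G_{t/(nm)})^{nm}$ for the general case; your exact swap count $m(m-1)/2$ per block replaces the paper's bound $\ell_k\le k^2$, but these are cosmetic). The one genuine gap is in Step 1, where you propose to apply Lemma \ref{lem:stability(T)} in the space $X$ with $\widetilde V(y_0):=\widetilde W(z_0)$ and then ``intersect with $Z$.'' This does not go through: Lemma \ref{lem:stability(T)} requires $\widetilde V(y_0)$ to be an \emph{open} subset of the ambient Banach space, and $\widetilde W(z_0)$ is open in $Z$ but not in $X$ (the $Z$-topology is strictly finer). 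Moreover, what \eqref{eq:in_the_domain(fundamental)} demands is membership in the $Z$-open set $\widetilde W(z_0)$, which an $X$-level application of the stability lemma cannot deliver: intersecting an $X$-neighborhood with $Z$ afterwards controls where the initial data lives, not where the iterated products land.

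The correct move, and what the paper does, is to apply Lemma \ref{lem:stability(T)} with the abstract pair $(X,Z)$ of that lemma replaced by $(Z,D)$: the hypotheses needed are then supplied by $(b)$ (the $D\to Z$ regularity giving $C^1$ along curves) and by condition $2)$ in $(c)$ (the $Z$-level quasi-contraction with its own distance function). With $y_0=z_0$, $U=V_Z$, and $\widetilde V(y_0)=\widetilde W(z_0)$ (now legitimately a $Z$-open set inside $V_Z$), this yields the $Z$-neighborhood $W(z_0)\subseteq\widetilde W(z_0)$ and the time $\tau_1$, and gives the stronger containment \eqref{eq:in_the_domain_general(fundamental)} of \emph{all} intermediate words at scale $t/(nm)$ in $\widetilde W(z_0)$. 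Once that is in place, your Step 2 — including the bookkeeping that every intermediate configuration in the swap chain is of the form covered by the stability lemma — closes without further issue, and in fact yields the slightly sharper bound $K e^{2\beta t}t^2/n$.
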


\begin{proof}[Proof of Lemma \ref{lem:fundamental(T)}]
Take $z_0\in V_Z= V\cap Z$ and let $K>0$, $\tau>0$, and the open neighborhood $\widetilde{W}(z_0)$
of $z_0$ in $Z$ be given by Lemma \ref{lem:commutator(T)}.
It then follows from condition $(b)$, the continuity of the second map in \eqref{eq:G} and \eqref{eq:S}, 
and Lemma \ref{lem:stability(T)}, that we can choose an open neighborhood $W(z_0)\subseteq\widetilde{W}(z_0)$ of 
$z_0$ in $Z$ and $0<\tau_1\le\tau$ such that \eqref{eq:in_the_domain(fundamental)} holds for any $z\in W(z_0)$, 
$t\in[0,\tau_1]$, and for any $n\ge 1$. (We apply Lemma \ref{lem:stability(T)} in the $Z$ space with $y_0= z_0$, 
$U= V_Z$ (see condition $2)$ in $(c)$), and $\widetilde{V}(y_0)=\widetilde{W}(z_0)$.)
More generally, by Lemma \ref{lem:stability(T)}, we can choose $W(z_0)$ so that for any $z\in W(z_0)$
and $t\in[0,\tau_1]$ we have that
\begin{equation}\label{eq:in_the_domain_general(fundamental)}
S_{t/n}^{\alpha_1}\circ G_{t/n}^{\mu_1}\circ\cdots\circ S_{t/n}^{\alpha_\ell}\circ G_{t/n}^{\mu_\ell}(z)
\in\widetilde{W}(z_0)\subseteq V_Z
\end{equation}
for any $n\ge 1$ and for any $\alpha,\mu\in\Z_{\ge 0}^\ell$, $1\le\ell\le n$, such that $|\alpha|\le n$ and $|\mu|\le n$. 
Let us now prove \eqref{eq:fundamental(T)}. To this end, we will first assume that $m=k n$ for 
some integer $k\ge 1$. Then, for any $z\in W(z_0)$,
\[
(S_{t/m}\circ G_{t/m})^m(z)=\big[(S_{t/(kn)}\circ G_{t/(kn)})^k\big]^n(z)
\]
and, by the semigroup property,
\[
(S_{t/n}\circ G_{t/n})^n(z)=\big[S_{t/(kn)}^k\circ G_{t/(kn)}^k\big]^n(z)\,.
\]
Consider for simplicity the case when $k=3$. Then, the term 
\[
(S_{t/(3n)}\circ G_{t/(3n)})^3(z)=
(S_{t/(3n)}\circ G_{t/(3n)})\circ(S_{t/(3n)}\circ G_{t/(3n)})\circ(S_{t/(3n)}\circ G_{t/(3n)})(z)
\]
can be transformed into $S_{t/(3n)}^3\circ G_{t/(3n)}^3(z)$ by the sequence of terms
\begin{align*}
&A_1:=S_{t/(3n)}\circ G_{t/(3n)}\circ S_{t/(3n)}\circ(G_{t/(3n)}\circ S_{t/(3n)})\circ G_{t/(3n)}(z)\\
&A_2:=S_{t/(3n)}\circ(G_{t/(3n)}\circ S_{t/(3n)})\circ S_{t/(3n)}\circ G_{t/(3n)}\circ G_{t/(3n)}(z)\\
&A_3:=S_{t/(3n)}\circ S_{t/(3n)}\circ(G_{t/(3n)}\circ S_{t/(3n)})\circ G_{t/(3n)}\circ G_{t/(3n)}(z)\\
&A_4:=S_{t/(3n)}\circ S_{t/(3n)}\circ S_{t/(3n)}\circ G_{t/(3n)}\circ G_{t/(3n)}\circ G_{t/(3n)}(z)
\end{align*}
with the property that each term in the sequence is obtained from the previous one by a binary commutation of 
the type described above. In the general case, we obtain in the same way a sequence of terms 
\[
A_1,...,A_{\ell_k}
\] 
such that each term is obtained from the previous one by a binary commutation,
\[
A_1:=(S_{t/(kn)}\circ G_{t/(kn)})^k(z),\quad A_{\ell_k}:=S_{t/(kn)}^k\circ G_{t/(kn)}^k(z),
\]
and $\ell_k\le (k-1)+(k-2)+...+1=k(k-1)/2\le k^2$. Then, by the triangle inequality
\begin{align}
d\big((S_{t/n}\circ G_{t/n})^n(z),(S_{t/m}\circ G_{t/m})^m(z)\big)&=
d\Big(\big[S_{t/(kn)}^k\circ G_{t/(kn)}^k\big]^n(z),\big[(S_{t/(kn)}\circ G_{t/(kn)})^k\big]^n(z)\Big)\nonumber\\
&\le\sum_{j=0}^{n-1}\sum_{l=1}^{\ell_k-1}d\big(B_{j,l}(z),B_{j,l+1}(z)\big)\label{eq:est5}
\end{align}
where
\begin{equation}\label{eq:B_{j,l}}
B_{j,l}(z):=\big[(S_{t/(kn)}\circ G_{t/(kn)})^k\big]^{n-j-1}\circ A_l\circ\big[S_{t/(kn)}^k\circ G_{t/(kn)}^k\big]^j(z)\,.
\end{equation}
Note that the terms appearing in \eqref{eq:B_{j,l}} are of the form \eqref{eq:in_the_domain_general(fundamental)},
and hence belong to $U_Z\subseteq U$ where the distance $d$ is defined.
Moreover, since the terms in \eqref{eq:in_the_domain_general(fundamental)} belong to $\widetilde{W}(z_0)$,
the estimate \eqref{eq:commutator(T)} can be applied for $z$ replaced by any of the terms appearing in 
\eqref{eq:in_the_domain_general(fundamental)}.
In particular, we obtain from \eqref{eq:quasi-contraction}, \eqref{eq:commutator(T)}, \eqref{eq:B_{j,l}}, 
and the definition of the terms $A_1,...,A_{\ell_k}$, that
\begin{align}\label{eq:d(B,B)}
d\big(B_{j,l}(z),B_{j,l+1}(z)\big)&\le e^{2\beta t(n-1)/n}e^{2\beta t(k-1)/(kn)} K(t/{nk})^2\nonumber\\
&\le K e^{2\beta t}(t/{nk})^2\,.
\end{align}
This, together with \eqref{eq:est5}, then implies that for $m=kn$,
\begin{equation}\label{eq:crucial}
d\big((S_{t/m}\circ G_{t/m})^m(z),(S_{t/n}\circ G_{t/n})^n(z)\big)\le (n k^2) K e^{2\beta t}(t/{nk})^2
\le K e^{2\beta t}t^2/n
\end{equation}
where we also used that $\ell_k\le k^2$. For $1\le n\le m$ arbitrary we then have
\begin{align*}
&d\big((S_{t/n}\circ G_{t/n})^n(z),(S_{t/m}\circ G_{t/m})^m(z)\big)\\
&\le d\big((S_{t/n}\circ G_{t/n})^n(z),(S_{t/m}\circ G_{t/(nm)})^{nm}(z)\big)+
d\big((S_{t/m}\circ G_{t/(nm)})^{nm}(z),(S_{t/m}\circ G_{t/m})^m(z)\big)\\
&\le K e^{2\beta t}t^2/n+K e^{2\beta t}t^2/m\le 2K e^{2\beta t}t^2/n\,.
\end{align*}
Hence, the inequality \eqref{eq:fundamental(T)} holds for any $1\le n\le m$,
$z\in W(z_0)$, and $t\in[0,\tau_1]$.
\end{proof}

\begin{Rem}\label{rem:EM-paper}
Note that, contrary to the statement before Lemma 4 in \cite[Appendix B]{EM},
the parameter $\tau_1>0$ in Lemma \ref{lem:fundamental(T)} (and Lemma 2 in \cite{EM})
cannot be (generally) chosen equal to $\tau>0$. The reason is that in the proof of \eqref{eq:d(B,B)}
the estimate \eqref{eq:commutator(T)} is applied not only to $z$, but also to terms of the form
\eqref{eq:in_the_domain_general(fundamental)}. One needs an additional assumption to ensure
that the constant $K>0$ in \eqref{eq:commutator(T)} can be taken to be uniform over terms of the form 
\eqref{eq:in_the_domain_general(fundamental)} for any $z\in W(z_0)$ and $t\in[0,\tau]$. 
As a consequence, the construction of $H_t(x)$ for $x\in U$ and $t\in[0,\tau]$ cannot be completed
as stated in \cite[Appendix B]{EM}. The proof of the analogous \cite[Lemma 2.5]{Marsden} uses
the restrictive condition $(iii)$ (see the proof of \cite[Corolarry 2.4]{Marsden}) that we do not assume. 
In what follows, we deal with the issue in an alternative way.
\end{Rem}

\begin{Rem}\label{rem:nu3}
One verifies by inspection that Lemma \ref{lem:fundamental(T)} holds with 
$S_t$ replaced by $S^{(\nu)}_t$ uniformly in $0\le\nu\le 1$. In particular,
the open neighborhood $W(z_0)$ of $z_0$ in $Z$ and $\tau_1>0$ can be chosen 
independently of $0\le\nu\le 1$, so that the inequalities \eqref{eq:fundamental(T)} and
\eqref{eq:in_the_domain_general(fundamental)} with $S_t$ replaced by $S^{(\nu)}_t$ 
hold uniformly in $0\le\nu\le 1$.
\end{Rem}

By Lemma \ref{lem:fundamental(T)}, for a given $z_0\in V_Z$ we choose
$\widetilde{W}(z_0)\subseteq V_Z$, $W(z_0)\subseteq\widetilde{W}(z_0)$, and $\tau:=\tau_1>0$,
so that \eqref{eq:in_the_domain(fundamental)} and \eqref{eq:fundamental(T)} hold
for any $0\le n\le m$, $z\in W(z_0)$, and $t\in[0,\tau]$. It then follows from \eqref{eq:fundamental(T)} that
the sequence $\big((S_{t/n}\circ G_{t/n})^n(z)\big)_{n\ge 1}$ converges in $X$ to an element $H_t(z)\in X$,
\begin{equation}\label{eq:H-limit}
(S_{t/n}\circ G_{t/n})^n(z)\stackrel{X}{\to} H_t(z)\quad\text{\rm and}\quad H_t(z)\in X
\end{equation}
for any $z\in W(z_0)$ and $t\in[0,\tau]$.
We will assume without loss of generality that $\widetilde{W}(z_0)= B_\rho^Z(z_0)$
where $B_\rho^Z(z_0)$ denotes the open ball of radius $\rho>0$ in $Z$ centered at $z_0$ and where
$\rho>0$ is chosen so that the closure $\overline{B}_\rho^Z(z_0)$ of $B_\rho^Z(z_0)$ in $Z$ is contained in $V_Z$,
i.e., $\overline{B}_\rho^Z(z_0)\subseteq V_Z$. This implies that
\begin{equation}\label{eq:bouded_sequence}
\big((S_{t/n}\circ G_{t/n})^n(z)\big)_{n\ge 1}\subseteq\widetilde{W}(z_0)=\overline{B}_\rho^Z(z_0)\subseteq U_Z\,.
\end{equation}
By the reflexivity of $Z$ and Alaoglu theorem, there exists a subsequence of the sequence above that converges weakly in $Z$ to 
an element in $\tilde{z}\in\overline{B}_\rho^Z(z_0)$.
In view of \eqref{eq:H-limit}, the subsequence converges weakly in $X$ to $H_t(z)\in X$.
Since $X^*\subseteq Z^*$, we then obtain that $f(\tilde{z})=f(H_t(z))$ for any bounded linear functional $f\in X^*$.
By the Hahn-Banach theorem, $\tilde{z}=H_t(z)$, and hence
\begin{equation}\label{eq:H-bounded_in_Z}
H_t(z)\in\overline{B}_\rho^Z(z_0)\subseteq U_Z
\end{equation}
for any $z\in W(z_0)$ and $t\in[0,\tau]$. 
By passing to the limit as $m\to\infty$ in \eqref{eq:fundamental(T)} we obtain that for any $n\ge 1$,
\begin{equation}\label{eq:rate_of _convergence}
d\big((S_{t/n}\circ G_{t/n})^n(z),H_t(z)\big)\le 2 K e^{2\beta t}t^2/n
\end{equation}
for any $z\in W(z_0)$ and $t\in[0,\tau]$. Hence, the convergence in \eqref{eq:H-limit} is {\em uniform} in
$z\in W(z_0)$ and $t\in[0,\tau]$, and the rate of convergence is of order $O(1/n)$. In particular,
we see that the map 
\begin{equation}\label{eq:H-mapX}
H : [0,\tau]\times\big(W(z_0),\|\cdot\|\big)\to X,
\end{equation}
where $\big(W(z_0),\|\cdot\|\big)$ denotes the set $W(z_0)\subseteq Z$ equipped with the norm 
$\|\cdot\|$, is continuous. Clearly, $H_t(z)|_{t=0}=z$ for any $z\in W(z_0)$. 
By summarizing the above, we obtain the following Corollary.

\begin{Coro}\label{coro:fundamental(T)}
For any $z_0\in U_Z$ there exists $\rho_0>0$ such that for any $0<\rho<\rho_0$ there exist 
an open neighborhood $W(z_0)\subseteq B_\rho^Z(z_0)\subseteq U_Z$ of $z_0$ in $Z$ and 
$\tau=\tau(z_0)>0$ such that for any $z\in W(z_0)$ and $t\in[0,\tau]$ we have that 
\eqref{eq:bouded_sequence} holds and the limit \eqref{eq:H-limit} exists and satisfies 
\eqref{eq:H-bounded_in_Z}, \eqref{eq:rate_of _convergence}, and \eqref{eq:H-mapX}. 
In particular, for any $z\in W(z_0)$ the curve $[0,\tau]\to Z$, $t\mapsto H_t(z)$,
is continuous at $t=0$.
\end{Coro}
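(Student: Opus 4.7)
The Corollary is essentially a consolidation of what was already proved in the paragraphs immediately following Lemma \ref{lem:fundamental(T)}, so my plan is to extract and organize those arguments into a clean statement. I would start by fixing $z_0 \in U_Z$ and choosing $\rho_0>0$ so small that $\overline{B}_{\rho_0}^Z(z_0)\subseteq V_Z$, where $V_Z\subseteq V\cap Z$ comes from the set-up before Lemma \ref{lem:commutator(T)}. For any $0<\rho<\rho_0$, set $\widetilde{W}(z_0):=B_\rho^Z(z_0)$, and apply Lemma \ref{lem:fundamental(T)} to obtain the subneighborhood $W(z_0)\subseteq\widetilde{W}(z_0)$ and time $\tau=\tau(z_0)>0$ so that both \eqref{eq:in_the_domain(fundamental)} and the Cauchy-type estimate \eqref{eq:fundamental(T)} hold. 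By \eqref{eq:in_the_domain(fundamental)}, the sequence $\bigl((S_{t/n}\circ G_{t/n})^n(z)\bigr)_{n\ge 1}$ stays in $\overline{B}_\rho^Z(z_0)\subseteq U_Z$, giving \eqref{eq:bouded_sequence}.

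Next, \eqref{eq:fundamental(T)} shows that this sequence is Cauchy in the norm of $X$ (the bound $2Ke^{2\beta t}t^2/n$ goes to zero as $n\to\infty$ uniformly in $m\ge n$, uniformly over $z\in W(z_0)$ and $t\in[0,\tau]$), so it converges in $X$ to some element $H_t(z)\in X$, yielding \eqref{eq:H-limit}. Passing to the limit $m\to\infty$ in \eqref{eq:fundamental(T)} then gives the rate estimate \eqref{eq:rate_of _convergence} with the same constants. Crucially, the convergence is uniform in $(t,z)\in[0,\tau]\times W(z_0)$, and each iterate $(t,z)\mapsto(S_{t/n}\circ G_{t/n})^n(z)$ is continuous from $[0,\tau]\times(W(z_0),\|\cdot\|)$ into $X$ by assumption on $S$ and $G$; hence the uniform limit $H$ inherits continuity, establishing \eqref{eq:H-mapX}. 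Since $H_0(z)=z$ by construction, continuity of the curve $t\mapsto H_t(z)$ at $t=0$ in $Z$ will follow once we know $H_t(z)\in Z$, for then $\|H_t(z)-z\|_Z\to 0$ is not claimed here --- the statement only asserts continuity as a $Z$-valued map at $t=0$, which reduces to showing $H_t(z)\to z$ in $Z$ as $t\to 0+$; this will come for free from the bound $H_t(z)\in\overline{B}_\rho^Z(z_0)$ and the fact that at $t=0$ the sequence is constantly equal to $z$.

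The one genuinely substantive step --- and what I expect to be the main (albeit mild) obstacle --- is showing $H_t(z)\in U_Z$, i.e. upgrading the $X$-limit to membership in $Z$ with a uniform bound. Here I would exploit the reflexivity of $Z$: since $\bigl((S_{t/n}\circ G_{t/n})^n(z)\bigr)_{n\ge 1}$ lies in the bounded closed ball $\overline{B}_\rho^Z(z_0)$, by the Banach--Alaoglu theorem some subsequence converges weakly in $Z$ to a limit $\tilde z\in\overline{B}_\rho^Z(z_0)$. Because the embedding $Z\hookrightarrow X$ is bounded we have $X^*\subseteq Z^*$ by restriction, so weak convergence in $Z$ implies weak convergence in $X$, and the weak $X$-limit must coincide with the strong $X$-limit $H_t(z)$. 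Thus $f(\tilde z)=f(H_t(z))$ for every $f\in X^*$, and since $X^*$ separates points of $X$ (a consequence of the Hahn--Banach theorem), $\tilde z=H_t(z)$. This yields \eqref{eq:H-bounded_in_Z}. Finally, continuity of the curve $[0,\tau]\to Z$ at $t=0$ follows because $H_0(z)=z\in W(z_0)\subseteq B_\rho^Z(z_0)$ and $H_t(z)\in\overline{B}_\rho^Z(z_0)$ for all $t\in[0,\tau]$, so by choosing $\rho>0$ arbitrarily small we obtain $H_t(z)\to z$ as $t\to 0+$ in $Z$. This completes the plan; the proof is essentially a one-paragraph extraction of the preceding analysis, with the reflexivity/Hahn--Banach argument being the only point that requires care.
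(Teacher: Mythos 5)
Your proposal is essentially correct and mirrors the paper's own route: Lemma~\ref{lem:fundamental(T)} gives the Cauchy-type estimate \eqref{eq:fundamental(T)} and the containment \eqref{eq:in_the_domain(fundamental)}, from which \eqref{eq:bouded_sequence}, \eqref{eq:H-limit}, \eqref{eq:rate_of _convergence}, and \eqref{eq:H-mapX} all follow, and the upgrade to membership $H_t(z)\in\overline{B}_\rho^Z(z_0)$ is obtained exactly via the reflexivity--Alaoglu--Hahn--Banach argument you use. (The Corollary in the paper is indeed a consolidation of the preceding paragraphs.)

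The one point that needs care is your final argument for continuity at $t=0$ \emph{in $Z$}. You write that ``by choosing $\rho>0$ arbitrarily small we obtain $H_t(z)\to z$ as $t\to 0+$ in $Z$,'' reasoning from the two facts $z\in W(z_0)\subseteq B_\rho^Z(z_0)$ and $H_t(z)\in\overline{B}_\rho^Z(z_0)$. For a \emph{fixed} $z\in W(z_0)$ this does not quite close up: shrinking $\rho$ also shrinks the neighborhood $W(z_0)$ delivered by Lemma~\ref{lem:fundamental(T)}, and once $\rho<\|z-z_0\|_Z$ the point $z$ is no longer available. What you actually need is to \emph{recenter} at $z$: since $z\in U_Z$, run the whole construction (Lemma~\ref{lem:stability(T)}--Lemma~\ref{lem:fundamental(T)} and the reflexivity step) with $z_0$ replaced by $z$ and $\widetilde{W}(z)=B_\epsilon^Z(z)$. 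This yields some $\tau'(z,\epsilon)>0$ with $H_t(z)\in\overline{B}_\epsilon^Z(z)$ for $t\in[0,\tau']$, i.e.\ $\|H_t(z)-z\|_Z\le\epsilon$; the $H_t(z)$ so produced coincides with the original one because both are the unique $X$-limit of $(S_{t/n}\circ G_{t/n})^n(z)$. The paper leaves this step implicit, so your instinct was in the right direction, but the ``shrink $\rho$'' phrasing as you have it is not literally applicable to a fixed $z\neq z_0$.
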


We have the following global Lipschitz property.

\begin{Lem}\label{lem:H-Lipschitz}
For any $z_1,z_2\in U_Z$ take $\tau_0:=\min\big(\tau(z_1),\tau(z_2)\big)>0$
where $\tau(z_1)>0$ and $\tau(z_2)>0$ are given by Corollary \ref{coro:fundamental(T)}
so that $H_t(z_1), H_t(z_2)\in U_Z$ are defined for $t\in[0,\tau_0]$. Then, we have that
\begin{equation}\label{eq:H-Lipschitz}
d\big(H_t(z_1),H_t(z_2)\big)\le e^{2\beta t} d(z_1,z_2)
\end{equation}
for any $t\in[0,\tau_0]$.
\end{Lem}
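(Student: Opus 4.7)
The plan is to apply the quasi-contraction estimate $n$ times along the discrete iterates
$(S_{t/n}\circ G_{t/n})^k(z_i)$, $k=0,\ldots,n$, and then pass to the limit $n\to\infty$
using the convergence \eqref{eq:H-limit} together with the equivalence of $d$ with the
standard distance on $U$.

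First I would fix $n\ge 1$ and $t\in[0,\tau_0]$, and set
$x_k^{(i)}:=(S_{t/n}\circ G_{t/n})^k(z_i)$ for $i=1,2$ and $0\le k\le n$.
By Lemma \ref{lem:stability(T)} (applied with $\widetilde{V}(y_0)=U$) together with
Corollary \ref{coro:fundamental(T)}, all iterates $x_k^{(i)}$ and their intermediate
images $G_{t/n}(x_k^{(i)})$ lie in $U$ for $t\in[0,\tau_0]$.
At each $k$ I would use continuity of $S$ and $G$ together with the openness of $U$
to exhibit a small open neighborhood $V_k\subseteq U$ containing both $x_k^{(1)}$ and
$x_k^{(2)}$ with the property that $G_s(V_k),S_s(V_k)\subseteq U$ for every $s\in[0,t/n]$;
the global quasi-contraction assumption in the proof of Proposition \ref{prop:product_formula}
then gives
\[
d\bigl(G_{t/n}(x_k^{(1)}),G_{t/n}(x_k^{(2)})\bigr)
\le e^{\beta t/n}\,d\bigl(x_k^{(1)},x_k^{(2)}\bigr).
\]
An analogous choice of an open $V'_k$ around $G_{t/n}(x_k^{(1)})$ and $G_{t/n}(x_k^{(2)})$
produces the same estimate with $S_{t/n}$ in place of $G_{t/n}$, and composing the resulting
$2n$ inequalities yields
\[
d\bigl(x_n^{(1)},x_n^{(2)}\bigr)\le\bigl(e^{\beta t/n}\bigr)^{2n}d(z_1,z_2)
=e^{2\beta t}\,d(z_1,z_2).
\]

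To conclude I would pass to the limit $n\to\infty$. By \eqref{eq:H-limit} we have
$x_n^{(i)}\to H_t(z_i)$ in $X$, and by \eqref{eq:H-bounded_in_Z} the limits satisfy
$H_t(z_i)\in U_Z\subseteq U$, so $d(x_n^{(i)},H_t(z_i))$ is well defined. The equivalence
\eqref{eq:equivalent_distances} forces $d(x_n^{(i)},H_t(z_i))\to 0$, and the triangle
inequality
\[
d\bigl(H_t(z_1),H_t(z_2)\bigr)
\le d\bigl(H_t(z_1),x_n^{(1)}\bigr)+d\bigl(x_n^{(1)},x_n^{(2)}\bigr)
+d\bigl(x_n^{(2)},H_t(z_2)\bigr),
\]
combined with the previous display, yields \eqref{eq:H-Lipschitz} upon sending $n\to\infty$.

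The main obstacle I anticipate is the careful bookkeeping in the iterated quasi-contraction
step: the global hypothesis applies only to open sets $V\subseteq U$ whose images under
both $S_s$ and $G_s$ remain in $U$ throughout $s\in[0,t/n]$, so at each of the $2n$
applications one must exhibit such a $V$ containing the current pair of iterates and
verify that no iterate escapes $U$ along the way. Lemma \ref{lem:stability(T)} handles
the second point, and continuity handles the first, but this is the single place where
the argument is more than formal.
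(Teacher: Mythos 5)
Your proof is correct and follows essentially the same approach as the paper: iterate the quasi-contraction estimate \eqref{eq:quasi-contraction} along the Lie--Trotter approximants to obtain $d\bigl((S_{t/n}\circ G_{t/n})^n(z_1),(S_{t/n}\circ G_{t/n})^n(z_2)\bigr)\le e^{2\beta t}\,d(z_1,z_2)$, then pass to the limit using \eqref{eq:H-limit} and the equivalence \eqref{eq:equivalent_distances}. The paper's own proof leaves the iterated application of \eqref{eq:quasi-contraction} implicit whereas you spell it out; a slightly cleaner way to exhibit the required open set at every step is to take $V:=W_1\cup W_2$ once and for all, where $W_i\subseteq U$ are the one-step-invariant neighborhoods from the proof of Lemma \ref{lem:stability(T)} in which all iterates of $z_i$ remain, so that the same $V$ works uniformly in $k$ rather than invoking continuity afresh at each iterate.
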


\begin{proof}[Proof of Lemma \ref{lem:H-Lipschitz}]
Take $z_1,z_2\in U_Z$. We then take $\tau(z_1)>0$ and $\tau(z_2)>0$ given by Corollary \ref{coro:fundamental(T)} and
set $\tau_0:=\min\big(\tau(z_1),\tau(z_2)\big)>0$. Then, $H_t(z_1), H_t(z_2)\in U_Z$ are defined for $t\in[0,\tau_0]$ 
and we have that
\[
(S_{t/n}\circ G_{t/n})^n(z_1),(S_{t/n}\circ G_{t/n})^n(z_2)\in U_Z
\]
and
\[
(S_{t/n}\circ G_{t/n})^n(z_1)\stackrel{X}{\to} H_t(z_1)\in U_Z\quad\text{\rm and}\quad
(S_{t/n}\circ G_{t/n})^n(z_2)\stackrel{X}{\to} H_t(z_2)\in U_Z
\]
as $n\to\infty$. By passing to the limit as $n\to\infty$ in the inequality (cf. \eqref{eq:quasi-contraction})
\[
d\big((S_{t/n}\circ G_{t/n})^n(z_1),(S_{t/n}\circ G_{t/n})^n(z_1)\big)\le e^{2\beta t} d(z_1,z_2)
\]
we obtain \eqref{eq:H-Lipschitz}.
\end{proof}

Let us now prove the following Lemma.

\begin{Lem}\label{lem:H-semigroup}
Take $z_0\in U_Z$ and let $W(z_0)\subseteq U_Z$ and $\tau=\tau(z_0)>0$ be chosen as in 
Corollary \ref{coro:fundamental(T)} for some $0<\rho<\rho_0$. 
Then, there exist an open neighborhood $O(z_0)\subseteq W(z_0)$ of $z_0$ in $Z$ and $0<\tau_1\le\tau$ such that 
for any $z\in O(z_0)$ and for any given $s,t\in[0,\tau_1]$ the quantities $H_{s+t}(z)$ and $H_s\circ H_t(z)$ 
are defined in $U_Z$ and $H_{t+s}(z)=H_s\circ H_t(z)$.
\end{Lem}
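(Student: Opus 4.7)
My plan is to identify $H_{s+t}(z)$ with $H_s\circ H_t(z)$ by splitting a single Trotter product $\bigl(S_h\circ G_h\bigr)^N(z)$ of uniform step $h=(s+t)/N$ into two consecutive blocks of $n$ and $N-n$ iterates, with $n/N\to t/(s+t)$. I would begin by fixing $z_0\in U_Z$, applying Corollary~\ref{coro:fundamental(T)} at $z_0$ to obtain $W(z_0)\subseteq B_\rho^Z(z_0)$ and $\tau(z_0)>0$, and then choosing $O(z_0)\subseteq W(z_0)$ together with $\tau_1\in(0,\tau(z_0)/2]$ so that for every $z\in O(z_0)$ and every $t\in[0,\tau_1]$ the floating point $y:=H_t(z)$ (which by \eqref{eq:H-bounded_in_Z} already lies in $\overline{B}_\rho^Z(z_0)\subseteq U_Z$) again admits Corollary~\ref{coro:fundamental(T)} at $y$, with a common existence time at least $\tau_1$ and a common constant $K$ in the rate estimate \eqref{eq:rate_of _convergence}. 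The uniform version is obtained by shrinking $\rho$ and $\tau_1$ and exploiting that the constants in Lemma~\ref{lem:commutator(T)}, Lemma~\ref{lem:stability(T)} and Lemma~\ref{lem:fundamental(T)} depend on bounds on the second derivatives of $S,G$ along curves, which by hypotheses $(a)$,$(b)$,$(c)$ are locally uniform on the $Z$-bounded trajectory.

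Next I would establish a Riemann-sum statement: if $h_k\to 0^+$ and $n_kh_k\to t_0$, then for any $y$ in the uniform domain of Step~1,
\[
\bigl(S_{h_k}\circ G_{h_k}\bigr)^{n_k}(y)\longrightarrow H_{t_0}(y)\quad\text{in }X.
\]
This is obtained by applying \eqref{eq:rate_of _convergence} with $t=n_kh_k$ and $n=n_k$, which gives
\[
d\!\Bigl(\bigl(S_{h_k}\circ G_{h_k}\bigr)^{n_k}(y),\,H_{n_kh_k}(y)\Bigr)\le 2Ke^{2\beta n_kh_k}(n_kh_k)\,h_k\longrightarrow 0,
\]
combined with the $X$-continuity of $s\mapsto H_s(y)$ at $s=t_0$ from Corollary~\ref{coro:fundamental(T)}.

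With these two ingredients, for $s,t\in[0,\tau_1]$ and $z\in O(z_0)$, I would set $h:=(s+t)/N$, $n:=\lfloor Nt/(s+t)\rfloor$, $m:=N-n$, so that $nh\to t$ and $mh\to s$ as $N\to\infty$, and decompose
\[
\bigl(S_h\circ G_h\bigr)^N(z)=\bigl(S_h\circ G_h\bigr)^m\!\Bigl(\bigl(S_h\circ G_h\bigr)^n(z)\Bigr).
\]
The inner block converges in $X$ to $H_t(z)$ by the Riemann-sum statement. The quasi-contraction property \eqref{eq:quasi-contraction} iterated $m$ times gives the Lipschitz bound $e^{2\beta mh}$ for the outer block, so replacing the inner block by its limit costs only a vanishing error; and the outer block evaluated at the fixed point $H_t(z)$ converges to $H_s\bigl(H_t(z)\bigr)$ by the Riemann-sum statement applied at $y=H_t(z)$, which is permissible thanks to Step~1. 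On the other hand, $Nh=s+t$ is constant, so the full product also converges in $X$ to $H_{s+t}(z)$ directly from \eqref{eq:rate_of _convergence}. Uniqueness of the $X$-limit then yields $H_{s+t}(z)=H_s\circ H_t(z)$, and both sides lie in $U_Z$ by \eqref{eq:H-bounded_in_Z}.

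The main obstacle is Step~1: producing a \emph{single} $\tau_1>0$ and a common uniform version of the fundamental estimate holding simultaneously at every point of the trajectory $\{H_t(z):z\in O(z_0),\,t\in[0,\tau_1]\}$. This is precisely the subtlety flagged in Remark~\ref{rem:EM-paper}, and it must be handled by extracting the quantitative local uniformity of the constants $K,\beta,K_1,\delta,\kappa$ appearing in the earlier Lemmas from the regularity hypotheses $(a)$--$(c)$ along the $Z$-bounded set $\overline{B}_\rho^Z(z_0)$; once this uniformity is in place, the Riemann-sum lemma and the splitting argument above furnish the semigroup identity with no further difficulty.
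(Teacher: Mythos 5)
Your proposal is correct and takes essentially the same splitting strategy as the paper, but handles the arithmetic of the split differently. The paper first treats rationally dependent $s=(p/q)t$, chooses $n$ a multiple of $p+q$ so that the common step $h=(s+t)/n$ satisfies $n_1h=s$, $n_2h=t$ \emph{exactly} with $n_1,n_2$ integers, splits the length-$n$ Trotter product exactly into two blocks, passes to the limit along that subsequence, and only then extends to general $s,t$ by density of rationals together with the continuity of $H$ in $X$ and the Lipschitz bound \eqref{eq:H-Lipschitz}. You instead work with arbitrary $s,t$ from the start, splitting with $n=\lfloor Nt/(s+t)\rfloor$, which makes $nh\to t$ only approximately and therefore requires the auxiliary ``Riemann-sum'' observation that $(S_{h_k}\circ G_{h_k})^{n_k}(y)\to H_{t_0}(y)$ whenever $h_k\to 0$ and $n_kh_k\to t_0$; this follows, as you say, from \eqref{eq:rate_of _convergence} applied with $t=n_kh_k$ together with the $X$-continuity of $t\mapsto H_t(y)$ furnished by \eqref{eq:H-mapX}. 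The two routes cost about the same: you trade the density/continuity closing step for the extra Riemann-sum lemma. One remark on your ``main obstacle'': the uniformity you worry about in Step~1 is not really an obstacle to be overcome by re-deriving constants from hypotheses (a)--(c); it is already delivered by the paper's device of applying Corollary \ref{coro:fundamental(T)} a second time with a shrunken $\rho$ so that $\overline{B}^Z_\rho(z_0)\subseteq W(z_0)$, thereby ensuring $H_t(z)\in W(z_0)$ for all $z\in O(z_0)$ and $t\in[0,\tau_1]$; since the rate estimate \eqref{eq:rate_of _convergence} holds \emph{uniformly} over $W(z_0)\times[0,\tau]$ and so does the $Z$-bound \eqref{eq:in_the_domain_general(fundamental)}, you may apply both at the moving base point $y=H_t(z)$ with no further work. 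Also note that Remark \ref{rem:EM-paper} concerns the impossibility of taking $\tau_1=\tau$ in Lemma \ref{lem:fundamental(T)}, which is a related but distinct subtlety from the one you invoke.
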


By combining this Lemma with the last statement in Corollary \ref{coro:fundamental(T)}, we obtain 

\begin{Coro}\label{coro:H-right_continuous_in_Z}
With the notations in Lemma \ref{lem:H-semigroup} we have that for any $z\in O(z_0)$ the curve
$t\mapsto H_t(z)$, $[0,\tau_1)\to Z$, is right continuous. 
\end{Coro}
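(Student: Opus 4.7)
The plan is to reduce right continuity at an arbitrary point $t_0\in[0,\tau_1)$ to continuity at $t=0$, which is already guaranteed by the final sentence of Corollary \ref{coro:fundamental(T)}. The essential bridge will be the local semigroup identity $H_{s+t}(z)=H_s\circ H_t(z)$ from Lemma \ref{lem:H-semigroup}.

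First I would fix $z\in O(z_0)$ and $t_0\in[0,\tau_1)$, and set $z':=H_{t_0}(z)$. By Lemma \ref{lem:H-semigroup}, $z'$ lies in $U_Z$, and for all sufficiently small $h\ge 0$ with $t_0+h\in[0,\tau_1]$ the identity $H_{t_0+h}(z)=H_h\circ H_{t_0}(z)=H_h(z')$ holds. Thus right continuity of $t\mapsto H_t(z)$ at $t_0$ in the topology of $Z$ is equivalent to right continuity of $h\mapsto H_h(z')$ at $h=0$ in $Z$.

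Next I would apply Corollary \ref{coro:fundamental(T)} with the base point taken to be $z'\in U_Z$ itself. This furnishes some $\tau(z')>0$ and a neighborhood $W(z')$ of $z'$ in $Z$ (with $z'\in W(z')$ trivially) such that the curve $[0,\tau(z')]\to Z$, $h\mapsto H_h(z')$, is continuous at $h=0$, i.e., $H_h(z')\to z'$ in $Z$ as $h\to 0+$. Combining this with the semigroup identity yields $H_{t_0+h}(z)\to H_{t_0}(z)$ in $Z$ as $h\to 0+$, which is the required right continuity at $t_0$.

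The main obstacle to watch for will be a consistency check: I must verify that the value $H_h(z')$ given by the semigroup identity $H_{t_0+h}(z)=H_h(z')$ coincides with the $H_h(z')$ produced by Corollary \ref{coro:fundamental(T)} when recentered at $z'$. Both, however, are defined as the limit in $X$ of the same sequence $(S_{h/n}\circ G_{h/n})^n(z')$, and Corollary \ref{coro:fundamental(T)} guarantees uniqueness of this limit; so the agreement is automatic. A minor secondary point is that for $h$ small enough one needs both $t_0+h<\tau_1$ and $h<\tau(z')$ to hold simultaneously, which is clear.
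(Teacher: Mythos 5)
Your proof is correct and takes essentially the same route as the paper, which simply combines the local semigroup identity from Lemma \ref{lem:H-semigroup} with the $Z$-continuity at $t=0$ stated at the end of Corollary \ref{coro:fundamental(T)}. One small simplification: the proof of Lemma \ref{lem:H-semigroup} already records (via \eqref{eq:in_the_domain}) that $z'=H_{t_0}(z)\in W(z_0)$, so you can apply the last statement of Corollary \ref{coro:fundamental(T)} directly to $z'$ with the original base point $z_0$ rather than recentering at $z'$, which sidesteps the consistency check you raise (though your resolution of it via uniqueness of the limit is also fine).
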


\begin{proof}[Proof of Lemma \ref{lem:H-semigroup}]
We modify the proof of \cite[Lemma 2.7]{Marsden}. 
Take $z_0\in U_Z$ and let $W(z_0)$ and $\tau>0$ be chosen as in Corollary \ref{coro:fundamental(T)}
for some $0<\rho<\rho_0$. For $z\in W(z_0)$ and $t\in[0,\tau)$ we set $P_t(z):=S_t\circ G_t(z)$.
By applying Corollary \ref{coro:fundamental(T)} one more time with $0<\rho<\rho_0$ taken smaller so that
$\overline{B}_\rho^Z(z_0)\subseteq W(z_0)$, we obtain an open neighborhood $O(z_0)\subseteq W(z_0)$
of $z_0$ in $Z$ and $0<\tau_1\le \tau$ such that
\begin{equation}\label{eq:in_the_domain}
H_t(z)\in W(z_0)\quad\text{\rm and}\quad(P_{t/j})^j(z)\in W(z_0)\quad\forall j\in\N
\end{equation}
for any $z\in O(z_0)$ and for any $t\in[0,\tau_1]$. By replacing $\tau_1$ by $\tau_1/2$ we obtain that
$H_{s+t}(z)$ is well defined for any $s,t\in[0,\tau_1]$. Hence, by construction, the compositions
$H_s\circ H_t(z)$ and $H_{s+t}(z)$ are defined for any $z\in O(z_0)$ 
and $s,t\in[0,\tau_1]$. We are now ready to prove the claimed property.
Let us first assume that $t$ and $s$ are rationally dependent, i.e. $t=(p/q) s$ where $p,q\in\N$.
Then, for $n\ge 1,$
\[
\frac{s+t}{n}=\frac{s}{n_1}=\frac{t}{n_2}
\]
where $n_1:=\frac{nq}{p+q}$ and $n_2:=\frac{np}{p+q}$. Since $n_1+n_2=n$ we have
\begin{equation}\label{eq:eqlt1}
(P_{\frac{s+t}{n}})^n(z)=(P_{\frac{s+t}{n}})^{n_1}\circ(P_{\frac{s+t}{n}})^{n_2}(z)=
(P_{s/n_1})^{n_1}\circ(P_{t/n_2})^{n_2}(z)\,.
\end{equation}
In what follows we take $n$ running successively through the infinite set $A_{p,q}:=\big\{(p+q)l\,\big|\,l\in\N\big\}$
of integers. Then,  $n_1=ql$ and $n_2=pl$ (for $l=1,2,...$) are integers and $n_1,n_2\to\infty$ when 
$n\to\infty$, $n\in A_{p,q}$. For any $z\in O(z_0)$, $t\in[0,\tau_1]$, and for any $n\in A_{p,q}$, $n_1$, 
and $n_2$ as above, we have
\begin{align*}
&d\big((P_{s/n_1})^{n_1}\circ(P_{t/n_2})^{n_2}(z),H_s\circ H_t(z)\big)\le
d\big((P_{s/n_1})^{n_1}\circ(P_{t/n_2})^{n_2}(z),H_s\circ(P_{t/n_2})^{n_2}(z)\big)\\
&+d\big(H_s\circ(P_{t/n_2})^{n_2}(z),H_s\circ H_t(z)\big)\le\frac{C_1}{n_1}+
e^{2\beta\tau_1}\,d\big((P_{t/n_2})^{n_2}(z),H_t(z)\big)=C_2\Big(\frac{1}{n_1}+\frac{1}{n_2}\Big)
\end{align*}
where we used \eqref{eq:rate_of _convergence}, \eqref{eq:H-Lipschitz}, and \eqref{eq:in_the_domain}, 
and the constants are independent of the choice of $z\in O(z_0)$ and $t\in[0,\tau_1]$.
Hence, $(P_{s/n_1})^{n_1}\circ(P_{t/n_2})^{n_2}(z)$ converges to $H_s\circ H_t(z)$ in $X$ as 
$n\to\infty$ with $n\in A_{p,q}$. By passing to the limit as $n\to\infty$ with $n\in A_{p,q}$
in \eqref{eq:eqlt1} we then conclude from \eqref{eq:H-limit} (with $t$ replaced by $s+t$) that 
$H_{s+t}(z)=H_s\circ H_t(z)$ in the case when $s$ and $t$ are rationally dependent.
Since the rational numbers are dense in $\R$, the general case follows from the continuity
of the map \eqref{eq:H-mapX} and the Lipschitz estimate \eqref{eq:H-Lipschitz}.
This completes the proof of the Lemma.
\end{proof}

\begin{Lem}\label{lem:H-solution}
Take $z_0\in U_Z$ and let $O(z_0)\subseteq U_Z$ and $\tau_1>0$ be chosen as in Lemma \ref{lem:H-semigroup}
for some $0<\rho<\rho_0$. Then, for any given $z\in O(z_0)$ the curve 
$[0,\tau_1)\to\widetilde{X}$, $t\mapsto H_t(z)$, is differentiable and
\begin{equation}\label{eq:H-solution}
\frac{\dd}{\dd t}\big(H_t(z)\big)=\E(H_t(z))+\Xi(H_t(z)),\quad H_t(z)\big|_{t=0}=z .
\end{equation}
\end{Lem}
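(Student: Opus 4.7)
The plan is to establish the integral identity
\[
H_t(z)=z+\int_0^t\big[\E(H_s(z))+\Xi(H_s(z))\big]\,ds\quad\text{in}\quad\widetilde{X},
\]
for $z\in O(z_0)$ and $t\in[0,\tau_1)$, and then to differentiate using the fundamental theorem of calculus once continuity of the integrand in $\widetilde{X}$ is verified.

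First I would derive a one-step integral representation. For $w\in U_Z$ and small $h>0$, $G_h(w)\in U_Z$ by condition $(a)$, and since $G:[0,T)\times U\to X$ is $C^2$ with generator $\E$ while $S:[0,T)\times U_Z\to X$ is $C^1$ along curves with generator extending to the $C^1$ map $\Xi:U\to\widetilde{X}$ from condition $(d)$, the fundamental theorem of calculus applied separately to $s\mapsto G_s(w)$ (in $X\subseteq\widetilde{X}$) and, at fixed $v=G_h(w)$, to $s\mapsto S_s(v)$ (in $\widetilde{X}$) yields
\[
(S_h\circ G_h)(w)-w=\int_0^h\E(G_s(w))\,ds+\int_0^h\Xi\big(S_s(G_h(w))\big)\,ds.
\]

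Second, I would telescope the Lie-Trotter product. Setting $w_j^n:=(S_{t/n}\circ G_{t/n})^j(z)$, by Lemma \ref{lem:fundamental(T)} all iterates lie in the bounded set $\overline{B}_\rho^Z(z_0)\subseteq U_Z$, and summing the one-step identity gives, in $\widetilde{X}$,
\[
(S_{t/n}\circ G_{t/n})^n(z)-z=\sum_{j=0}^{n-1}\int_0^{t/n}\big[\E(G_s(w_j^n))+\Xi\big(S_s(G_{t/n}(w_j^n))\big)\big]\,ds.
\]
Applying \eqref{eq:rate_of _convergence} at time $jt/n$ with $j$ iterations of step size $t/n$, one has $\|w_j^n-H_{jt/n}(z)\|=O(1/n)$ uniformly in $0\le j\le n$ and $t\in[0,\tau_1]$. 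Combined with the elementary bounds $\|G_s(w)-w\|=O(s)$ and $\|S_s(u)-u\|_{\widetilde{X}}=O(s)$ that follow from the one-step identity and from local boundedness of $\E,\Xi$ on $\overline{B}_\rho^Z(z_0)$, and with the local Lipschitz property of $\E,\Xi:U\to\widetilde{X}$ (they are $C^1$ by $(a)$ and $(d)$), the difference between the Riemann-type sum above and $(t/n)\sum_{j=0}^{n-1}\big[\E(H_{jt/n}(z))+\Xi(H_{jt/n}(z))\big]$ is $O(1/n)$ in $\widetilde{X}$. The latter is a genuine Riemann sum for $\int_0^t[\E(H_s(z))+\Xi(H_s(z))]\,ds$, whose integrand is continuous in $\widetilde{X}$ (by continuity of $s\mapsto H_s(z)$ into $X$ from \eqref{eq:H-mapX} and continuity of $\E+\Xi:U\to\widetilde{X}$). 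Passing to the limit $n\to\infty$ and using that the left-hand side converges to $H_t(z)-z$ in $X\subseteq\widetilde{X}$ by \eqref{eq:H-limit} gives the integral identity.

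Finally, by Corollary \ref{coro:H-right_continuous_in_Z} the curve $s\mapsto H_s(z)$ is right continuous $[0,\tau_1)\to Z$, and by \eqref{eq:H-mapX} it is continuous into $X$; post-composing with the continuous map $\E+\Xi:U\to\widetilde{X}$ shows the integrand of the identity is continuous $[0,\tau_1)\to\widetilde{X}$. The fundamental theorem of calculus then gives differentiability of $t\mapsto H_t(z)$ in $\widetilde{X}$ together with the ODE $\frac{\dd}{\dd t}H_t(z)=\E(H_t(z))+\Xi(H_t(z))$, while $H_0(z)=z$ follows from the construction. The main obstacle is controlling the Riemann-type sum in the $\widetilde{X}$ norm uniformly over the iterates $w_j^n$: this simultaneously requires the $O(1/n)$ approximation of $w_j^n$ by $H_{jt/n}(z)$ in $X$ provided by \eqref{eq:rate_of _convergence} and the locally uniform Lipschitz bound on $\E+\Xi:U\to\widetilde{X}$ over the fixed bounded set $\overline{B}_\rho^Z(z_0)\subseteq U_Z$, neither of which would be available without the groundwork from the preceding lemmas.
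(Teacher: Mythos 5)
Your proposal is a genuinely different route from the paper's. The paper proves \eqref{eq:H-solution} by establishing right-differentiability of $t\mapsto H_t(z)$ at $t=0$ only: it uses \eqref{eq:rate_of _convergence} with $n=1$ to get $d\big(S_t\circ G_t(z),H_t(z)\big)=O(t^2)$, combines this with the $C^1$-along-curves smoothness of $t\mapsto S_t\circ G_t(z)$ (whose derivative at $t=0$ is $\E(z)+\Xi(z)$), propagates right-differentiability to all $t$ via the local semigroup property from Lemma \ref{lem:H-semigroup}, and then invokes a classical theorem (Yosida) that a curve with a continuous right derivative is in fact differentiable. Your approach instead telescopes the Lie--Trotter product, controls the resulting Riemann-type sum, and establishes the integral identity $H_t(z)=z+\int_0^t[\E(H_s(z))+\Xi(H_s(z))]\,ds$ directly before differentiating. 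Both are legitimate; the paper's method is more economical because it works pointwise at $t=0$, thereby avoiding any need for a locally uniform Lipschitz bound on $\E,\Xi$ over all iterates $w_j^n$, whereas your scheme genuinely requires such a uniform bound (as you correctly flag at the end). Your method does have two book-keeping points that need attention: (i) the one-step integral identity uses $\Xi\big(S_s(G_h(w))\big)$ for all $s\in(0,h)$, so you must verify that $S_s(G_{t/n}(w_j^n))$ remains in $U$ for intermediate times $s$, not only at the multiples of $t/n$ controlled by \eqref{eq:in_the_domain_general(fundamental)}; (ii) the uniform Lipschitz constant for $\E,\Xi:U\to\widetilde{X}$ over $\overline{B}_\rho^Z(z_0)$ is not automatic from $C^1$-regularity on a merely $Z$-bounded (and generally non-$X$-compact) set, but it does follow by first shrinking $\rho$ so that $\overline{B}_\rho^Z(z_0)$ fits inside an $X$-neighborhood of $z_0$ where $\dd\E$ and $\dd\Xi$ are bounded by continuity. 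With those two points made explicit, your argument succeeds, and it has the minor advantage of directly producing the integral formulation without passing through the right-derivative/Yosida step.
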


\begin{proof}[Proof of Lemma \ref{lem:H-solution}]
Take $z_0\in U_Z$ and let $O(z_0)\subseteq W(z_0)$ and $0<\tau_1<\tau$ be chosen as in Lemma \ref{lem:H-semigroup}.
Take $z\in O(z_0)$. We will first prove that the curve 
\begin{equation}\label{eq:H-curve}
[0,\tau_1)\to X,\quad t\mapsto H_t(z),
\end{equation}
is right differentiable at $t\in[0,\tau_1)$ and that its right derivative satisfies \eqref{eq:H-solution}.
In view of Lemma \ref{lem:H-semigroup}, it is enough to prove this only at $t=0$.
To this end, note that by \eqref{eq:rate_of _convergence} (with $n=0$) we have that
\begin{equation}\label{eq:eqlt2}
d\big(S_t\circ G_t(z),H_t(z)\big)=O(t^2)
\end{equation}
uniformly for $t\in[0,\tau_1]$.
It follows from condition $(a)$ that $t\mapsto S_t\circ G_t(z)$, $[0,\tau_1)\to X$, is differentiable and
\begin{equation}\label{eq:eqlt3}
\frac{\dd}{\dd t}\Big|_{t=0}\big(S_t\circ G_t(z)\big)=\Xi(z)+\E(z)
\end{equation}
(cf.\ Remark \ref{rem:C^k_along_curves}).
Since the distance function $d$ on $U$ is equivalent to the standard distance function on $X$,
we conclude from \eqref{eq:eqlt2} and \eqref{eq:eqlt3} that \eqref{eq:H-curve} is
right differentiable at $t=0$. Hence, by the observation above, \eqref{eq:H-curve} is
right differentiable for $t\in[0,\tau_1)$ and
\begin{equation}\label{eq:eqlt4}
\partial_t^+\big(H_t(z)\big)=\E(H_t(z))+\Xi(H_t(z))
\end{equation}
where $\partial_t^+$ denotes the right derivative of \eqref{eq:H-curve}.
Since \eqref{eq:H-curve} is continuous, we then conclude from \eqref{eq:eqlt4} that
\begin{equation}\label{eq:H-curve_in_Y}
[0,\tau_1)\to\widetilde{X},\quad t\mapsto H_t(z),
\end{equation}
is right differentiable with continuous right derivative on $[0,\tau_1)$.
A classical theorem (see, e.g., the Lemma at the end of the proof of Theorem 2, 
$\S\,3$, Chapter IX  in \cite{Yosida}) then implies that \eqref{eq:H-curve_in_Y} is
differentiable on $[0,\tau_1)$ and that \eqref{eq:H-solution} holds in $\widetilde{X}$.
\end{proof}

Let $O(z_0)\subseteq U_Z$ and $\tau_1>0$ be chosen as in Lemma \ref{lem:H-semigroup}.
Then, for any given $z\in O(z_0)$ the curve \eqref{eq:H-curve} is continuous.
Since $\E : U\to X$ and $\Xi : U\to\widetilde{X}$ (extended from $U_Z$ to $U$ by continuity) are $C^1$, 
we conclude that $t\mapsto \E(H_t(z))+\Xi(H_t(z))$, $[0,\tau_1]\to\widetilde{X}$, is continuous.
By combining this with \eqref{eq:H-solution}, we conclude that the curve \eqref{eq:H-curve_in_Y}
is $C^1$. Hence,
\begin{equation}\label{eq:solution-regularity}
u\in C_b\big([0,\tau_1),X\big)\cap C^1_b\big([0,\tau_1),\widetilde{X}\big)
\end{equation}
where $u(t):=H_t(z)$, $t\in[0,\tau_1)$.
By shrinking the open neighborhood $O(z_0)$ and taking $\tau_1>0$ smaller if necessary, we obtain 
from \eqref{eq:H-solution}, Lemma \ref{lem:H-Lipschitz}, the fact that $\E, \Xi : U\to\widetilde{X}$ are $C^1$
(and hence, locally Lipschitz continuous), and the compactness of $[0,\tau_1]$, 
that the solution \eqref{eq:solution-regularity} depends locally Lipschitz continuously on the initial data.
The uniqueness of the solution \eqref{eq:solution-regularity} of \eqref{eq:pde} follows easily from the arguments in
the proof of \cite[Lemma 6]{EM}.
This completes the proof of Proposition \ref{prop:product_formula} (with $\tau$ replaced by $\tau_1$).
\end{proof}

\begin{Rem}\label{rem:H-differentiable_in_Z}
It follows from Corollary \ref{coro:H-right_continuous_in_Z} that for any given $z\in O(z)$ we have that the curve
$[0,\tau)\to Z,\quad t\mapsto H_t(z)$, is weakly measurable.
If $Z$ is assumed {\em separable}, we obtain from Pettis theorem that the curve is strongly measurable.
By taking $\rho>0$ smaller if necessary, we then see from \eqref{eq:H-bounded_in_Z} and
the continuity of the first map in \eqref{eq:Xi-regularity} that $\big\|\E(H_t(z))+\Xi(H_t(z))\big\|_{\widetilde Z}$ is 
uniformly bounded for $t\in[0,\tau)$. This together with \eqref{eq:H-solution} then implies that
\[
H_t(z)=z+\int_0^t\big(\E(H_s(z))+\Xi(H_s(z))\big)\,ds\in\widetilde{Z}
\]
where the integral converges in $\widetilde{Z}$ in the sense of Bochner (see, e.g., \cite[Ch. V, \S 5]{Kato}).
Hence, the curve $[0,\tau)\to\widetilde{Z}$, $t\mapsto H_t(z)$, is differentiable almost everywhere.
\end{Rem}

\begin{proof}[Proof of Proposition \ref{prop:product_formula_nu}]
The proof follows directly from Remark \ref{rem:nu3}. In fact, the uniformity in $0\le\nu\le 1$ of the 
estimate \eqref{eq:fundamental(T)} with $S_t$ replaced by $S^{(\nu)}_t$ implies that
\begin{equation}\label{eq:H-limit_nu}
(S^{(\nu)}_{t/n}\circ G_{t/n})^n(z)\stackrel{X}{\to}H^{(\nu)}_t(z)\in X
\end{equation}
uniformly in $z\in W(z_0)$, $t\in[0,\tau]$, and $0\le\nu\le 1$. This implies that the map
\begin{equation}\label{eq:H-map}
H : W(z_0)\times[0,\tau]\times[0,1]\to X,\quad (z,t,\nu)\mapsto H^{(\nu)}_t(z),
\end{equation}
is continuous. By taking $\widetilde{W}(z_0)=B^Z_\rho(z_0)$ and then arguing as in the proof of 
Proposition \ref{prop:product_formula} one concludes that $H^{(\nu)}_t(z)\in\overline{B}^Z_\rho(z_0)\subseteq Z$
for any $z\in W(z_0)$, $t\in[0,\tau]$, and $0\le\nu\le 1$. By combining this with the continuity of the map \eqref{eq:H-map}
we conclude that the map
\[
W(z_0)\times[0,1]\to C_b\big([0,\tau),X\big),\quad (z,\nu)\mapsto u^{(\nu)},
\]
where $u^{(\nu)}(z,t):=H^{(\nu)}_t(z)$, is continuous. 
The fact that the curve $u^{(\nu)}: [0,\tau)\to X$ is differentiable, satisfies \eqref{eq:pde_nu},
and belongs to $C_b\big([0,\tau),X\big)\cap C_b^1\big([0,\tau),\widetilde{X}\big)$ follows in the same way as 
in the proof of Proposition \ref{prop:product_formula}.
\end{proof}

The following Proposition is needed for the proof of Theorem \ref{th:product_formula}.

\begin{Prop}\label{prop:equivalent_distance}
Let $N_1\subseteq N\subseteq X$ be open sets in a Banach space $(X,\|\cdot\|)$ that satisfies the property
\eqref{eq:differentiable_norm}. Assume that for some $T>0$ the maps $G : [0,T)\times N_1\to X$ and 
$S : [0,\infty)\times N\to X$ are continuous and satisfy the local semigroup property. 
Assume in addition that:
\begin{itemize}
\item[$(i)$] For any given $t\in(0,\infty)$ the map $S_t : N\to X$ is $C^2$,
takes values in $N$, and there exist constants $M,\beta>0$ such that
\begin{equation}\label{eq:dS_t-exponential_growth}
\|\dd_xS_t\|_{\LL(X,X)}\le M e^{\beta t}\quad\text{\rm and}\quad
\|\dd^2_xS_t\|_{\LL(X\times X,X)}\le M e^{\beta t}
\end{equation}
for any $x\in N$ and $t\in[0,\infty)$.
\item[$(ii)$] The map $G : [0,T)\times N_1\to X$
is $C^2$ and there exists a constant $L>0$ such that for any given $x\in N_1$,
\begin{equation}\label{eq:E-estimates}
\|\E(x)\|\le L\quad\text{\rm and}\quad\|\dd_x\E\|_{\LL(X,X)}\le L
\end{equation}
where $\E(x)=\frac{\dd}{\dd t}\big|_{t=0}G_t(x)$.
\end{itemize}
Then, $S$ and $G$ are locally equivalent to a quasi-contraction in $N_1$ with the same distance function. 
More specifically, for any $x_0\in N_1$ there exist an open neighborhood $U(x_0)\subseteq N_1$ of $x_0$ in $X$,
a distance function $d : U(x_0)\times U(x_0)\to\R_{\ge 0}$ equivalent to the standard distance on $U(x_0)$, 
and a constant $\beta_1>0$, such that if the open set $V\subseteq U(x_0)$ in $X$ and $0<\tau\le T$ are chosen so
that $S_t(V), G_t(V)\subseteq U(x_0)$ for any $t\in[0,\tau)$ then 
\begin{equation}\label{eq:quasi-contraction1}
d\big(S_t(x),S_t(y)\big)=e^{\beta t}d(x,y)\quad\text{\rm and}\quad
d\big(G_t(x),G_t(y)\big)=e^{\beta_1 t}d(x,y)
\end{equation}
for any $x,y\in V$ and $t\in[0,\tau)$.
\end{Prop}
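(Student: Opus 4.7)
The plan is to take $d$ to be the standard equivalent semigroup distance associated with $S$, and to verify the $G$-quasi-contraction by differentiating $s\mapsto\|S_tG_s(x)-S_tG_s(y)\|^\lambda$ using the $C^1$-smoothness of $\|\cdot\|^\lambda$ provided by \eqref{eq:differentiable_norm}. Fix $x_0\in N_1$, choose a convex open neighborhood $U(x_0)\subseteq N_1$ of $x_0$, and set
\[
d(x,y):=\sup_{t\ge 0}e^{-\beta t}\|S_t(x)-S_t(y)\|,\qquad x,y\in U(x_0),
\]
where $M,\beta$ come from \eqref{eq:dS_t-exponential_growth}. The first bound in \eqref{eq:dS_t-exponential_growth} combined with the mean value inequality along a segment in $N$ gives $\|S_t(x)-S_t(y)\|\le Me^{\beta t}\|x-y\|$, hence $d(x,y)\le M\|x-y\|$; the $t=0$ term of the supremum yields $d(x,y)\ge\|x-y\|$, so $d$ is equivalent to the standard distance on $U(x_0)$. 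The $S$-estimate $d(S_s x,S_s y)\le e^{\beta s}d(x,y)$ is then immediate from the local semigroup property of $S$ by reindexing the supremum.

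The core step is the $G$-estimate. For $x,y\in V$ and $s\in[0,\tau)$ the assumption $G_t(V)\subseteq U(x_0)\subseteq N$ from the Proposition, together with $S_t(N)\subseteq N$ from $(i)$, makes $S_tG_s(x)$ and $S_tG_s(y)$ well-defined for every $t\ge 0$. Hypothesis $(ii)$ gives $s\mapsto G_s(x)-G_s(y)$ of class $C^1$ with derivative $\E(G_s x)-\E(G_s y)$, and the $C^2$ regularity of $S_t$ from $(i)$ lifts this to $C^1$ regularity of $s\mapsto S_tG_s(x)-S_tG_s(y)$. Applying the $C^1$-smoothness of $\|\cdot\|^\lambda$ (with $\lambda>1$) and the chain rule, and splitting
\[
\dd_{G_sx}S_t\,\E(G_sx)-\dd_{G_sy}S_t\,\E(G_sy)=\dd_{G_sx}S_t\,[\E(G_sx)-\E(G_sy)]+[\dd_{G_sx}S_t-\dd_{G_sy}S_t]\,\E(G_sy),
\]
I would bound the first piece by $MLe^{\beta t}\|G_sx-G_sy\|$ using $\|\dd_xS_t\|\le Me^{\beta t}$ and $\|\dd\E\|\le L$, and the second by $MLe^{\beta t}\|G_sx-G_sy\|$ using the Lipschitz estimate on $\dd_xS_t$ coming from $\|\dd^2_xS_t\|\le Me^{\beta t}$ and $\|\E\|\le L$ in \eqref{eq:E-estimates}. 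This yields
\[
\partial_s\|S_tG_s(x)-S_tG_s(y)\|^\lambda\le 2\lambda MLe^{\beta t}\|S_tG_s(x)-S_tG_s(y)\|^{\lambda-1}\|G_s(x)-G_s(y)\|.
\]
Multiplying by $e^{-\lambda\beta t}$ and using $e^{-\beta t}\|S_tG_s(x)-S_tG_s(y)\|\le d(G_s(x),G_s(y))$ (by definition) and $\|G_s(x)-G_s(y)\|\le d(G_s(x),G_s(y))$ (by the equivalence) gives
\[
\partial_s\bigl[e^{-\lambda\beta t}\|S_tG_s(x)-S_tG_s(y)\|^\lambda\bigr]\le 2\lambda ML\,d(G_s(x),G_s(y))^\lambda.
\]
Integrating in $s$ pointwise in $t$ and then taking $\sup_{t\ge 0}$ (in that order, so no interchange of $\partial_s$ and $\sup_t$ is needed) produces
\[
d(G_s(x),G_s(y))^\lambda\le d(x,y)^\lambda+2\lambda ML\int_0^s d(G_r(x),G_r(y))^\lambda\,dr,
\]
and Gronwall's lemma gives $d(G_s(x),G_s(y))\le e^{\beta_1 s}d(x,y)$ with $\beta_1:=2ML$, establishing \eqref{eq:quasi-contraction1}.

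The main obstacle I anticipate is the book-keeping needed to keep every intermediate trajectory inside a region where the $C^2$ bounds of $(i)$ apply uniformly — handled by the $S_t$-invariance of $N$ together with the ambient assumption $G_t(V)\subseteq U(x_0)$ built into the statement. A secondary subtle point is the regularity needed to differentiate $\|\cdot\|^\lambda$ along $s\mapsto S_tG_s(x)-S_tG_s(y)$; this is precisely what the smoothness condition \eqref{eq:differentiable_norm} on the norm and the $C^2$ regularity of $S_t$ and $G$ in $(i)$--$(ii)$ were designed to provide, and the reason the second-derivative bound in \eqref{eq:dS_t-exponential_growth} (rather than only a first-derivative bound) enters the argument.
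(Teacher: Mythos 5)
Your proposal is correct, and it is a genuinely different proof from the one in the paper. The paper defines a Finsler metric on the tangent bundle, $\3\xi\3_x:=\sup_{t\ge 0}\|e^{-\beta t}(\dd_x S_t)\xi\|$, takes the distance $d$ to be the associated length-infimum over $C^1$ curves, and then multiplies $\E$ by a smooth cut-off $a(x)=\chi(\|x-x_0\|^\lambda)$ to obtain a globally Lipschitz vector field $\tE$ whose flow $\tG$ is globally defined; the $G$-quasi-contraction is then pushed through for $\tG$ via a telescoping partition of $[0,t]$ and Gronwall, and finally one restricts $\tG$ to $V$ where it coincides with $G$. You instead take the \emph{semigroup} (Komura-type) distance $d(x,y)=\sup_{t\ge 0}e^{-\beta t}\|S_t(x)-S_t(y)\|$ directly, avoid the cut-off entirely by exploiting the hypothesis $G_s(V)\subseteq U(x_0)$ to keep all trajectories in the good region, and prove the $G$-estimate by differentiating $s\mapsto\|S_tG_s(x)-S_tG_s(y)\|^\lambda$ pointwise in $t$, integrating, taking $\sup_t$, and applying Gronwall — a more elementary route that replaces Finsler geometry and the commutator/partition argument with a single differential inequality. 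Interestingly, both proofs use the same two technical inputs but for different purposes: the second-derivative bound on $S_t$ serves, in the paper, to make $x\mapsto\3\cdot\3_x$ Lipschitz, and, in yours, to estimate the commutator $[\dd_{G_sx}S_t-\dd_{G_sy}S_t]\E(G_sy)$; and the smoothness of $\|\cdot\|^\lambda$ serves, in the paper, to build the cut-off $a(x)$, and, in yours, to differentiate the comparison functional. One small point worth making explicit is that your bound $\|\dd_v(\|\cdot\|^\lambda)\|_{X^*}\le\lambda\|v\|^{\lambda-1}$ is quantitatively sharper than the literal reading of \eqref{eq:differentiable_norm} ("bounded on bounded sets"), and is needed since $\|S_tG_s(x)-S_tG_s(y)\|$ is not bounded uniformly in $t$; it does, however, follow from convexity of $\|\cdot\|^\lambda$ for any Banach space, and is stated directly in Lemma \ref{lem:L^p-norm} for the paper's intended application, so the argument is sound.
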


\begin{Rem}
The condition that $G : [0,T)\times N_1\to X$ is $C^2$ in item $(ii)$ can be replaced by
the conditions that $G : [0,T)\times N_1\to X$ and $\E : N_1\to X$ are $C^1$.
\end{Rem}


\begin{proof}[Proof of Proposition \ref{prop:equivalent_distance}]
Take $x_0\in N_1$ and let $U(x_0):=B_\epsilon^X(x_0)$ be an open ball in $X$ of radius $\epsilon>0$ centered at $x_0$
such that $B_{2\epsilon}^X(x_0)\subseteq N_1$. Let us choose a cut-off function $\chi\in C_c^\infty(\R)$
such that $\chi(\rho)=1$ for $|\rho|\le\epsilon^\lambda$, $\chi(\rho)=0$ for $|\rho|\ge(2\epsilon)^\lambda$, 
$0\le\chi(\rho)\le 1$ for $\rho\in\R$, and set 
\[
a(x):=\chi\big(\|x-x_0\|^\lambda\big),\quad x\in X
\] 
where $\lambda>1$ is chosen as in \eqref{eq:differentiable_norm}.
By construction, $a : X\to\R$ is $C^1$, $a= 1$ on $U(x_0)$,
$a= 0$ on $X\setminus B_{2\epsilon}^X(x_0)$, and $0\le a\le 1$.
Let us now consider the vector field 
\begin{equation}\label{eq:E-modified}
\tE : X\to X,\quad \widetilde{\E}(x):=a(x)\E(x).
\end{equation}
Then \eqref{eq:E-modified} is $C^1$ and for any $x\in X$,
\begin{equation}\label{eq:G-tilde_property1}
\|\tE(x)\|\le L'\quad\text{\rm and}\quad\|\dd_x\tE\|_{\LL(X,X)}\le L'
\end{equation}
where $L':=\big(1+\max\limits_{x\in X}\|\dd_xa\|_{\LL(X,\R)}\big)\,L$.
This implies that \eqref{eq:E-modified} is globally Lipschitz on $X$, and hence,
its flow $\tG : [0,\infty)\times X\to X$ is a globally defined $C^1$ map.
By construction, 
\begin{equation}\label{eq:G-tilde_property2}
\tG : [0,\infty)\times N\to N\quad\text{\rm and}\quad
\tG_t\big|_V= G_t\big|_V\quad\text{\rm for any}\quad t\in[0,\tau)
\end{equation}
if the open set $V\subseteq U(x_0)$ and $0<\tau\le T$ are chosen so that
$G_t(V)\subseteq U(x_0)$ for $t\in[0,\tau)$.
Following the proof of \cite[Theorem 4.1]{Marsden}, define for any given $x\in N$ and $\xi\in T_xX= X$
the norm on the tangent space $T_xX$,
\begin{equation}\label{eq:Finsler_norm}
\3\xi\3_x:=\sup_{t\ge 0}\big\|e^{-\beta t}(\dd_xS_t)(\xi)\big\|\,.
\end{equation}
The proof that \eqref{eq:Finsler_norm} defines a norm on $T_xX$ is straightforward.
It follows from \eqref{eq:dS_t-exponential_growth} and \eqref{eq:Finsler_norm} that
\begin{equation}\label{eq:equivalent_norms}
\|\xi\|\le\3\xi\3_x\le M\|\xi\|
\end{equation}
for any $x\in N$ and $\xi\in T_xX= X$.
Moreover, for given $x,y\in N$ and $\xi\in X$ we obtain
\begin{align}
\big|\3\xi\3_x-\3\xi\3_y\big|&=\Big|
\sup_{t\ge 0}\big\|e^{-\beta t}(\dd_xS_t)(\xi)\big\|-\sup_{t\ge 0}\big\|e^{-\beta t}(\dd_yS_t)(\xi)\big\|
\Big|\nonumber\\
&\le\sup_{t\ge 0}\big\|e^{-\beta t}(\dd_xS_t)(\xi)-e^{-\beta t}(\dd_yS_t)(\xi)\big\|\nonumber\\
&\le M\|x-y\|\|\xi\|\label{eq:Finsler_norm-continuous}
\end{align}
where we used the triangle inequality, the fact that $\sup_{t\ge 0}\|f(t)\|$ is a norm on 
$C_b\big([0,\infty),X\big)$, and the second estimate in \eqref{eq:dS_t-exponential_growth}.

\begin{Rem}
The estimates \eqref{eq:equivalent_norms} and \eqref{eq:Finsler_norm-continuous} imply
that $\3\xi\3_x$ is continuous when considered as a function of $(x,\xi)\in TN= N\times X$.
\end{Rem}

\noindent In this way, by setting $F(x,\xi):=\3\xi\3_x^2$ for $(x,\xi)\in N\times X$
we obtain a (continuous) Finsler metric on $N$. With the help of this metric we then define
the corresponding distance function on $N$ in a standard way: for any $x,y\in N$,
\begin{equation}\label{eq:Finsler_distance}
d(x,y)= d_N(x,y):=\inf\limits_\gamma\int_0^1\3\dt\gamma(s)\3_{\gamma(s)}\,ds
\end{equation}
where the infimum is taken over all $C^1$-curves $\gamma : [0,1]\to N$ such that $\gamma(0)=x$
and $\gamma(1)=y$ and where $\dt\gamma$ denotes the derivative of $\gamma$.
In view of \eqref{eq:equivalent_norms}, we obtain that
\begin{equation}\label{eq:equivalent_distances1}
\|x-y\|\le d(x,y)\le M\|x-y\|
\end{equation}
where the left side inequality holds for any $x,y\in N$, 
the right side inequality holds for any $x,y\in N$ such that the straight segment $[x,y]$ lies in $N$, and where we used
the standard fact that $\|x-y\|=\inf\limits_\gamma\int_0^1\|\dt\gamma(s)\|\,ds$ 
provided that $[x,y]\subseteq N$.
Hence, the distance function on $N$ defined in \eqref{eq:Finsler_distance} is equivalent to the standard distance
when restricted to convex subsets of $N$. 
Let us now prove the first estimate in \eqref{eq:quasi-contraction1}.
For any $x,y\in N$, a $C^1$ curve $\gamma : [0,1]\to N$, $\gamma(0)=x$, $\gamma(1)=y$,
and for any given $t\in[0,\infty)$ we have
\begin{align}
&d\big(S_t(x),S_t(y)\big)\le\int_0^1\big\3\big(S_t(\gamma(s))\big)^\bfdot\big\3_{S_t(\gamma(s))}\,ds
\le\int_0^1\sup_{\nu\ge 0}\big\|e^{-\beta\nu}(\dd_{S_t(\gamma(s))}S_\nu)\big(S_t(\gamma(s))\big)^\bfdot\big\|\,ds
\nonumber\\
&\le\int_0^1\sup_{\nu\ge 0}\big\|e^{-\beta\nu}(\dd_{\gamma(s)}S_{\nu+t})\big(\dt{\gamma}(s)\big)\big\|\,ds
\le e^{\beta t}\int_0^1
\sup_{\nu+t\ge 0}\big\|e^{-\beta(\nu+t)}(\dd_{\gamma(s)}S_{\nu+t})\big(\dt{\gamma}(s)\big)\big\|\,ds
\nonumber\\
&\le e^{\beta t}\int_0^1\3\dt{\gamma}(s)\3_{\gamma(s)}\,ds\label{eq:eqlt5}
\end{align}
where $(\cdot)^\bfdot$ denotes the derivative with respect to the variable $s$ and where we used the semigroup property
and the fact that, by $(i)$, $S_t(N)\subseteq N$, and hence the curve $s\mapsto S_t(\gamma(s))$ takes values in $N$.
By taking the infimum in the inequality \eqref{eq:eqlt5} over all $\gamma$'s described above we obtain that
\[
d\big(S_t(x),S_t(y)\big)\le e^{\beta t}d(x,y)
\]
for any $x,y\in N$ and $t\in[0,\infty)$. This proves the first estimate in \eqref{eq:quasi-contraction1}.
Let us now prove the second inequality in \eqref{eq:quasi-contraction1}. We follow the arguments on p. 69 in \cite{Marsden}.
For any $t\ge 0$, $s\in[0,t]$, $x\in N$, and $\xi\in X$ we have
\begin{equation}\label{eq:eqlt6}
(\dd_x\tG_t)(\xi)-(\dd_x\tG_s)(\xi)=\int_s^t\frac{\dd}{\dd\mu}(\dd_x\tG_\mu)(\xi)\,d\mu=
\int_s^t(\dd_{\tG_\mu(x)}\tE)(\dd_x\tG_\mu)(\xi)\,d\mu
\end{equation}
where we used that \eqref{eq:E-modified} is $C^1$ and hence, 
by the ODE theorem in Banach spaces (cf., e.g., \cite[Theorem 1.11, Ch. IV]{Lang}),
the map $\mu\mapsto \dd_x\tG_\mu$, $[0,\infty)\to\LL(X,X)$, is $C^1$ and satisfies the equation
\[
\frac{\dd}{\dd\mu}(\dd_x\tG_\mu)(\xi)=(\dd_{\tG_\mu(x)}\tE)(\dd_x\tG_\mu)(\xi).
\] 
Since the integrand in \eqref{eq:eqlt6} is a continuous function of $\mu\in[s,t]$, we obtain from
\eqref{eq:G-tilde_property1} and \eqref{eq:equivalent_norms} that
\begin{align}\label{eq:eqlt7}
\Big|\big\3(\dd_x\tG_t)(\xi)\big\3_{\tG_s(x)}-\big\3(\dd_x\tG_s)(\xi)\big\3_{\tG_s(x)}\Big|
&\le\int_s^t\big\3(\dd_{\tG_\mu(x)}\tE)(\dd_x\tG_\mu)(\xi)\big\3_{\tG_s(x)}\,d\mu\nonumber\\
&\le ML'\int_s^t\big\|(\dd_x\tG_\mu)(\xi)\big\|\,d\mu
\end{align}
for any $x\in N$, $\xi\in X$, and $0\le s\le t<\infty$.
On the other side, we obtain from \eqref{eq:Finsler_norm-continuous} that
\begin{align}\label{eq:eqlt8}
\Big|\big\3(\dd_x\tG_t)(\xi)\big\3_{\tG_s(x)}-\big\3(\dd_x\tG_t)(\xi)\big\3_{\tG_t(x)}\Big|&\le
M\big\|\tG_s(x)-\tG_t(x)\big\|\,\big\|(\dd_x\tG_t)(\xi)\big\|\nonumber\\
&\le ML'|t-s|\,\big\|(\dd_x\tG_t)(\xi)\big\|
\end{align} 
where we used that $\big\|\tG_s(x)-\tG_t(x)\big\|=\Big\|\int_s^t\tE\big(\tG_\mu(x)\big)\,d\mu\Big\|\le L'|t-s|$.
It now follows from \eqref{eq:eqlt7} and \eqref{eq:eqlt8} that
\begin{equation}\label{eq:eqlt9}
\Big|\big\3(\dd_x\tG_t)(\xi)\big\3_{\tG_t(x)}-\big\3(\dd_x\tG_s)(\xi)\big\3_{\tG_s(x)}\Big|\le
ML'\Big(\int_s^t\big\|(\dd_x\tG_\mu)(\xi)\big\|\,d\mu+|t-s|\,\big\|(\dd_x\tG_t)(\xi)\big\|\Big)
\end{equation}
for any $x\in N$, $\xi\in X$, and $0\le s\le t<\infty$. 
Take $t\in(0,\infty)$ and let $0=t_1\le t_2\le...\le t_\ell=t$ be a partition of the interval $[0,t]$.
We then obtain from \eqref{eq:eqlt9} that
\begin{align*}
\Big|\big\3(\dd_x\tG_t)(\xi)\big\3_{\tG_t(x)}(\xi)-\3\xi\3_x\Big|&\le
\sum_{j=1}^{\ell-1}\Big|\big\3(\dd_x\tG_{t_j})(\xi)\big\3_{\tG_{t_j}(x)}-
\big\3(\dd_x\tG_{t_{j+1}})(\xi)\big\3_{\tG_{t_{j+1}}(x)}\Big|\\
&\le ML'\Big(\int_0^t\big\|(\dd_x\tG_\mu)(\xi)\big\|\,d\mu+
\sum_{j=1}^{\ell-1}\big\|(\dd_x\tG_{t_{j+1}})(\xi)\big\|\,|t_{j+1}-t_j|
\Big)\,.
\end{align*}
By passing to the limit as the diameter of the partition $\max\limits_{0\le j\le\ell-1}|t_{j-1}-t_j|\to 0$
we then obtain that
\[
\Big|\big\3(\dd_x\tG_t)(\xi)\big\3_{\tG_t(x)}(\xi)-\3\xi\3_x\Big|\le 2ML'\int_0^t\big\|(\dd_x\tG_\mu)(\xi)\big\|\,d\mu
\le 2ML'\int_0^t\big\3(\dd_x\tG_\mu)(\xi)\big\3_{\tG_\mu(x)}\,d\mu
\]
where we used \eqref{eq:equivalent_norms} and the fact that the Riemann sums converge since
the integrand is a continuous function $[0,t]\to X$. This implies that
\[
\big\3(\dd_x\tG_t)(\xi)\big\3_{\tG_t(x)}\le
\3\xi\3_x+2ML'\int_0^t\big\3(\dd_x\tG_\mu)(\xi)\big\3_{\tG_\mu(x)}\,d\mu
\]
for any $x\in N$, $\xi\in X$, and $t\in[0,\infty)$.
By Gronwall's inequality,
\begin{equation}\label{eq:eqlt10}
\big\3(\dd_x\tG_t)(\xi)\big\3_{\tG_t(x)}\le\3\xi\3_x e^{2ML' t}
\end{equation}
for any $x\in N$, $\xi\in X$, and $t\in[0,\infty)$. 
We then obtain from \eqref{eq:Finsler_distance} and \eqref{eq:eqlt10} that
for any $t\in[0,\infty)$, $x,y\in N$, and for any $C^1$-curve $\gamma : [0,1]\to N$, 
$\gamma(0)=x$, $\gamma(1)=y$, we have
\[
d\big(\tG_t(x),\tG_t(y)\big)\le
\int_0^1\big\3\big(\tG_t\big(\gamma(s)\big)\big)^\bfdot\big\3_{\tG_t(\gamma(s))}\,ds
\le e^{2ML' t}\int_0^1\3\dt{\gamma}(s)\3_{\gamma(s)}\,dt
\]
where we also used that $\tG_t(N)\subseteq N$ to conclude the inequality appearing on the left side.
By taking the infimum over all $\gamma$'s as above we conclude that
\[
d\big(\tG_t(x),\tG_t(y)\big)\le e^{2ML' t}d(x,y)
\]
for any $x,y\in N$ and $t\in[0,\infty)$. 
This, together with the second formula in \eqref{eq:G-tilde_property2}, then proves the second inequality 
in \eqref{eq:quasi-contraction1}.
It follows from \eqref{eq:equivalent_distances1} and the convexity of $U(x_0)$ that
the restriction $d|_{U(x_0)}$ of the distance function \eqref{eq:Finsler_distance} to the subset $U(x_0)$ 
is equivalent to the standard distance on $U(x_0)$. This completes the proof of the Proposition.
\end{proof}

We are now ready to prove Theorem \ref{th:product_formula}.

\begin{proof}[Proof of Theorem \ref{th:product_formula}]
Assume that the condition $1)$ and $2)$ in Proposition \ref{prop:product_formula} $(c)$ are replaced by 
the conditions $1^*)$ and $2^*)$ and that all other conditions in Proposition \ref{prop:product_formula} are satisfied.
Take $x_0\in U$. By $1^*)$, there exists an open neighborhood $N(x_0)\subseteq X$ of $x_0$ in $X$ such that
for any given $t\in[0,\infty)$ the map $S_t : N(x_0)\to X$ is well-defined and $C^2$, $S_t\big(N(x_0)\big)\subseteq N(x_0)$,
and the inequalities \eqref{eq:dS_t-exponential_growth1} hold as stated. Since $G : [0,T)\times U\to X$ is assumed $C^2$
(condition $(a)$) there exists an open neighborhood $N_1(x_0)\subseteq N(x_0)\cap U$ of $x_0$ in $X$ such that 
the inequalities \eqref{eq:E-estimates} hold for any $x\in N_1(x_0)$ and $t\in[0,T)$ (with $T>0$ taken smaller if necessary). 
We then apply Proposition \ref{prop:equivalent_distance} (with $N:= N(x_0)$ and $N_1:= N_1(x_0)$) to obtain
that $G : [0,\infty)\times N_1(x_0)\to X$ and $S : [0,\infty)\times N_1(x_0)\to X$ are locally equivalent to a quasi-contraction
with the same distance function. Hence the condition $1)$ in $(c)$ is satisfied with $U$ replaced by $N_1(x_0)$.
The condition $2)$ in $(c)$ follows from $2^*)$ by the same reasoning.
We can now apply  Proposition \ref{prop:product_formula} [Proposition \ref{prop:product_formula_nu}, respectively]
with $U:=N_1(x_0)$ to obtain for any $z_0\in U_Z$ a positive constant $0<\tau\le T$, an open neighborhood 
$O(z_0)\subseteq U_Z$ of $z_0$ in $Z$, and a flow map $H : [0,\tau)\times O(Z_0)\to Z$ as stated in
Proposition \ref{prop:product_formula} [Proposition \ref{prop:product_formula_nu}, respectively]. 
This completes the proof of Theorem \ref{th:product_formula}.
\end{proof}

\appendix

\section{The space $W^{m,p}_\delta$}\label{sec:W-spaces}
In this Appendix we collect several basic properties of the weighted Sobolev space $W^{m,p}_\delta$
as well as other technical results used in the main body of the paper. 

\begin{Lem}\label{lem:W-decay}
Assume that $1<p<\infty$ and $\delta\in\R$.
Then, for $m>\frac{d}{p}$ and for any $0\le k<m-\frac{d}{p}$ we have that 
$W^{m,p}_\delta\subseteq C^k(\R^d)$. Moreover, there exists a constant $C>0$ such that
for any $|\alpha|\le k$ and for any $f\in W^{m,p}_\delta$ we have that
\[
\x^{\delta+\frac{d}{p}+|\alpha|}\big|(\partial^\alpha f)(x)\big|\le C\,\|f\|_{W^{m,p}_\delta}
\]
for any $x\in\R^d$. In addition, $\x^{\delta+\frac{d}{p}+|\alpha|}|(\partial^\alpha f)(x)|=o(1)$
as $|x|\to\infty$.
\end{Lem}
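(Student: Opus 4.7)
The plan is to reduce to the classical (unweighted) Sobolev embedding $W^{m,p}(B) \hookrightarrow C^k(\overline{B})$ on a fixed ball $B \subset \R^d$ via a rescaling argument that exploits the fact that on an annulus $\{|x| \sim R\}$ the weight $\langle x\rangle^{\delta+|\alpha|}$ is essentially constant and equal to $R^{\delta+|\alpha|}$. The $C^k$ inclusion then follows from the pointwise bound, and the $o(1)$ decay follows from density of $C^\infty_c$ in $W^{m,p}_\delta$.

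First I would treat the interior estimate $|x_0| \le 2$ by direct Sobolev embedding: on $B(0,3)$, the norm $\|f\|_{W^{m,p}(B(0,3))}$ is bounded by $\|f\|_{W^{m,p}_\delta}$ (the weight $\langle x\rangle^{\delta+|\alpha|}$ is bounded above and below on this set), and classical Sobolev embedding gives $\|\partial^\alpha f\|_{L^\infty(B(0,2))} \le C\|f\|_{W^{m,p}_\delta}$ for $|\alpha| \le k$. For $|x_0|\ge 2$, I would fix $x_0$ and define the rescaled function $g(y):=f(x_0+|x_0|y)$ on the unit ball $B(0,1/2)$. The chain rule gives $\partial^\beta g(y) = |x_0|^{|\beta|}(\partial^\beta f)(x_0+|x_0|y)$, and changing variables yields
\[
\|\partial^\beta g\|_{L^p(B(0,1/2))}^p
= |x_0|^{|\beta|p - d}\int_{B(x_0,|x_0|/2)}|\partial^\beta f(x)|^p\,dx.
\]
Since $\langle x\rangle\sim|x_0|$ on $B(x_0,|x_0|/2)$, one has $\int_{B(x_0,|x_0|/2)}|\partial^\beta f|^p\,dx \le C|x_0|^{-(\delta+|\beta|)p}\|f\|_{W^{m,p}_\delta}^p$, and summing over $|\beta|\le m$ gives
\[
\|g\|_{W^{m,p}(B(0,1/2))} \le C|x_0|^{-(\delta+d/p)}\|f\|_{W^{m,p}_\delta}.
\]
Applying the classical Sobolev embedding $W^{m,p}(B(0,1/2)) \hookrightarrow C^k(\overline{B(0,1/4)})$ (valid since $m - k > d/p$) to $g$ at $y=0$ and using $|\partial^\alpha f(x_0)| = |x_0|^{-|\alpha|}|\partial^\alpha g(0)|$ yields the pointwise bound
\[
|x_0|^{\delta+\frac{d}{p}+|\alpha|}|\partial^\alpha f(x_0)| \le C\|f\|_{W^{m,p}_\delta},
\]
and, combined with the $|x_0|\le 2$ case, gives the claimed inequality with $\langle x\rangle$ in place of $|x|$.

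Continuity of $\partial^\alpha f$ for $|\alpha|\le k$ follows from the same rescaling together with the fact that classical Sobolev embedding actually gives $W^{m,p}\hookrightarrow C^k$ locally, so $f\in C^k(\R^d)$. For the $o(1)$ statement, I would use that $C^\infty_c(\R^d)$ is dense in $W^{m,p}_\delta$ (part of the definition of the space): given $\varepsilon>0$, choose $\varphi\in C^\infty_c$ with $\|f-\varphi\|_{W^{m,p}_\delta}<\varepsilon$; then for $|x|$ large enough that $\varphi(x)=0$, applying the pointwise bound to $f-\varphi$ gives $\langle x\rangle^{\delta+d/p+|\alpha|}|\partial^\alpha f(x)| \le C\varepsilon$, so $\limsup_{|x|\to\infty}\langle x\rangle^{\delta+d/p+|\alpha|}|\partial^\alpha f(x)|\le C\varepsilon$ for every $\varepsilon>0$.

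The main (only) technical obstacle is the bookkeeping in the rescaling step: one must check that the powers of $|x_0|$ introduced by the change of variables combine with the powers coming from the weight to produce exactly the exponent $\delta+d/p+|\alpha|$ on the left side, with the derivative losses $|x_0|^{-|\alpha|}$ from undoing the chain rule absorbed cleanly. Everything else is a routine application of density and of the classical Sobolev embedding on a fixed domain.
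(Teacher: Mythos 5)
Your proof is correct. Note that the paper does not actually include a proof of Lemma~\ref{lem:W-decay}: it simply refers to \cite[Appendix A]{SultanTopalov} (see also \cite{McOwen,McOwenTopalov2}), so there is no internal argument to compare against. The rescaling/dyadic-annulus reduction you give — fixing $x_0$, setting $g(y)=f(x_0+|x_0|y)$, tracking the factor $|x_0|^{|\beta|p-d}$ from the chain rule and change of variables against the weight $|x_0|^{-(\delta+|\beta|)p}$, and then invoking the classical embedding $W^{m,p}(B(0,1/2))\hookrightarrow C^k(\overline{B(0,1/4)})$ for $m>k+d/p$ — is the standard and correct route to this estimate, and the exponent bookkeeping ($|\beta|p-d-(\delta+|\beta|)p=-d-\delta p$, giving $|x_0|^{-(\delta+d/p)}$ after taking $p$-th roots, then paying $|x_0|^{-|\alpha|}$ to convert $\partial^\alpha g(0)$ back to $\partial^\alpha f(x_0)$) works out exactly as claimed. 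The density argument for the $o(1)$ tail is also correct: once $x_0$ lies outside $\supp\varphi$, $\partial^\alpha(f-\varphi)(x_0)=\partial^\alpha f(x_0)$, so the already-established pointwise bound applied to $f-\varphi$ gives $\limsup_{|x|\to\infty}\langle x\rangle^{\delta+d/p+|\alpha|}|\partial^\alpha f(x)|\le C\varepsilon$ for every $\varepsilon>0$. In short, your argument is a clean, self-contained proof of the lemma; the only difference from the paper is that the paper delegates the proof to cited references rather than supplying one.
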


The elements of $W^{m,p}_\delta$ enjoy the following composition property.

\begin{Lem}\label{lem:W-multiplication}
Assume that $1<p<\infty$, $\delta_1,\delta_2\in\R$, $0\le k\le l\le m$ and $m+l-k>\frac{d}{p}$. 
Then, there exists a constant $C>0$ such that for any $f\in W^{m,p}_{\delta_1}$ and $g\in W^{l,p}_{\delta_1}$,
we have that $fg\in W^{k,p}_{\delta_1+\delta_2+\frac{d}{p}}$ and
\[
\|fg\|_{W^{k,p}_{\delta_1+\delta_2+\frac{d}{p}}}\le C\,
\|f\|_{ W^{m,p}_{\delta_1}}\|g\|_{ W^{l,p}_{\delta_2}}\,.
\]
\end{Lem}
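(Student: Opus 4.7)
The plan is to reduce the weighted estimate to the classical (unweighted) Sobolev multiplication inequality on a fixed bounded domain by means of a dyadic decomposition and rescaling. Decompose $\R^d$ into the dyadic annuli $A_j:=\{2^{j-1}\le|x|<2^{j+1}\}$ for $j\ge 1$, together with a central ball $A_0:=\{|x|<2\}$; the collection has bounded overlap and covers $\R^d$. For $j\ge 1$ introduce the rescaled functions $\varphi_j(y):=f(2^j y)$ and $\psi_j(y):=g(2^j y)$ on the fixed annulus $A:=\{1/2\le|y|<2\}$. Since $\x\asymp 2^j$ on $A_j$, the change of variables $x=2^j y$ gives, uniformly in $j\ge 1$ and $|\alpha|\le m$,
\[
\big\|\x^{\delta_1+|\alpha|}\partial^\alpha f\big\|_{L^p(A_j)}^p\asymp 2^{j(\delta_1 p+d)}\big\|\partial^\alpha\varphi_j\big\|_{L^p(A)}^p,
\]
because the factors $2^{j(\delta_1+|\alpha|)p}$ from the weight, $2^{-j|\alpha|p}$ from the identity $\partial^\alpha f(2^j y)=2^{-j|\alpha|}\partial^\alpha\varphi_j(y)$, and $2^{jd}$ from the Jacobian combine into a prefactor independent of $|\alpha|$. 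Summing over $|\alpha|\le m$ yields $\|f\|^p_{W^{m,p}_{\delta_1}(A_j)}\asymp 2^{j(\delta_1 p+d)}\|\varphi_j\|^p_{W^{m,p}(A)}$, and analogous identities hold for $\psi_j$ with weight $\delta_2$ and for $fg$ with weight $\delta_1+\delta_2+\tfrac{d}{p}$.

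Next I would invoke the classical Sobolev multiplication inequality on the fixed bounded domain $A$,
\[
\|\varphi_j\psi_j\|_{W^{k,p}(A)}\le C_A\,\|\varphi_j\|_{W^{m,p}(A)}\,\|\psi_j\|_{W^{l,p}(A)},
\]
which holds under precisely the hypotheses $0\le k\le l\le m$ and $m+l-k>\tfrac{d}{p}$. Its proof uses Leibniz's rule and, for each pair $|\beta|+|\gamma|=|\alpha|\le k$, H\"older's inequality with exponents $1/p_\beta+1/p_\gamma=1/p$ chosen so that the Sobolev embeddings $W^{m-|\beta|,p}(A)\hookrightarrow L^{p_\beta}(A)$ and $W^{l-|\gamma|,p}(A)\hookrightarrow L^{p_\gamma}(A)$ both hold on the bounded Lipschitz domain $A$; such a choice is possible exactly when $m+l-|\alpha|>\tfrac{d}{p}$, which is guaranteed by the hypothesis. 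Combined with the rescaling identity above, this gives, uniformly in $j$,
\[
\|fg\|_{W^{k,p}_{\delta_1+\delta_2+\frac{d}{p}}(A_j)}\le C\,\|f\|_{W^{m,p}_{\delta_1}(A_j)}\,\|g\|_{W^{l,p}_{\delta_2}(A_j)},
\]
the crucial cancellation being $(\delta_1+\delta_2+\tfrac{d}{p})p+d=(\delta_1 p+d)+(\delta_2 p+d)$. The particular value $\delta_1+\delta_2+\tfrac{d}{p}$ of the weight on the target space is forced precisely by this scaling balance.

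Finally, raising to the $p$-th power and summing over $j\ge 0$ (the case $j=0$ is handled directly on $A_0$ where $\x\asymp 1$),
\[
\|fg\|^p_{W^{k,p}_{\delta_1+\delta_2+\frac{d}{p}}}\le\sum_{j\ge 0}\|fg\|^p_{W^{k,p}_{\delta_1+\delta_2+\frac{d}{p}}(A_j)}\le C\Big(\sup_{j\ge 0}\|f\|^p_{W^{m,p}_{\delta_1}(A_j)}\Big)\sum_{j\ge 0}\|g\|^p_{W^{l,p}_{\delta_2}(A_j)}\le C\|f\|^p_{W^{m,p}_{\delta_1}}\|g\|^p_{W^{l,p}_{\delta_2}},
\]
where the bounded overlap of $\{A_j\}$ controls the final sum. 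Taking $p$-th roots gives the claimed estimate. The principal technical obstacle is the classical multiplication inequality on $A$, but this is a standard Sobolev-algebra result; the delicate point in that proof is the borderline range in which both $m-|\beta|\le\tfrac{d}{p}$ and $l-|\gamma|\le\tfrac{d}{p}$, which forces the fractional H\"older splitting rather than a simple $L^\infty\!\cdot\!L^p$ bound, and ultimately explains why the hypothesis $m+l-k>\tfrac{d}{p}$ cannot be relaxed.
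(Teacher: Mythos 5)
Your argument is correct. The paper does not give its own proof of this lemma but cites \cite[Appendix A]{SultanTopalov}; however, the dyadic decomposition into annuli $A_j$, the rescaling $x=2^j y$ to a fixed reference annulus, the observation that the prefactor $2^{j(\delta p+d)}$ is independent of $|\alpha|$ (so that the weighted norm of the full $W^{m,p}$-jet on $A_j$ rescales to the unweighted $W^{m,p}(A)$-norm of $\varphi_j$), and the exponent balance $(\delta_1+\delta_2+\tfrac{d}{p})p+d=(\delta_1 p+d)+(\delta_2 p+d)$ are precisely the standard Bartnik--McOwen mechanics for weighted Sobolev spaces, so this is almost certainly the same route as the cited source. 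All the steps check out: the classical multiplication bound $\|\varphi\psi\|_{W^{k,p}(A)}\lesssim\|\varphi\|_{W^{m,p}(A)}\|\psi\|_{W^{l,p}(A)}$ on the bounded Lipschitz annulus $A$ under $0\le k\le l\le m$ and $m+l-k>\tfrac{d}{p}$ is a standard consequence of Leibniz's rule and H\"older/Sobolev splittings, the sup-times-sum estimate at the end uses only $\|f\|_{W^{m,p}_{\delta_1}(A_j)}\le\|f\|_{W^{m,p}_{\delta_1}}$ and the bounded overlap of the $\{A_j\}$, and you correctly read the evident typo in the Lemma's hypothesis ($g\in W^{l,p}_{\delta_2}$, not $W^{l,p}_{\delta_1}$). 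No gaps.
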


For the proof of Lemma \ref{lem:W-decay} and Lemma \ref{lem:W-multiplication} we refer to 
\cite[Appendix A]{SultanTopalov} (see also \cite{McOwen,McOwenTopalov2}).
Note that as a consequence from Lemma \ref{lem:W-multiplication} we obtain that the space $W^{m,p}_\delta$
is a Banach algebra for $m>\frac{d}{p}$ and $\delta+\frac{d}{p}\ge 0$.
Recall from \cite{McOwenTopalov4} that we have the following Theorem on
the compositions of maps.

\begin{Th}\label{th:composition}
Assume that $m>1+\frac{d}{p}$, $\delta+\frac{d}{p}>-1$, and $r\ge\Z_{\ge 0}$.
Then we have:
\begin{itemize}
\item[(a)] The composition $W^{m+r,p}_\delta\times\D^{m,p}_\delta$,
$(w,\varphi)\mapsto w\circ\varphi$, is a $C^r$ map.
\item[(b)] The map $\D^{m+r,p}_\delta\to\D^{m,p}_\delta$, $\varphi\mapsto\varphi^{-1}$,
is a $C^r$ map.
\end{itemize}
\end{Th}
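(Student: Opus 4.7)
The plan is to prove both parts simultaneously by induction on $r\in\Z_{\ge 0}$. For $r=0$, part (a) is the continuity of composition established in Lemma \ref{lem:continuity_composition_general}, and part (b) is the continuity of inversion, which follows from the fact (\cite[Theorem 2.1]{McOwenTopalov4}) that $\D^{m,p}_\delta$ is a topological group. Assuming the statement holds up to order $r-1$, I would promote it to order $r$ by first computing the Gateaux derivative in (a), then invoking the implicit function theorem in (b).

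For the inductive step of part (a), I would formally compute the differential of $(w,\varphi)\mapsto w\circ\varphi$ at $(w_0,\varphi_0)$ in the direction $(h,k)$ as
\[
h\circ\varphi_0+\big((\nabla w_0)\circ\varphi_0\big)\cdot k.
\]
The first term is linear in $h$ and is a composition of the same type treated by (a), so by the inductive hypothesis it is $C^r$ in $(h,\varphi_0)\in W^{m+r,p}_\delta\times\D^{m,p}_\delta$. The second term uses $\nabla w_0\in W^{m+r-1,p}_{\delta+1}$, and the inductive hypothesis applied with regularity $m+(r-1)$ and weight $\delta+1$ tells us that $(\nabla w_0)\circ\varphi_0\in W^{m,p}_{\delta+1}$ depends $C^{r-1}$-smoothly on $(w_0,\varphi_0)$; multiplying against $k\in W^{m,p}_\delta$ produces an element of $W^{m,p}_\delta$ by Lemma \ref{lem:W-multiplication}. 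To upgrade from Gateaux to Frechet, I would write the remainder via Taylor's theorem applied pointwise and bound its weighted $L^p$ norm using a second-order composition estimate of the same flavor, which requires $m>1+\frac{d}{p}$ so that $W^{m-1,p}_{\delta+1}\subseteq L^\infty$. Iterating this derivative computation and checking that each further differentiation either consumes one extra derivative from $w$ (replacing it by $\partial^\alpha w$) or multiplies by a factor in $W^{m,p}_\delta$ that Lemma \ref{lem:W-multiplication} reabsorbs, one gets $C^r$ regularity.

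For part (b), I would apply the implicit function theorem to
\[
F:\D^{m+r,p}_\delta\times\D^{m,p}_\delta\to W^{m,p}_\delta,\qquad F(\varphi,\psi):=\varphi\circ\psi-\id,
\]
which is $C^r$ by part (a). At a solution $(\varphi_0,\psi_0)=(\varphi_0,\varphi_0^{-1})$, the partial derivative $\partial_\psi F(\varphi_0,\psi_0)(k)=\big((\dd\varphi_0)\circ\psi_0\big)\cdot k$ is a linear isomorphism of $W^{m,p}_\delta$: its inverse is pointwise multiplication by the matrix $((\dd\varphi_0)\circ\psi_0)^{-1}=\Id-(\dd\varphi_0\circ\psi_0-\Id)+\cdots$, which converges in the Banach algebra $W^{m-1,p}_{\delta+1}$ (after subtracting $\Id$, by Lemma \ref{lem:W-multiplication}) since $\det(\dd\varphi_0)>0$ and Lemma \ref{lem:W-decay} controls the pointwise size. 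The implicit function theorem then yields $\varphi\mapsto\varphi^{-1}$ as a $C^r$ map.

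The main obstacle is the weight and regularity bookkeeping in (a): differentiating once in $\varphi$ trades one derivative of $w$ for a weight shift of $+1$, and closing the argument requires that the multiplication Lemma \ref{lem:W-multiplication} exactly cancels this shift against the tangent vector in $W^{m,p}_\delta$. After $r$ differentiations one is left with terms of the form $(\partial^\alpha w)\circ\varphi\cdot k_1\cdots k_r$ with $|\alpha|\le r$, which consume precisely the excess $r$ derivatives built into the hypothesis $w\in W^{m+r,p}_\delta$; this explains both why the loss is exactly $r$ and why the hypothesis $\delta+\frac{d}{p}>-1$ (ensuring $W^{m-1,p}_{\delta+1}\subseteq L^\infty$) is indispensable for controlling the composition error terms.
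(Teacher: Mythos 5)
Your proposal is essentially the standard inductive argument, which is what the paper relies on by citing \cite[Theorem 1.1]{IKT} for $r\ge 2$ and \cite{McOwenTopalov4} for $r=0,1$; the paper itself offers no further detail. Two technical points in your sketch should be tightened. First, in the inductive step the differential term $\big(\nabla w_0\circ\varphi_0\big)\cdot k$ involves composing $\nabla w_0\in W^{m+r-1,p}_{\delta+1}$ (weight $\delta+1$) with $\varphi_0\in\D^{m,p}_\delta$ (weight $\delta$); the inductive statement must therefore be formulated for a function-weight $\gamma$ decoupled from the diffeomorphism's weight $\delta$, as in Lemma \ref{lem:continuity_composition_general} where $\gamma\ne\delta$ is allowed. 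As the theorem is stated, both weights are tied to $\delta$, so the induction only closes once this strengthening is built in explicitly. Second, the Neumann series $\Id-(\dd\varphi_0\circ\psi_0-\Id)+\cdots$ need not converge, because $A:=(\dd\varphi_0)\circ\psi_0-\Id$ is generally not small in the Banach-algebra norm — $\det(\dd\varphi_0)>0$ and Lemma \ref{lem:W-decay} give pointwise lower bounds and decay at infinity, not smallness. The correct route is the adjugate formula $(\Id+A)^{-1}=\det(\Id+A)^{-1}\,\mathrm{adj}(\Id+A)$, together with division in the Banach algebra $W^{m,p}_{\delta+1}$ (note: for $r\ge 1$ one has $A\in W^{m,p}_{\delta+1}$, not merely $W^{m-1,p}_{\delta+1}$ — otherwise the inverse multiplication operator would lose a derivative and fail to map $W^{m,p}_\delta$ to itself), or simply the observation that at the solution $\psi_0=\varphi_0^{-1}$ the chain rule yields $((\dd\varphi_0)\circ\psi_0)^{-1}=\dd\psi_0$ directly. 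With these repairs your argument is correct and matches the approach the paper outsources to \cite{IKT}.
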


In \cite{McOwenTopalov4} the Theorem is stated for $r=0,1$. The case $r\ge 2$ follows easily
from the arguments in the proof of \cite[Theorem 1.1]{IKT}.
The following Lemma generalizes \cite[Corollary A.2]{McOwenTopalov4}.

\begin{Lem}\label{lem:continuity_composition_general}
Assume that $1<p<\infty$, $m>1+\frac{d}{p}$, $\delta+\frac{d}{p}>-1$, and $\gamma\in\R$.
Then, for any $0\le k\le m$ the map
\begin{equation}\label{em:continuity_composition_general}
W^{k,p}_\gamma\times \D^{m,p}_\delta\to W^{k,p}_\gamma,\quad
(f,\varphi)\mapsto f\circ\varphi,
\end{equation}
is well defined and continuous.
\end{Lem}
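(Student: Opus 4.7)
The plan is to prove the lemma in two steps: first, establish the locally uniform boundedness of the composition operator $R_\varphi : f \mapsto f\circ\varphi$ on $W^{k,p}_\gamma$; second, derive continuity by combining this bound with density of $C^\infty_c$ and the $r=0$ case of Theorem \ref{th:composition}. Fix $\varphi_0 = \id + g_0 \in \D^{m,p}_\delta$ and write $\varphi = \id + g$ with $g$ in a small $W^{m,p}_\delta$-neighborhood of $g_0$. Since $\delta+\frac{d}{p}>-1$, Lemma \ref{lem:W-decay} gives $|g(x)| \le C\langle x\rangle^{-\delta-d/p}$ with $-\delta-\frac{d}{p}<1$, so $|g(x)|/\langle x\rangle \to 0$ at infinity; together with the fact that $\varphi$ is proper, this yields the weight equivalence $\langle\varphi(x)\rangle \sim \langle x\rangle$ with constants uniform on the neighborhood. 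Applying Fa\`a di Bruno,
\[
\partial^\alpha(f\circ\varphi) \;=\; \sum_{1\le|\beta|\le|\alpha|}\bigl((\partial^\beta f)\circ\varphi\bigr)\,P_{\alpha,\beta}(\varphi),
\]
where $P_{\alpha,\beta}$ is a sum of products of $|\beta|$ derivatives of $\varphi$ whose orders sum to $|\alpha|$. A careful bookkeeping argument using Lemma \ref{lem:W-decay} for pointwise bounds on lower-order derivatives of $g$, Lemma \ref{lem:W-multiplication} (which in particular makes $W^{m-1,p}_{\delta+1}$ a Banach algebra, since $m-1>d/p$ and $\delta+1+d/p>0$) for products of higher-order derivative factors, and H\"older's inequality to balance $L^\infty$ against $L^p$, produces the $L^p_{\gamma+|\alpha|}$-bound of each term by $C\|f\|_{W^{k,p}_\gamma}$. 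A change of variables $y=\varphi(x)$, with $|\det d\varphi|^{\pm1}$ locally uniformly bounded, then gives $\|f\circ\varphi\|_{W^{k,p}_\gamma} \le C(\varphi)\|f\|_{W^{k,p}_\gamma}$ with $C(\varphi)$ locally bounded on $\D^{m,p}_\delta$.

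For continuity at $(f_0,\varphi_0)$, given $\epsilon>0$ pick $\psi \in C^\infty_c$ with $\|f_0-\psi\|_{W^{k,p}_\gamma}<\epsilon$ and decompose
\[
f\circ\varphi - f_0\circ\varphi_0 = (f-f_0)\circ\varphi + (f_0-\psi)\circ\varphi - (f_0-\psi)\circ\varphi_0 + \psi\circ\varphi - \psi\circ\varphi_0.
\]
The first three summands are bounded in $W^{k,p}_\gamma$-norm by constants times $\|f-f_0\|_{W^{k,p}_\gamma}$ or $\epsilon$ via the locally uniform bound just established. For the last summand, note that $\supp(\psi\circ\varphi) \subseteq \varphi^{-1}(\supp\psi)$ lies in a common compact set $K$ for all $\varphi$ in a small neighborhood of $\varphi_0$; on functions supported in $K$ both weights $\langle x\rangle^{\gamma+|\alpha|}$ and $\langle x\rangle^{\delta+|\alpha|}$ are bounded above and below, so $\|\cdot\|_{W^{k,p}_\gamma}$ is dominated by $\|\cdot\|_{W^{m,p}_\delta}$. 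Theorem \ref{th:composition} (with $r=0$) then gives $\psi\circ\varphi \to \psi\circ\varphi_0$ in $W^{m,p}_\delta$, hence also in $W^{k,p}_\gamma$. Sending $(f,\varphi)\to(f_0,\varphi_0)$ and then $\epsilon\to 0$ completes the proof.

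The main obstacle is the boundedness step for $|\alpha|$ close to $m$: when $P_{\alpha,\beta}$ contains a factor $\partial^\sigma g$ with $|\sigma| \ge m-\frac{d}{p}$, Lemma \ref{lem:W-decay} no longer provides a pointwise estimate on this factor, and it lives only in a weighted $L^p$ class. Such terms must be treated by combining $L^\infty$ control on the remaining factors of $P_{\alpha,\beta}$ with $L^p$ control on the rogue high-derivative factor through H\"older's inequality, while controlling $(\partial^\beta f)\circ\varphi$ in the appropriate weighted space via change of variables. The condition $\delta+\frac{d}{p}>-1$ supplies the small margin that lets the weight bookkeeping close.
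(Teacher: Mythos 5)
Your overall strategy (locally uniform boundedness of $R_\varphi$ on $W^{k,p}_\gamma$, followed by a telescoping decomposition and density of $C^\infty_c$) is the same as the paper's, and your telescoping decomposition, after rearrangement, is essentially identical to the one in the paper's proof. Your treatment of the smooth term $\psi\circ\varphi-\psi\circ\varphi_0$ is genuinely different: the paper invokes a quantitative Lipschitz estimate from \cite[Lemma A.6]{McOwenTopalov4} of the form $\|\tilde f\circ\varphi-\tilde f\circ\varphi_0\|_{W^{k,p}_\gamma}\le C\|\tilde f\|_{W^{m+1,p}_{\gamma-1}}\|\varphi-\varphi_0\|_{W^{m,p}_\delta}$, whereas you pass through Theorem \ref{th:composition} with $r=0$ (giving convergence in $W^{m,p}_\delta$) and then use the uniform compact support of $\psi\circ\varphi$ to convert $W^{m,p}_\delta$-convergence to $W^{k,p}_\gamma$-convergence. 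Your route is slightly more self-contained in that it avoids the Lipschitz constant depending on $\|\tilde f\|_{W^{m+1,p}_{\gamma-1}}$, but it buys this at the cost of the compactness observation: that $\varphi^{-1}(\supp\psi)$ lies in a compact set uniformly for $\varphi$ near $\varphi_0$. This holds because $\delta+\frac{d}{p}>-1$ forces $|g(x)|\le C\|g\|_{W^{m,p}_\delta}\langle x\rangle^{1-\epsilon}$ with $\epsilon:=1+\delta+\frac{d}{p}>0$, but the uniformity in $\varphi$ should be said explicitly rather than left implicit.

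The real gap is the boundedness step. The paper does not reprove the estimate $\|f\circ\varphi\|_{W^{k,p}_\gamma}\le C\|f\|_{W^{k,p}_\gamma}$ locally uniformly in $\varphi$; it cites \cite[Lemma A.4]{McOwenTopalov4}. You attempt to rederive it via Fa\`a di Bruno, and you yourself flag the obstacle, but you do not close it. The problematic terms are those in which a high-order derivative $\partial^\sigma g$ with $|\sigma|$ near $m$ appears \emph{and} the accompanying factor $(\partial^\beta f)\circ\varphi$ with $2\le|\beta|\le|\alpha|-|\sigma|+1$ is not pointwise bounded (which happens whenever $k-|\beta|\le\frac{d}{p}$, e.g.\ already for $|\beta|=2$ when $1+\frac{d}{p}<m\le 2+\frac{d}{p}$). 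In that regime you have two factors living only in weighted $L^p$-classes and H\"older alone does not suffice; one needs a Moser--Gagliardo--Nirenberg type interpolation to trade derivatives between the two factors, which you do not set up, and it is not obvious the budget closes with only $m>1+\frac{d}{p}$. Since this is exactly the content of \cite[Lemma A.4]{McOwenTopalov4}, the cleanest fix is to cite it rather than reprove it; if you insist on a self-contained argument, the Fa\`a di Bruno bookkeeping needs a genuine additional idea that the proposal does not currently supply.
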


\begin{proof}[Proof of Lemma \ref{lem:continuity_composition_general}]
We follow the arguments in the proof of \cite[Lemma 2.7]{IKT}.
Assume that $1<p<\infty$, $\delta+\frac{d}{p}>-1$, $m>1+\frac{d}{p}$, and $\gamma\in\R$.
Then, for any $0\le k\le m$ the composition map \eqref{em:continuity_composition_general} is well defined by
\cite[Lemma A.4]{McOwenTopalov4}. Let us prove that it is continuous.
Take $(\varphi_0,f_0)\in\D^{m,p}_\delta\times W^{k,p}_\delta$ and $\epsilon>0$.
For any $(\varphi,f)\in\D^{m,p}_\delta\times W^{k,p}_\gamma$ we have
\begin{equation}\label{eq:continuity1}
\big\|f\circ\varphi-f_0\circ\varphi_0\big\|_{W^{k,p}_\gamma}\le
\big\|f\circ\varphi-f\circ\varphi_0\big\|_{W^{k,p}_\gamma}+
\big\|f\circ\varphi_0-f_0\circ\varphi_0\big\|_{W^{k,p}_\gamma}.
\end{equation}
Similarly, for any $\tilde{f}\in C^\infty_c$ we have
\begin{equation}\label{eq:continuity2}
\big\|f\circ\varphi-f\circ\varphi_0\big\|_{W^{k,p}_\gamma}\le
\big\|f\circ\varphi-\tilde{f}\circ\varphi\big\|_{W^{k,p}_\gamma}+
\big\|\tilde{f}\circ\varphi-\tilde{f}\circ\varphi_0\big\|_{W^{k,p}_\gamma}+
\big\|\tilde{f}\circ\varphi_0-f\circ\varphi_0\big\|_{W^{k,p}_\gamma}.
\end{equation}
By \cite[Lemma A.4]{McOwenTopalov4} there exists an open neighborhood $U(\varphi_0)$ of $\varphi_0$ in
$\D^{m,p}_\delta$ and a constant $C>0$ such that $\|w\circ\varphi\|_{W^{k,p}_\gamma}\le C\,\|w\|_{W^{k,p}_\gamma}$
for any $\varphi\in U(\varphi_0)$ and $w\in W^{k,p}_\gamma$.
This implies that for $\varphi\in U(\varphi_0)$ we can choose $f\in W^{m,p}_\gamma$ and
$\tilde{f}\in C^\infty_c$ such that 
\begin{equation}\label{eq:continuity3}
\big\|f\circ\varphi_0-f_0\circ\varphi_0\big\|_{W^{k,p}_\gamma}\le\epsilon/4,\quad
\big\|f\circ\varphi-\tilde{f}\circ\varphi\big\|_{W^{k,p}_\gamma}\le\epsilon/4,\quad
\big\|\tilde{f}\circ\varphi_0-f\circ\varphi_0\big\|_{W^{k,p}_\gamma}\le\epsilon/4\,.
\end{equation}
In view of \cite[Lemma A.6]{McOwenTopalov4}, there exists a constant $C>0$ such that
\begin{equation*}
\big\|\tilde{f}\circ\varphi-\tilde{f}\circ\varphi_0\big\|_{W^{m,p}_\gamma}\le
C\,\|\tilde{f}\|_{W^{m+1,p}_{\gamma-1}}\|\varphi-\varphi_0\|_{W^{m,p}_\gamma}
\end{equation*}
for any $\varphi\in U(\varphi_0)$. This implies that we can choose $\varphi\in U(\varphi_0)$ such that
\begin{equation}\label{eq:continuity4}
\big\|\tilde{f}\circ\varphi-\tilde{f}\circ\varphi_0\big\|_{W^{m,p}_\gamma}\le\epsilon/4.
\end{equation}
The claimed continuity then follows from \eqref{eq:continuity1}, \eqref{eq:continuity2}, 
\eqref{eq:continuity3}, and \eqref{eq:continuity4}.
\end{proof}

Assume that $1<p<\infty$, $\delta+\frac{d}{p}>-1$, and $\gamma\in\R$. 
Lemma \ref{lem:continuity_composition_general} allows us to define
for any $m>1+\frac{d}{p}$ and $0\le k\le m-1$ the map
\begin{equation}\label{eq:gradient}
\D^{m,p}_\delta\times W^{k+1,p}_\gamma\to W^{k,p}_{\gamma+1},\quad
(\varphi,f)\mapsto R_\varphi\circ\nabla\circ R_{\varphi^{-1}},
\end{equation}
as well as the map
\begin{equation}\label{eq:Laplacian}
\D^{m,p}_\delta\times W^{k+2,p}_\gamma\to W^{k,p}_{\gamma+2},\quad
(\varphi,f)\mapsto R_\varphi\circ\Delta\circ R_{\varphi^{-1}},
\end{equation}
for any $m>2+\frac{d}{p}$ and $0\le k\le m-2$.
We have

\begin{Lem}\label{lem:conjugate_operators}
The maps \eqref{eq:gradient} and \eqref{eq:Laplacian} are analytic.
\end{Lem}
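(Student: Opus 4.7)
The plan is to reduce the analyticity of both conjugated operators to (i) the analyticity of the map $\varphi\mapsto(\dd\varphi)^{-1}$ as a matrix-valued function in a suitable unitized Banach algebra, and (ii) standard multiplication estimates in weighted Sobolev spaces supplied by Lemma~\ref{lem:W-multiplication}.

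First I would derive explicit formulas for the two conjugated operators using the chain rule. Setting $\psi=\varphi^{-1}$ and differentiating $f\circ\psi$, one obtains $\nabla(f\circ\psi)=(\dd\psi)^{T}(\nabla f)\circ\psi$; composing with $\varphi$ and using $(\dd\psi)\circ\varphi=(\dd\varphi)^{-1}$ gives
\[
R_\varphi\circ\nabla\circ R_{\varphi^{-1}}f=\bigl((\dd\varphi)^{-1}\bigr)^{\!T}\nabla f.
\]
Differentiating once more and expressing $(\partial_{kk}\psi^i)\circ\varphi$ algebraically from the identity $(\dd\psi)\circ\varphi\cdot\dd\varphi=\Id$ differentiated once, one gets
\[
R_\varphi\circ\Delta\circ R_{\varphi^{-1}}f=\sum_{i,j}a^{ij}(\varphi)\,\partial_{ij}f+\sum_i b^i(\varphi)\,\partial_i f,
\]
where $a^{ij}(\varphi)$ are the entries of $(\dd\varphi)^{-1}\bigl((\dd\varphi)^{-1}\bigr)^{\!T}$ and $b^i(\varphi)$ is a polynomial expression in the entries of $(\dd\varphi)^{-1}$ and $\dd^2\varphi$. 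Thus each of \eqref{eq:gradient} and \eqref{eq:Laplacian} is the composition of linear (hence analytic) partial-derivative maps in $f$ with a multiplication by coefficients depending only on $\varphi$.

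The main point is the analyticity of $\varphi\mapsto(\dd\varphi)^{-1}$. I would introduce the unitization $\widetilde W:=\C\cdot 1\oplus W^{m-1,p}_{\delta+1}$ with norm $\|c\cdot 1+g\|_{\widetilde W}:=|c|+\|g\|_{W^{m-1,p}_{\delta+1}}$; since $W^{m-1,p}_{\delta+1}$ is a Banach algebra by Lemma~\ref{lem:W-multiplication} (thanks to $m-1>\tfrac{d}{p}$ and $(\delta+1)+\tfrac{d}{p}\ge 0$), $\widetilde W$ is a unital commutative Banach algebra and $M_d(\widetilde W)$ is a (non-commutative) unital Banach algebra. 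The affine assignment $\varphi=\id+f\mapsto\dd\varphi=\Id+\dd f$ maps $\D^{m,p}_\delta$ into $M_d(\widetilde W)$. For each $\varphi\in\D^{m,p}_\delta$ the element $\det(\dd\varphi)\in\widetilde W$ has constant part $1$ and, by Lemma~\ref{lem:W-decay}, is continuous with $\det(\dd\varphi)(x)\to 1$ at infinity; combined with $\det(\dd\varphi)>0$ this forces $\det(\dd\varphi)\ge\epsilon>0$ globally on $\R^d$. A Fa\`a~di~Bruno--type estimate combined with the Banach algebra property of $W^{m-1,p}_{\delta+1}$ shows that $h\mapsto 1/h$ is analytic on the open set $\{h\in\widetilde W:\inf_{\R^d}h>0\}\subseteq\widetilde W$; Cramer's rule then yields
\[
(\dd\varphi)^{-1}=\frac{\mathrm{adj}(\dd\varphi)}{\det(\dd\varphi)}\in M_d(\widetilde W),
\]
analytically in $\varphi\in\D^{m,p}_\delta$, and since the constant part of $(\dd\varphi)^{-1}$ is $\Id$ we obtain analyticity of $\varphi\mapsto(\dd\varphi)^{-1}-\Id$ from $\D^{m,p}_\delta$ into $M_d\bigl(W^{m-1,p}_{\delta+1}\bigr)$.

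It remains to verify boundedness of the bilinear multiplications that arise. For the gradient one needs $W^{m-1,p}_{\delta+1}\times W^{k,p}_{\gamma+1}\to W^{k,p}_{\gamma+1}$ for $0\le k\le m-1$; for the Laplacian one also needs $W^{m-1,p}_{\delta+1}\times W^{k,p}_{\gamma+2}\to W^{k,p}_{\gamma+2}$ and $W^{m-2,p}_{\delta+2}\times W^{k+1,p}_{\gamma+1}\to W^{k,p}_{\gamma+2}$ for $0\le k\le m-2$. All three follow by direct application of Lemma~\ref{lem:W-multiplication}, the weight count closing precisely because $\delta+\tfrac{d}{p}>-1$; the only mildly delicate case is $k=m-2$ in the last product, which requires exchanging the roles of the two factors to satisfy the regularity hypothesis. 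Together with the linearity (hence analyticity) of $\varphi\mapsto\dd^2\varphi$ from $W^{m,p}_\delta$ into $W^{m-2,p}_{\delta+2}$ and the result of the previous paragraph, this gives the analyticity of both \eqref{eq:gradient} and \eqref{eq:Laplacian}. The hard part of the argument is concentrated in the analyticity of inversion in $\widetilde W$ in the previous paragraph: once one has the Fa\`a~di~Bruno weighted estimate for $\partial^\alpha(1/h)$ in terms of $\partial^\beta h$ and the pointwise lower bound on $h$, the analytic dependence is local and follows from Neumann series in $\widetilde W$.
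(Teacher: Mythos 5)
Your proof is correct in substance and detail, and fills in an argument the paper itself omits: the paper's ``proof'' of this Lemma consists entirely of the remark that it is ``identical with the proof of [Lemma~2.1]{McOwenTopalov4},'' so there is no internal argument to compare against. The chain-rule reduction $R_\varphi\circ\nabla\circ R_{\varphi^{-1}}f=((\dd\varphi)^{-1})^{T}\nabla f$ (obtained from $(\dd\psi)\circ\varphi=(\dd\varphi)^{-1}$ with $\psi=\varphi^{-1}$), the representation of the conjugated Laplacian as $\sum a^{ij}(\varphi)\partial_{ij}f+\sum b^i(\varphi)\partial_if$, the use of Cramer's rule together with analytic inversion in the unitization $\C\cdot 1\oplus W^{m-1,p}_{\delta+1}$ of the Banach algebra $W^{m-1,p}_{\delta+1}$, and the bilinear estimates via Lemma~\ref{lem:W-multiplication} are exactly the ingredients this kind of statement calls for; your weight bookkeeping (using $\delta+1+\tfrac{d}{p}>0$ so that products land in the claimed target spaces, and swapping the roles of the factors at $k=m-2$ so that the regularity hypothesis $0\le k\le l\le m$ is met) is accurate. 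Two small points are worth recording explicitly if this were to be written up. First, invertibility of $\det(\dd\varphi)$ in the algebra, not just pointwise positivity, is what is really needed; you supply this via the Fa\`a~di~Bruno estimate for $1/(1+g)$ together with the uniform lower bound coming from $\det(\dd\varphi)\to 1$ at infinity (Lemma~\ref{lem:W-decay}) and continuity, after which analyticity of inversion is automatic from Neumann series in a unital Banach algebra. Second, you should note that $b^i(\varphi)$ is obtained by differentiating $(\dd\psi)\circ\varphi\cdot\dd\varphi=\Id$, yielding a polynomial in entries of $(\dd\varphi)^{-1}$ and of $\dd^2\varphi\in W^{m-2,p}_{\delta+2}$, and that the required product estimates again close by Lemma~\ref{lem:W-multiplication}. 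Joint analyticity then follows since each map is linear (hence analytic) in the $f$-variable and analytic in $\varphi$ through the coefficients, so the composition is analytic in the pair.
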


\noindent The proof of this Lemma is identical with the proof of \cite[Lemma 2.1]{McOwenTopalov4}.

\medskip\medskip

Let us now discuss the {\em complex interpolation} in the scale of $W$-spaces.
Let $X_0$ and $X_1$ be a pair of Banach spaces over $\C$ such that $X_1\subseteq X_0$ is bounded.
For $0\le\theta\le 1$ consider the complex interpolation space $X_\theta=[X_0,X_1]_\theta$ (see, e.g., \cite[IX, \S 4]{RS},
\cite{KSST}, \cite[\S 3.1]{Triebel}). Let $1<p<\infty$ and $\delta\in\R$.
It follows from \cite[Theorem 2, Definition 2\,(d)]{Triebel} that for $m\in\Z_{\ge 0}$ we have that
\begin{equation}\label{eq:W=h}
W^{m,p}_{\delta,\C}= h^m_{p,\mu}\quad\text{with}\quad\mu:=(\delta-m)p
\end{equation}
where the Banach space $h^s_{p,\mu}$ is defined for all $s\ge 0$ real (see \cite[Definition 2\,(a)]{Triebel}).
This allows us to include the scale of $W$-spaces into a continuous scale of Banach spaces with real regularity exponents.
As a consequence, we have the following interpolation result.

\begin{Lem}\label{lem:interpolation}
Assume that $1<p<\infty$, $m, m_0, m_1\in\Z_{\ge 0}$, and $\delta, \delta_0, \delta_1\in\R$. Then,
we have:
\begin{itemize}
\item[(i)] For $m:=(1-\theta)m_0+\theta m_1$ with $0\le\theta\le 1$ we have that 
$W^{m,p}_{\delta,\C}=\big[W^{m_0,p}_{\delta,\C},W^{m_1,p}_{\delta,\C}\big]_\theta$.
\item[(ii)] For $\delta:=(1-\theta)\delta_0+\theta\delta_1$ with $0\le\theta\le 1$ we have that 
$W^{m,p}_{\delta,\C}=\big[W^{m,p}_{\delta_0,\C},W^{m,p}_{\delta_1,\C}\big]_\theta$.
\end{itemize}
\end{Lem}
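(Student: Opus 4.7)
The plan is to reduce both statements to a single complex interpolation result for Triebel's scale $h^s_{p,\mu}$ and then carry out the arithmetic of the interpolation parameters. The starting point is the identification \eqref{eq:W=h}, which embeds the discrete scale $\{W^{m,p}_{\delta,\C}\}_{m\in\Z_{\ge 0}}$ into the continuous two-parameter family $\{h^s_{p,\mu}\}_{s\ge 0,\,\mu\in\R}$ via the correspondence $(m,\delta)\leftrightarrow(s,\mu)=(m,(\delta-m)p)$.

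The key input I would invoke is Triebel's complex interpolation theorem for weighted Besov/Triebel--Lizorkin spaces (see \cite[Theorem 2]{Triebel}), which asserts that for any $s_0,s_1\ge 0$, $\mu_0,\mu_1\in\R$, and $0\le\theta\le 1$,
\[
\bigl[h^{s_0}_{p,\mu_0},h^{s_1}_{p,\mu_1}\bigr]_\theta=h^{s_\theta}_{p,\mu_\theta},
\qquad s_\theta:=(1-\theta)s_0+\theta s_1,\quad\mu_\theta:=(1-\theta)\mu_0+\theta\mu_1.
\]
Granted this, both items of the lemma become one-line computations. For $(i)$, set $s_j=m_j$ and $\mu_j=(\delta-m_j)p$, $j=0,1$, so that $s_\theta=m$ and
\[
\mu_\theta=(1-\theta)(\delta-m_0)p+\theta(\delta-m_1)p=\bigl(\delta-m\bigr)p,
\]
hence $[W^{m_0,p}_{\delta,\C},W^{m_1,p}_{\delta,\C}]_\theta=h^m_{p,(\delta-m)p}=W^{m,p}_{\delta,\C}$ by \eqref{eq:W=h}. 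For $(ii)$, the regularity index is fixed: set $s_j=m$ and $\mu_j=(\delta_j-m)p$; then $s_\theta=m$ and
\[
\mu_\theta=(1-\theta)(\delta_0-m)p+\theta(\delta_1-m)p=\bigl(\delta-m\bigr)p,
\]
so $[W^{m,p}_{\delta_0,\C},W^{m,p}_{\delta_1,\C}]_\theta=h^m_{p,(\delta-m)p}=W^{m,p}_{\delta,\C}$ again by \eqref{eq:W=h}.

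The only potential obstacle is verifying that the form of Triebel's theorem actually produces joint interpolation in both the smoothness index $s$ and the weight index $\mu$, rather than only in one of them with the other held fixed. In Triebel \cite{Triebel} this is treated via the lift operator $\langle x\rangle^{\mu/p}$ together with the standard complex interpolation of Bessel potential spaces, so the combined statement is available. If one wishes to avoid the full strength of that result, item $(ii)$ (fixed $m$, varying weight) follows directly from multiplication by $\langle x\rangle^{\mu/p}$ reducing the question to the unweighted interpolation of $W^{m,p}_{\C}$, while item $(i)$ with $\delta=0$ and $\mu_j=-m_jp$ can be reduced, using the same lift, to the standard Calder\'on interpolation of $L^p$-Sobolev spaces. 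The two reductions together cover the general case, which is why no further work is needed beyond bookkeeping of the parameters.
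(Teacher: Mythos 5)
Your proposal matches the paper's proof essentially verbatim: both rely on the identification $W^{m,p}_{\delta,\C}=h^m_{p,(\delta-m)p}$ from \eqref{eq:W=h} together with Triebel's complex interpolation theorem for the scale $h^s_{p,\mu}$ (jointly in $s$ and $\mu$), followed by the same parameter bookkeeping. The only discrepancy is the citation: the interpolation result is \cite[Theorem 3\,(e)]{Triebel}, not Theorem 2 (which the paper uses only for the identification \eqref{eq:W=h}).
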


\begin{proof}[Proof of Lemma \ref{lem:interpolation}]
The Lemma follows directly from \eqref{eq:W=h} and \cite[Theorem 3\,(e)]{Triebel}.
Let us first prove item (i). Take $m_0, m_1\in\Z_{\ge 0}$, $m_0\le m_1$ and assume that
$m=(1-\theta)m_0+\theta m_1\in\Z_{\ge 0}$ for some $0\le\theta\le 1$.
Then, we obtain from \eqref{eq:W=h} that
\[
\mu=(\delta-m)p=(1-\theta)(\delta-m_0)p+\theta(\delta-m_1)p
=(1-\theta)\mu_0+\theta\mu_1
\]
where we set $\mu_0:=(\delta-m_0)p$ and $\mu_1:=(\delta-m_1)p$.
By \cite[Theorem 3\,(e)]{Triebel} we have that
\[
h^m_{p,\mu}=\big[h^{m_0}_{p,\mu_0},h^{m_1}_{p,\mu_1}\big]_\theta.
\]
Since by \eqref{eq:W=h},
\[
W^{m,p}_{\delta,\C}= h^m_{p,\mu},\quad 
W^{m_0,p}_{\delta,\C}= h^{m_0}_{p,\mu_0},\quad\text{\rm and}\quad
W^{m_1,p}_{\delta,\C}= h^{m_1}_{p,\mu_1},
\]
we then conclude the proof of (i). The proof of (ii) follows by the same arguments.
\end{proof}

The notion of a $C^k$ map along curves is introduced in Section \ref{sec:Lie-Trotter}, 
Definition \ref{def:C^k_along_curves}. The proof of the following Lemma can be done 
in a straightforward way and will be omitted (cf. Remark \ref{rem:C^k_along_curves}).

\begin{Lem}\label{lem:S-smoothness}
Let $\{S(t)\}_{t\ge 0}$ be a strongly continuous semigroup of bounded linear operators 
$S(t)\in\LL(X)$ with a generator $A : D_A\to X$ and a dense domain $D_A\subseteq X$.
Then, the map $S : [0,\infty)\times D_A\to X$, $(t,x)\mapsto S(t)x$, is $C^1$ along curves and
the map $S : [0,\infty)\times D_{A^2}\to X$, $(t,x)\mapsto S(t)x$, where 
$D_{A^2}:=\{f\in D_A\,|\,Af\in D_A\}$, is $C^2$ along curves.\footnote{The Banach spaces $D_A$ and 
$D_{A^2}$ are equipped with the corresponding graph norms.}
The statement above holds with $D_A$ and $D_{A^2}$ replaced by any boundedly embedded
in $D_A$ and $D_{A^2}$ subspaces $\widetilde{D}_A\subseteq D_A$ and 
$\widetilde{D}_{A^2}\subseteq D_{A^2}$.
\end{Lem}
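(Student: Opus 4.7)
The plan is to verify Definition \ref{def:C^k_along_curves} directly, by writing down explicit formulas for all the required directional partial derivatives and then checking joint continuity of each as a map into $X$. The underlying engine is the standard regularity theory of strongly continuous semigroups: on every compact time interval $[0,T]$ one has $\sup_{t\in[0,T]}\|S(t)\|_{\LL(X)}<\infty$ (by Banach--Steinhaus), the orbit $t\mapsto S(t)x$ is continuous for every $x\in X$, and for $x\in D_A$ the orbit is $C^1$ with derivative $AS(t)x=S(t)Ax\in X$. Moreover, $S(t)$ restricts to strongly continuous semigroups on $D_A$ (with generator $A|_{D_{A^2}}$) and on $D_{A^2}$, because $A$ commutes with $S(t)$ on its domain.

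For the $C^1$ statement, the derivatives required by Definition \ref{def:C^k_along_curves} with $|\mu|+|\alpha|\le 1$ reduce to three irreducible cases: $S(t,x)$ itself, $D_1S(t,x)=S(t)Ax$ (well-defined for $x\in D_A$), and $D_2S(t,x)(\xi)=S(t)\xi$ (since $S$ is linear in $x$). For each of these, joint continuity in $(t,x)$ (or $(t,x,\xi)$) follows from the two-step estimate
\[
\big\|S(t')y'-S(t)y\big\|\le\|S(t')\|_{\LL(X)}\,\|y'-y\|+\big\|S(t')y-S(t)y\big\|,
\]
applied with $y=x$, $y=Ax$, or $y=\xi$: the first term is controlled by the local uniform boundedness of $\|S(t')\|_{\LL(X)}$ together with the fact that $A:D_A\to X$ is bounded in the graph norm, while the second term goes to zero by strong continuity. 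Higher-order multi-indices $(\mu_1,\alpha_1,\mu_2,\alpha_2,\dots)$ with $|\mu|+|\alpha|\le 1$ are not actually higher order, because any zero entry is a trivial application of the identity; the genuine non-reducible cases are only the three listed.

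For the $C^2$ statement on $D_{A^2}$, the new derivatives are $D_1^2S(t,x)=S(t)A^2x$ (for $x\in D_{A^2}$), the mixed derivatives $D_1D_2S(t,x)(\xi)=D_2D_1S(t,x)(\xi)=S(t)A\xi$ (for directions $\xi\in D_{A^2}\subseteq D_A$), and $D_2^2S=0$. Existence of each follows by applying the $D_A$-semigroup result one level up; continuity follows by the same two-step estimate, using now that $A:D_{A^2}\to D_A$ and $A:D_A\to X$ are bounded in the relevant graph norms. All alternating compositions allowed by the definition collapse to one of these expressions because $A$ and $S(t)$ commute on their domains. Finally, the extension to arbitrary boundedly embedded $\widetilde{D}_A\subseteq D_A$ and $\widetilde{D}_{A^2}\subseteq D_{A^2}$ is automatic: restricting a jointly continuous map along a bounded inclusion preserves joint continuity.

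I expect no serious obstacle. The only mild subtlety is organizational — making sure that the somewhat unusual combinatorics of Definition \ref{def:C^k_along_curves} (alternating blocks of $t$- and $x$-derivatives) really does collapse to the handful of genuinely distinct expressions above — and this follows purely from the linearity of $S(t)$ in $x$ and the commutation of $A$ with $S(t)$. The rest is the standard semigroup bookkeeping already packaged in Pazy's book or \cite{McOwenTopalov5}, so the proof will amount to citing strong continuity, local boundedness, and the closedness of $A$.
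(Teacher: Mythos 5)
Your proposal is correct. The paper omits the proof, stating only that it ``can be done in a straightforward way'' with reference to Remark~\ref{rem:C^k_along_curves}, and your argument is precisely the intended one: exploit linearity of $S(t)$ in $x$ and commutation of $A$ with $S(t)$ to reduce the derivatives allowed by Definition~\ref{def:C^k_along_curves} to the finite list $S(t)x$, $S(t)Ax$, $S(t)\xi$ (and additionally $S(t)A^2x$, $S(t)A\xi$, $0$ for the $D_{A^2}$ case), and then obtain joint continuity of each from the two-step estimate combining local uniform boundedness of $\|S(t)\|_{\LL(X)}$, strong continuity, and graph-norm boundedness of $A$, with the bounded-embedding claim following by restriction.
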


The following Lemma shows that the norm in $W^{m,p}_\delta$, $m\ge 0$, $\delta\in\R$, $1<p<\infty$,
satisfies the property \eqref{eq:differentiable_norm} with $\lambda=p$.

\begin{Lem}\label{lem:L^p-norm}
Assume that $1<p<\infty$. Then the map $F : L^p\to\R$, $f\mapsto\|f\|_{L^p}^p=\int_{\R^d}|f(x)|^p\,dx$, 
is $C^1$. Moreover, we have that $(\dd_f F)(\xi)=p\int_{\R^d}|f(x)|^{p-1}\sign(f(x))\,\xi(x)\,dx$ and
$\|\dd_f F\|_{(L^p)^*}\le p\|f\|_{L^p}^{p/q}$.
\end{Lem}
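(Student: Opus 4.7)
The plan is to first compute the Gateaux derivative pointwise and then bootstrap to Fr\'echet differentiability via continuity of the derivative map. Fix $f, \xi \in L^p$ and note that the scalar function $t \mapsto |t|^p$ is $C^1$ on $\R$ with derivative $p|t|^{p-1}\sign(t)$. For $s \ne 0$ the mean value theorem yields, pointwise for a.e.\ $x$,
\[
\frac{|f(x)+s\xi(x)|^p - |f(x)|^p}{s} = p\,|f(x) + \theta_s(x)\,s\xi(x)|^{p-1}\sign\bigl(f(x)+\theta_s(x)\,s\xi(x)\bigr)\,\xi(x)
\]
for some $\theta_s(x) \in (0,1)$. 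As $s \to 0$ this converges a.e.\ to $p|f|^{p-1}\sign(f)\,\xi$. For $|s| \le 1$ I would bound the difference quotient in absolute value by $C\,(|f|+|\xi|)^{p-1}|\xi|$ and then by $C'(|f|^{p-1}|\xi| + |\xi|^p)$, both integrable by H\"older (since $|f|^{p-1} \in L^q$ with $1/p + 1/q = 1$). Dominated convergence then gives
\[
\lim_{s\to 0}\frac{F(f+s\xi)-F(f)}{s}=p\int_{\R^d}|f(x)|^{p-1}\sign(f(x))\,\xi(x)\,dx=:L_f(\xi).
\]

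Next I would verify that $L_f \in (L^p)^*$ and obtain the stated operator-norm bound. Using $(p-1)q = p$, H\"older's inequality gives
\[
|L_f(\xi)| \le p\,\bigl\||f|^{p-1}\bigr\|_{L^q}\,\|\xi\|_{L^p} = p\,\|f\|_{L^p}^{p-1}\,\|\xi\|_{L^p} = p\,\|f\|_{L^p}^{p/q}\,\|\xi\|_{L^p},
\]
which yields $\|L_f\|_{(L^p)^*} \le p\|f\|_{L^p}^{p/q}$.

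The main step, and the one I regard as the principal obstacle, is to upgrade the Gateaux derivative to a continuous Fr\'echet derivative. For this I would show that the Nemytskii operator $N : L^p \to L^q$, $f \mapsto |f|^{p-1}\sign(f)$, is continuous. Because $(p-1)q = p$, we have $\|N(f)\|_{L^q}^q = \|f\|_{L^p}^p$, so $N$ maps $L^p$ into $L^q$ boundedly. For continuity, given $f_n \to f$ in $L^p$ I would use the standard subsequence trick: every subsequence admits a further subsequence $f_{n_k} \to f$ a.e.\ that is dominated by some $g \in L^p$; then $N(f_{n_k}) \to N(f)$ a.e.\ and $|N(f_{n_k})-N(f)|^q \le C(|f_{n_k}|^p + |f|^p) \le C(g^p + |f|^p) \in L^1$, so dominated convergence gives $N(f_{n_k}) \to N(f)$ in $L^q$, and the whole sequence converges by the standard subsequence argument. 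Hence $f \mapsto L_f$ is continuous from $L^p$ to $(L^p)^*$.

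Finally, to conclude that $F$ is of class $C^1$ with Fr\'echet derivative $L_f$, I would invoke the standard principle that a Gateaux-differentiable map with continuous derivative is Fr\'echet differentiable. Concretely, from the scalar identity $|f+\xi|^p - |f|^p = \int_0^1 \frac{d}{dt}|f + t\xi|^p \,dt$ integrated and then integrated over $\R^d$ (the use of Fubini being justified by the domination above), one obtains $F(f+\xi) - F(f) = \int_0^1 L_{f+t\xi}(\xi)\,dt$, whence
\[
\bigl|F(f+\xi)-F(f)-L_f(\xi)\bigr| \le \|\xi\|_{L^p}\sup_{t\in[0,1]}\|L_{f+t\xi}-L_f\|_{(L^p)^*} = o(\|\xi\|_{L^p})
\]
by continuity of $f \mapsto L_f$. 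This completes the proof.
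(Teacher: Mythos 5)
Your proof is correct, and it is worth noting that the paper does not actually prove the $C^1$ claim: it cites \cite[Proposition 5]{BF} for the differentiability of $F$ and only remarks that the operator-norm bound follows from H\"older's inequality. You instead give a self-contained argument along the standard route: Gateaux differentiability from the scalar mean value theorem plus dominated convergence (your bound $(|f|+|\xi|)^{p-1}|\xi|\le C(|f|^{p-1}|\xi|+|\xi|^p)$ is valid in both regimes $1<p<2$ and $p\ge 2$, and the a.e.\ pointwise limit is fine even where $f(x)=0$ since $|t|^{p-1}\sign(t)$ is continuous for $p>1$); then continuity of the Nemytskii operator $N:L^p\to L^q$, $f\mapsto |f|^{p-1}\sign(f)$, via the subsequence/domination trick using $(p-1)q=p$; and finally the upgrade to Fr\'echet $C^1$ via the fundamental theorem of calculus and Fubini, with the integrability hypothesis for Fubini justified by the same domination. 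The H\"older estimate you give for $\|L_f\|_{(L^p)^*}$ is exactly the paper's brief remark. In short, your proposal replaces the paper's citation with a complete, elementary proof; nothing is missing.
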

\noindent For the proof see, e.g., \cite[Proposition 5]{BF}. The bound on the norm of $\dd_f F$ follows from 
H\"older's inequality.

We will also need the following stronger version of \cite[Proposition 1.2]{McOwenTopalov4}
on the independence of the interval of existence of the solutions of the Euler equation on the regularity
of the initial data in the weighted Sobolev spaces.

\begin{Prop}\label{prop:T-independent_on_regularity}
Assume that $m_0>1+\frac{d}{p}$ and $-1/2<\delta_0+\frac{d}{p}<d+1$ and let 
$u\in C\big([0,T),\aW^{m_0,p}_{\delta_0}\big)\cap C^1\big([0,T),\aW^{m_0-1,p}_{\delta_0}\big)$
be a solution of the Euler equation with $u_0\in W^{m_0,p}_{\delta_0}$.
Then, if $u_0\in\aW^{m,p}_\delta$ for $m\ge m_0$ and $\delta\ge\delta_0$ with $\delta+\frac{d}{p}<d+1$
we have that $u\in C\big([0,T),\aW^{m,p}_\delta\big)$.
If $m>3+\frac{d}{p}$ then 
$u\in C\big([0,T),\aW^{m,p}_\delta\big)\cap C^1\big([0,T),\aW^{m-1,p}_\delta\big)$.
\end{Prop}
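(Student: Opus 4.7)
The plan is to combine local well-posedness of the Euler equation in the stronger space $\aW^{m,p}_\delta$ with a persistence-of-regularity estimate controlled by the lower norm. By \cite[Theorem 1.1]{McOwenTopalov5} applied with regularity $m$ and weight $\delta$ (both in the admissible range $m > 1+d/p$, $-1/2 < \delta + d/p < d+1$), the initial value problem with $u_0 \in \aW^{m,p}_\delta$ admits a maximal solution $\tilde u \in C([0,T^*),\aW^{m,p}_\delta)$ for some $T^* \in (0,+\infty]$. Since $\aW^{m,p}_\delta \hookrightarrow \aW^{m_0,p}_{\delta_0}$ when $m \ge m_0$ and $\delta \ge \delta_0$, $\tilde u$ is also a solution in the weaker space, and uniqueness there forces $\tilde u \equiv u$ on $[0,\min(T,T^*))$. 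It therefore suffices to show $T^* \ge T$; by the standard ODE blow-up alternative this reduces to an a priori bound on $\|\tilde u(t)\|_{W^{m,p}_\delta}$ in terms of the already-controlled $\sup_{s\le t}\|u(s)\|_{W^{m_0,p}_{\delta_0}}$.

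To derive such a bound I would apply $\partial^\alpha$ with $|\alpha|\le m$ to the Eulerian form $u_t + u\cdot\nabla u = -\nabla\p$ and pair with $\langle x\rangle^{p(\delta+|\alpha|)}|\partial^\alpha u|^{p-2}\partial^\alpha u$. Three groups of terms arise: (i) the transport commutator $[\partial^\alpha, u\cdot\nabla]u$, handled by a weighted Moser/Kato--Ponce-type inequality and bounded by $\|\nabla u\|_{L^\infty}\|u\|_{W^{m,p}_\delta}$; (ii) the commutator of the weight $\langle x\rangle^{\delta+|\alpha|}$ with the transport, whose principal piece $(\delta+|\alpha|)(u\cdot x)\langle x\rangle^{\delta+|\alpha|-2}\partial^\alpha u$ is bounded by $\|u/\langle x\rangle\|_{L^\infty}\|u\|_{W^{m,p}_\delta}$; (iii) the pressure contribution $\partial^\alpha\nabla\p$, with $\nabla\p = (\nabla\circ\Delta^{-1})Q(u)$ and $Q(u) = \tr([\dd u]^2)$, estimated using the mapping properties of $\nabla\circ\Delta^{-1}$ on weighted Sobolev spaces from \cite[Section 3]{McOwenTopalov4}. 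Since the embedding $W^{m_0,p}_{\delta_0}\hookrightarrow C^1$ with the decay \eqref{eq:decay} of Lemma \ref{lem:W-decay} bounds both $\|\nabla u\|_{L^\infty}$ and $\|u/\langle x\rangle\|_{L^\infty}$ by $\|u\|_{W^{m_0,p}_{\delta_0}}$, we arrive at
\begin{equation*}
\frac{d}{dt}\|u(t)\|_{W^{m,p}_\delta}^p \le C\big(\|u(t)\|_{W^{m_0,p}_{\delta_0}}\big)\,\|u(t)\|_{W^{m,p}_\delta}^p,
\end{equation*}
and Gronwall closes the estimate on any $[0,T'] \subset [0,T)$.

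The hard part is controlling the pressure term uniformly for $\delta$ across the whole admissible range $-1/2 < \delta+d/p < d+1$, which contains the critical integer thresholds where the standard mapping properties of $\nabla\circ\Delta^{-1}$ on weighted Sobolev spaces degenerate (nontrivial kernel or cokernel appear) and have to be supplemented by splitting arguments with harmonic polynomials and asymptotic terms. The way around is to exploit $\Div u = 0$: then $Q(u) = \partial_j\partial_k(u_j u_k)$, so $\nabla\circ\Delta^{-1}Q(u)$ reduces to Riesz-type second-order operators acting on $u\otimes u$, and combined with the Banach algebra Lemma \ref{lem:W-multiplication} this gives $\nabla\p \in W^{m,p}_{\delta+1}$ directly, sidestepping the problematic splittings. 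Finally, for the $C^1$ statement when $m > 3+d/p$: once $u \in C([0,T),\aW^{m,p}_\delta)$ is in hand, Lemma \ref{lem:W-multiplication} yields $u\cdot\nabla u \in C([0,T), W^{m-1,p}_\delta)$ (using $m-1 > 2+d/p$) and the pressure bound gives $\nabla\p \in C([0,T), W^{m-1,p}_\delta)$, so $u_t = -u\cdot\nabla u - \nabla\p \in C([0,T), \aW^{m-1,p}_\delta)$, yielding the required $C^1$ regularity.
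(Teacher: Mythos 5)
Your proposal takes a genuinely different route from the paper. The paper bootstraps via the vorticity transport law $\omega(t)=\psi(t)^*\omega_0$ (pullback by the inverse flow), then recovers $u$ from $\omega$ through the Biot--Savart identity $u_l=\Delta^{-1}\big(\sum_j\partial_j\omega_{jl}\big)$ and the mapping properties of $\Delta^{-1}$ on weighted spaces (citing \cite[Proposition B.1]{McOwenTopalov4}), incrementing regularity one step at a time. This is a one-shot argument, linear in $\omega_0$, with no need for a Gronwall closure. You instead propose local well-posedness in the stronger space plus a tame a priori $L^p$ energy estimate. Both are legitimate paths in principle, and your reduction to the blow-up alternative and the treatment of the transport and weight commutators in (i)--(ii) are sound.

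However, there is a genuine gap in your treatment of the pressure. You claim that writing $Q(u)=\partial_j\partial_k(u_ju_k)$ (valid since $\Div u=0$) reduces $\nabla\Delta^{-1}Q(u)$ to ``Riesz-type second-order operators acting on $u\otimes u$'' and thereby sidesteps the cokernel/splitting difficulties. This does not work. Riesz transforms are bounded on $L^p\big(\langle x\rangle^{p\gamma}\big)$ only when $\langle x\rangle^{p\gamma}$ is an $A_p$ weight, i.e.\ for $-\tfrac{d}{p}<\gamma<\tfrac{d}{q}$. In the scale $W^{m,p}_\gamma$ the weight exponent at the top derivative is $\gamma+m$, which leaves the Muckenhoupt range as soon as $m$ is moderately large. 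Concretely, if $v\in C^\infty_c$ with $\int v\ne 0$, then $R_jR_k v$ decays only like $|x|^{-d}$, so $\langle x\rangle^{\gamma}R_jR_k v\notin L^p$ for $\gamma+\tfrac{d}{p}>d$: the fast decay of $u\otimes u$ is not preserved, which is exactly the source of the asymptotic terms in $\Delta^{-1}$ on weighted spaces. So the mapping theory of $\Delta^{-1}$ from \cite[Section 3]{McOwenTopalov4}, together with the verification that the asymptotic/polynomial terms vanish in the range $-1/2<\delta+\tfrac{d}{p}<d+1$ and the approximation argument at integer values of $\delta+\tfrac{d}{p}$, is not avoidable; it is precisely the tool the paper invokes through Biot--Savart. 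A secondary inaccuracy: by the Banach algebra property the natural weight is $\nabla\p\in W^{m,p}_{2\delta+\frac{d}{p}+1}$, not $W^{m,p}_{\delta+1}$; the estimate you need for Gronwall is a \emph{tame} bound $\|\nabla\p\|\lesssim C(\|u\|_{W^{m_0,p}_{\delta_0}})\|u\|_{W^{m,p}_\delta}$, which again requires the weighted mapping theory and a careful distribution of derivatives between the high- and low-regularity factors.

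Your final $C^1$ step, once $u\in C\big([0,T),\aW^{m,p}_\delta\big)$ is established, is essentially correct after fixing the weights: $u\cdot\nabla u$ and $\nabla\p$ land in $W^{m-1,p}_{2\delta+\frac{d}{p}+1}\subseteq W^{m-1,p}_\delta$ because $\delta+\tfrac{d}{p}>-1$, giving $u_t\in C\big([0,T),\aW^{m-1,p}_\delta\big)$. The paper instead cites \cite[Theorem 1.2]{McOwenTopalov4} for this step, but your argument for it is fine.
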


\begin{Rem}
If $\delta_0+\frac{d}{p}\ge d+1$ then the statement of Proposition \ref{prop:T-independent_on_regularity}
continues to hold but the solution takes the form in \cite[Theorem 1.2]{McOwenTopalov4} (b).
The last statement of  Proposition \ref{prop:T-independent_on_regularity} holds without the assumption
that $m>3+\frac{d}{p}$. (The requirement $m>3+\frac{d}{p}$ in \cite[Theorem 1.2]{McOwenTopalov4} is
technical and can be replaced by $m>1+\frac{d}{p}$ -- see the discussion on p. 1480 in \cite{McOwenTopalov4}.)
\end{Rem}

\begin{proof}[Proof of Proposition \ref{prop:T-independent_on_regularity}]
Assume that the conditions of the Proposition are satisfied. 
Let us first prove that $u\in C\big([0,T),\aW^{m,p}_\delta\big)$.
Assume that $m=m_0+1$ and $\delta\ge\delta_0$ and let 
$\varphi\in C^1\big([0,T),\D^{m_0,p}_{\delta_0}\big)$ be the solution of
the ordinary differential equation $\dt{\varphi}(t)=u(t)\circ\varphi(t)$, $\varphi|_{t=0}=\id$
(see \cite[Proposition 2.1]{McOwenTopalov4}). 
By the preservation of vorticity we have that for any $t\in[0,T)$,
$\omega(t)=\psi(t)^*\omega_0$ where $\psi(t):=\varphi(t)^{-1}$,
$\omega(t):=\dd(u(t)^\flat)$ is the vorticity 2-form, and $u(t)^\flat$ is the 1-form obtained from
the vector field $u(t)$ by lowering the indexes with the help of the Euclidean metric on $\R^d$.
In coordinates,
\begin{equation}\label{eq:conservation_law}
\omega_{kl}(t)=\sum_{\alpha,\beta=1}^d\omega_{0 \alpha\beta}(t)\circ\psi(t)\,
\frac{\partial\psi_\alpha(t)}{\partial x_k}\frac{\partial\psi_\alpha(t)}{\partial x_l},\quad 1\le k,l\le d,
\end{equation}
where $\psi(t)=\id+f(t)$ with $f\in C\big([0,T),W^{m_0,p}_{\delta_0}\big)$, since $\D^{m_0,p}_{\delta_0}$
is a topological group (\cite[Theorem 2.1]{McOwenTopalov4}).
It then follows from \eqref{eq:conservation_law} and \cite[Theorem 2.1]{McOwenTopalov4} that 
$\omega\in C\big([0,T),W^{m_0,p}_{\delta+1}\big)$.
Hence, we obtain from the Boit-Savart identity 
$u_l=\Delta^{-1}\big(\sum_{j=1}^d\frac{\partial\omega_{jl}}{\partial x_j}\big)$, $1\le l\le d$, 
$\delta+\frac{d}{p}\notin\Z$, and \cite[Proposition B.1]{McOwenTopalov4} that 
$u\in C\big([0,T),\aW^{m_0+1,p}_\delta\big)$.
(The fact that there are no asymptotic terms follows from the assumption $-1/2<\delta+\frac{d}{p}<d+1$.
The case $\delta+\frac{d}{p}\in\Z$ follows by approximation as in the proof of Theorem 1.1 in \cite{Top1}.)
By iterating these arguments inductively, we obtain that $u\in C\big([0,T),\aW^{m_0+1,p}_\delta\big)$
for any $m\ge m_0$. If $m>3+\frac{d}{p}$ then we can apply \cite[Theorem 1.2]{McOwenTopalov4}
to conclude that $u\in C\big([0,T),\aW^{m,p}_\delta\big)\cap C^1\big([0,T),\aW^{m-1,p}_\delta\big)$.
\end{proof}

\end{document}